\theoremstyle{plain}
\newtheorem{Theorem}{Theorem}[section] 
\newtheorem{Proposition}[Theorem]{Proposition}
\newtheorem{Corollary}[Theorem]{Corollary}
\newtheorem{Definition}[Theorem]{Definition}
\newtheorem{Lemma}[Theorem]{Lemma}
\newtheorem{Remark}[Theorem]{Remark}
\newtheorem{Definition/proposition}[Theorem]{Definition/Proposition}
\theoremstyle{Definition}\newtheorem{Theorem*}{Theorem}
\theoremstyle{definition}\newtheorem{Corollary*}[Theorem*]{Corollary}
\theoremstyle{definition}\newtheorem{Proposition*}[Theorem*]{Proposition}
\date{}
\theoremstyle{Definition}
\renewcommand\theequation%
\title{Kato's irrreducibility criterion for Kac-Moody groups over local fields}
\author{Auguste \textsc{Hébert}}
\makeatletter \@addtoreset{figure}{section}\makeatother
\newcommand{\R}{\mathbb{R}}
\newcommand{\A}{\mathbb{A}}
\newcommand{\N}{\mathbb{N}}
\newcommand{\Z}{\mathbb{Z}}
\newcommand{\C}{\mathbb{C}}
\newcommand{\I}{\mathcal{I}}
\newcommand{\T}{\mathcal{T}}
\newcommand{\Id}{\mathrm{Id}}
\newcommand{\supp}{\mathrm{supp}}
\newcommand{\AC}{{^{\mathrm{BL}}\mathcal{H}}}
\newcommand{\ATF}{\AC(T_\C)}
\newcommand{\AF}{\AC_\C}
\newcommand{\ATC}{\AC(T_\C)}
\newcommand{\Hom}{\mathrm{Hom}}
\newcommand{\DC}{\mathcal{D}}
\newcommand{\FC}{\mathcal{F}}
\newcommand{\HC}{\mathcal{H}}
\newcommand{\KC}{\mathcal{K}}
\newcommand{\NC}{\mathcal{N}}
\newcommand{\RC}{\mathcal{R}}
\newcommand{\SC}{\mathcal{S}}
\newcommand{\UC}{\mathcal{U}}
\newcommand{\KCC}{\mathscr{K}}
\newcommand{\JC}{\mathcal{J}}
\newcommand{\HCW}{\mathcal{H}_{W^v,\C}}
\newcommand{\HFW}{{\mathcal{\HC}_{W^v,\FC}}}
\newcommand{\LCC}{\mathscr{L}}
\newcommand{\RCC}{\mathscr{R}}
\newcommand{\SCC}{\mathscr{S}}
\newcommand{\End}{\mathrm{End}}
\newcommand{\Wta}{W_{(\tau)}}
\newcommand{\vb}{\mathbf{v}}
\newcommand{\vbt}{\textbf{v}_{\tilde{\tau}}}
\newcommand{\tKC}{\tilde{\KC}}
\newcommand{\Itg}{I_\tau(\tau,\mathrm{gen},\Wta)}
\newcommand{\LTI}{\mathrm{LT}}
\newcommand{\ord}{\mathrm{ord}}
\newcommand{\htt}{\mathrm{ht}}
\newcommand{\ev}{\mathrm{ev}}
\begin{document}


\maketitle

\begin{abstract}
In 2014, Braverman, Kazhdan, Patnaik and Bardy-Panse, Gaussent and Rousseau associated Iwahori-Hecke algebras to Kac-Moody groups over non-Archimedean local fields. In a previous paper, we defined and studied their principal series representations.  In 1982, Kato provided an irreducibility criterion for these representations, in the reductive case. We had  obtained partially this criterion in the Kac-Moody case. In this paper, we prove this criterion in the Kac-Moody case.
\end{abstract}

\section{Introduction}

\subsection{The reductive case}

Let  $G$ be a split reductive group over a non-Archimedean local field $\KC$. 
 To each open compact subgroup $K$ of $G$ is associated the Hecke algebra $\HC_K$. This is the algebra of functions from $G$ to $\C$ which have compact support and are $K$-bi-invariant.  There exists a strong link between the smooth representations of $G$ and the representations of the Hecke algebras of $G$. Let $K_I$ be the Iwahori subgroup of $G$. Then the Hecke algebra $\HC_\C$ associated with $K_I$ is called the Iwahori-Hecke algebra of $G$ and plays an important role in the representation theory of $G$. 
 
 Let $T$ be a maximal split torus of $G$ and $Y$ be the cocharacter lattice of $(G,T)$. Let $B$ be a Borel subgroup of $G$ containing $T$.  Let $T_\C=\Hom_{\mathrm{Gr}}(Y,\C^*)$ and $\tau\in T_\C$. Then $\tau$ can be extended to a character $\tau:B\rightarrow \C^*$. If $\tau\in T_\C$,  the principal series representation $I(\tau)$  of $G$ is the induction of $\tau\delta^{1/2}$ from $B$ to $G$, where $\delta:B\rightarrow \R^*_+$ is the modulus character of $B$. More explicitly, this  is the space of locally constant functions $f:G\rightarrow \C$ such that $f(bg)=\tau\delta^{1/2}(b)f(g)$ for every $g\in G$ and $b\in B$. Then $G$ acts on $I(\tau)$ by right translation.

Let  $W^v$ be the vectorial Weyl group of $(G,T)$. By the Bernstein-Lusztig relations, $\HC_\C$ admits a basis  $(Z^\lambda* T_w)_{\lambda\in Y, w\in W^v}$ such that $\bigoplus_{\lambda\in Y}\C Z^\lambda$ is a subalgebra of $\HC_\C$ isomorphic to the group algebra $\C[Y]$ of $Y$. We identify $\bigoplus_{\lambda\in Y}\C Z^\lambda$ and $\C[Y]$. We regard $\tau$ as an algebra morphism $\tau:\C[Y]\rightarrow\C$. Then the algebra  $\HC_\C$ acts on $I_{\tau,G}:=I(\tau)^{K_I}$,  $I(\tau)$ is irreducible as a representation of $G$ if and only $I_{\tau,G}$ is irreducible as a representation of $\HC_\C$ and $I_{\tau,G}$ is isomorphic to the induced representation $I_\tau=\mathrm{Ind}_{\C[Y]}^{\HC_\C}(\tau)$.

Matsumoto and Kato gave criteria for the irreducibility of $I_\tau$.  The group  $W^v$ acts on $Y$ and thus it acts on $T_\C$. If $\tau\in T_\C$, we denote by $W_\tau$ the stabilizer  of $\tau$ in $W^v$. Denote by $q$ be the residue cardinal of $\KC$.  Let $\Wta$ be the subgroup of $W_\tau$ generated by the reflections $r_{\alpha^\vee}$, for $\alpha^\vee\in \Phi^\vee$ such that $\tau(\alpha^\vee)=1$, where $\Phi^\vee$ stands for the coroot lattice of $G$. Then Kato proved the following theorem (see \cite[Theorem 2.4]{kato1982irreducibility}):
 
 \begin{Theorem*}\label{thm*Kato's theorem}
 Let $\tau\in T_\C$. Then $I_\tau$ is irreducible if and only if it satisfies the following conditions: \begin{enumerate}
\item\label{itWchi engendré par ses réflexions} $W_\tau=\Wta$,

\item for all $\alpha^\vee\in \Phi^\vee$, $\tau(\alpha^\vee)\neq q$. 
 \end{enumerate}
 \end{Theorem*}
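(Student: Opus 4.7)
The plan is to prove the criterion by splitting into the necessity and sufficiency directions, following the general strategy of Kato but making the adjustments needed because $W^v$ is now infinite and one is working inside the Bernstein-Lusztig completion of $\HC_\C$ of Bardy-Panse, Gaussent and Rousseau. The two central objects are the weight space decomposition of $I_\tau$ under $\C[Y]$ and the intertwining operators between weight spaces produced by the Bernstein-Lusztig relations.

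For the necessity direction, which was already handled in part in my earlier paper, I would revisit the weight space decomposition. If $\tau(\alpha^\vee)=q$ for some coroot $\alpha^\vee\in\Phi^\vee$, the Bernstein-Lusztig relation governing the simple reflection associated to $\alpha^\vee$ produces an explicit proper subspace of $I_\tau$ stable under $\HC_\C$ (equivalently, the corresponding intertwiner has a nontrivial kernel). If $W_\tau\neq\Wta$, then there is some $w\in W_\tau\setminus \Wta$, and the span of a carefully chosen sub-orbit under $\langle \Wta,w\rangle$ should again exhibit a proper submodule. In both cases the argument is local in the weight space picture, and the Kac-Moody features of $W^v$ do not intervene in an essential way.

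For the sufficiency direction, the heart of the proof, I would first introduce, for each $\mu$ in the $W^v$-orbit of $\tau$, the weight space $I_\tau^{(\mu)}$ under $\C[Y]$ and compute its dimension using condition (1): it is $|\Wta|$ when this orbit stabilizer is finite, and in general is controlled by the set of reduced expressions relating cosets representatives. Next, for every simple reflection $s_i$, condition (2) guarantees that the Bernstein-Lusztig intertwiner between $I_\tau^{(\mu)}$ and $I_\tau^{(s_i.\mu)}$ is well defined and invertible whenever $s_i.\mu\neq\mu$, and the $\Wta$-part of the stabilizer is under full control by (1). Given a nonzero submodule $N\subset I_\tau$ and a nonzero weight vector $v\in N$, I would propagate $v$ across the orbit using these simple intertwiners to see that $N\cap I_\tau^{(\mu)}\neq 0$ for every $\mu$, and then exploit (1) to match multiplicities inside a single weight space and conclude $N=I_\tau$.

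The main obstacle I expect is the infinite-rank setting. In the reductive case, Kato uses the longest element $w_0$ to compare extremal weight spaces; no such element exists in the Kac-Moody case, so one must argue only through simple-reflection intertwiners and an induction on the length of elements of $W^v$, while carefully tracking convergence in the Bernstein-Lusztig completion. A secondary difficulty is that only real coroots produce reflections in $W^v$, so intertwiners for non-simple real coroots have to be built as products of simple ones along reduced expressions, and one must verify that condition (2) stays intact along the way, i.e.\ that one never hits $\tau(\alpha^\vee)=q$ as an intermediate obstruction when writing such a product.
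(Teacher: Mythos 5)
There is a genuine gap, and it sits exactly where your outline is most confident. Your sufficiency argument propagates a nonzero weight vector of a submodule $N$ across the $W^v$-orbit of $\tau$ using the simple intertwiners, and then claims that ``the $\Wta$-part of the stabilizer is under full control by (1).'' It is not. Condition (1) says $W_\tau=\Wta$, but for $w\in\Wta$ the intertwiner $F_w$ has a \emph{pole} at $\tau$ (this is essentially the definition of $\Wta$: it is generated by the reflections $r_{\alpha^\vee}$ at which the denominator of $\zeta_{\alpha^\vee}$ vanishes), so these are precisely the directions in which you cannot propagate. When $\Wta\neq\{1\}$ the point $\tau$ is singular, the operator $\theta-\tau(\theta)$ acts non-semisimply, and the generalized weight space $I_\tau(\tau,\mathrm{gen})$ is strictly larger than the true weight space $I_\tau(\tau)$. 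The entire content of the present paper is to control this singular part: one replaces the $F_w$ by regularized operators $\tKC_w$ generating an auxiliary algebra $\KCC_\tau$ with its own Bernstein--Lusztig presentation, and one proves by a positivity-and-induction argument that for every $x$ in the $\Wta$-part $\Itg$ of the generalized weight space, the nilpotency order $\ord_\tau(x)$ of the ideal $\JC_\tau$ on $x$ equals $\ell_\tau(x)+1$; this forces $\Itg\cap I_\tau(\tau)=\C\vb_\tau$ and hence $\dim I_\tau(\tau)=|R_\tau|$. Your plan to ``match multiplicities inside a single weight space'' is exactly this step, and nothing in your outline supplies it. Without it, knowing $N\cap I_\tau^{(\mu)}\neq 0$ for all $\mu$ does not let you conclude $N=I_\tau$, because you cannot reach the generator $\vb_\tau$ from an arbitrary nonzero element of a weight space of dimension $>1$.

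Two smaller points. First, you locate the main obstacle in the absence of a longest element of $W^v$; the relevant absence is that of a longest element of $\Wta$ (the paper notes the proof simplifies when $\Wta$ is finite, precisely by multiplying up to $w_0\in\Wta$), and there is no convergence issue: $\AC_\C$ is not a completion, all sums are finite. Second, the necessity direction and the reduction of irreducibility to the statement $I_\tau(\tau)=\C\vb_\tau$ (via Frobenius reciprocity, $\End(I_\tau)\simeq I_\tau(\tau)$) were already established in the earlier papers and are simply quoted here; your sketch of necessity is compatible with that, but it is not where the new work lies.
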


When $\tau$ is \textbf{regular}, that is when $W_\tau=\{1\}$, condition~(\ref{itWchi engendré par ses réflexions}) is satisfied and this is a result by Matsumoto (see \cite[Th{\'e}or{\`e}me 4.3.5]{matsumoto77Analyse}).

\subsection{The Kac-Moody case}

Let $G$ be a split Kac-Moody group over a non-Archimedean local field $\KC$. We do not know which topology on $G$ could replace the usual topology on reductive groups over $\KC$.  There is up to now no definition of smoothness for the representations of $G$. However one can define certain Hecke algebras in this framework. In \cite{braverman2011spherical} and \cite{braverman2016iwahori}, Braverman, Kazhdan and Patnaik defined the spherical Hecke algebra and the Iwahori-Hecke $\HC_\C$ of $G$ when $G$ is affine. In \cite{gaussent2014spherical} and \cite{bardy2016iwahori}, Bardy-Panse, Gaussent and Rousseau generalized these constructions to the case where $G$ is a general Kac-Moody group. They achieved this construction by using masures (also known as hovels), which are analogous to Bruhat-Tits buildings (see \cite{gaussent2008kac}).

Let $B$ be a positive Borel subgroup of $G$ and $T$ be a maximal split torus of $G$ contained in $B$.  Let $Y$ be the cocharacter lattice of $G$, $W^v$ be the Weyl group of $G$ and $Y^{++}$ be the set of dominant cocharacters of $Y$. The Bruhat decomposition does not hold on $G$: if $G$ is not reductive, \[G^+:=\bigsqcup_{\lambda\in Y^{++} }K_I \lambda K_I\subsetneq G.\]  The set $G^+$ is a sub-semi-group of $G$. Then $\HC_\C$ is defined to be the set of functions from $K_I\backslash G^+/K_I$ to $\C$ which have finite support.  The Iwahori-Hecke algebra $\HC_\C$ of $G$ admits a Bernstein-Lusztig presentation but it is no longer indexed by $Y$. Let $Y^+=\bigcup_{w\in W^v} w.Y^{++}\subset Y$. Then $Y^+$ is the \textbf{integral Tits cone} and we have $Y^+=Y$ if and only $G$ is reductive. The \textbf{Bernstein-Lusztig-Hecke algebra of }$G$ is the space $\AC_\C=\bigoplus_{w\in W^v} \C[Y] *T_w$ subject to to some relations (see subsection~\ref{subIH algebras}). Then $\HC_\C$ is isomorphic to $\bigoplus_{w\in W^v} \C[Y^+]* T_w$. 

Let $\tau\in T_\C=\Hom_{\mathrm{Gr}}(Y,\C^*)$. In  \cite[6]{hebert2018principal} we defined the space $\widehat{I(\tau)}$ of functions $f$ from $G$ to $\C$ such that for all $g\in G$, $b\in B$, we have $f(bg)=\tau\delta^{1/2}(b)f(g)$. As we do not know which condition could replace 
``locally constant'', we do not impose any regularity condition on the functions of $\widehat{I(\tau)}$. Then $G$  acts by right translation on $\widehat{I(\tau)}$.
Let $I_{\tau,G}$ be the subspace of functions $f\in \widehat{I(\tau)}$   which are invariant under the action of $K_I$ and whose support satisfy some finiteness conditions. We defined an action of $\HC_\C$ on $I_{\tau,G}$. This actions extends to an action of $\AC_\C$ on $I_{\tau,G}$ and is isomorphic to the induced representation $I_\tau=\mathrm{Ind}_{\C[Y]}^{\AC_\C}(\tau)$. Moreover the $\AC_\C$-submodules of $I_\tau$ are exactly the $\HC_\C$-submodules of $I_\tau$ (see \cite[Proposition 3.1]{hebert2021decompositions}) and thus we regard $I_\tau$ as a $\AC_\C$-module. We then obtained a weak version of Theorem~\ref{thm*Kato's theorem}: we obtained one implication and we proved the equivalence only under the assumption that the Kac-Moody matrix defining $G$ has size $2$ (see \cite[Theorem 3 and 4]{hebert2018principal}). In this paper, we prove Theorem~\ref{thm*Kato's theorem} in a full generality (see Corollary~\ref{corKatos_irreducibility_criterion}).

\paragraph{Basic ideas of the proof} Let us explain the basic ideas of our proof. We have $I_\tau=\bigoplus_{w\in W^v} \C T_w .\vb_\tau$, where $\vb_\tau\in I_\tau$ is such that $\theta.\vb_\tau=\tau(\theta).\vb_\tau$ for $\theta\in \C[Y]$. For $w\in W^v$, one sets \[I_\tau(w.\tau)=\{x\in I_\tau|\theta.x=w.\tau(\theta).x,\ \forall\theta\in \C[Y]\}.\] By the  Frobenius reciprocity, $\Hom_{\AC_\C-\mathrm{mod}}(I_\tau,I_{w.\tau})$ is isomorphic to $I_\tau(w.\tau)$ as a vector space. Let $\UC_\C$ be the set of $\tau\in T_\C$ such that $\tau(\alpha^\vee)\neq q$ for all $\alpha^\vee\in \Phi^\vee$. Let $\tau\in \UC_\C$. In \cite[Theorem 4.8]{hebert2018principal} we proved that $I_\tau$ is irreducible if and only if $\End_{\AC_\C-\mathrm{mod}}(I_\tau)=\C.\Id$, if and only if $I_\tau(\tau)=\C\vb_\tau$. 

In order to study $I_\tau(w.\tau)$, for $w\in W^v$, it is convenient to introduce $\AC(T_\C)=\bigoplus_{w\in W^v} T_w *\C(Y)$. The elements of $\AC(T_\C)$ can be regarded as rational functions from an open subset of $T_\C$ to $\HC_{W^v,\C}=\bigoplus_{w\in W^v} \C T_w$. Following Reeder, we introduced elements $F_w\in \ATC$, $w\in W^v$, such that for all $\chi\in T_\C$ for which $F_w(\chi)$ is well defined, $F_w(\chi).\vb_\chi\in I_\chi(w.\chi)$. The group $W_\tau$ decomposes as $W_\tau=R_\tau\ltimes \Wta$,  where $R_\tau$ is some subgroup of $W_\tau$ called the $R$-group. If $w_R\in R_\tau$, then $F_{w_R}$ has no pole at $\tau$ and thus $F_{w_R}(\tau).\vb_\tau$ corresponds to an element $\psi_{w_r}$ of $\End(I_\tau)$. For $w\in \Wta$ however, $F_w$ has poles at $\tau$ and thus describing $I_\tau(\tau)$ requires some works.   Inspired by works of Reeder and Keys in the reductive case, (\cite{reeder1997nonstandard} and \cite{keys1982decomposition}), we determined \[I_\tau(\tau,\mathrm{gen})=\{x\in I_\tau|\forall \theta\in \C[Y], \forall n\gg 0, \big(\theta-\tau(\theta)\big)^n.x=0\}.\] We proved (see \cite[Proposition 5.13]{hebert2021decompositions}) that \[I_\tau(\tau,\mathrm{gen})=\bigoplus_{w_R\in R_\tau} \psi_{w_R}\left(\Itg\right),\] where $\Itg:=\left(\AC_\C\cap  \bigoplus_{w\in \Wta}F_w*\C(Y)\right).\vb_\tau$ corresponds to the ``$\Wta$-part'' of $I_\tau(\tau,\mathrm{gen})$. We deduced that \[I_\tau(\tau)=\bigoplus_{w_R\in R_\tau} \psi_{w_R}\left(I_\tau(\tau)\cap \Itg\right).\]

It then remained to prove that $\Itg\cap I_\tau(\tau)=\C \vb_\tau$, which we achieve in this paper (see Theorem~\ref{thmWeight_space}). Note that by our description of  $\Itg$, proving that $I_\tau(\tau)\cap \Itg=\C\vb_\tau$ can more or less be reduced to proving that  $I_{\mathds{1}}(\mathds{1})=\C \vb_{\mathds{1}}$, where $I_{\mathds{1}}=\mathrm{Ind}_{\C(Y)_\tau}^{\KCC_\tau}(\mathds{1})$,    $\C(Y)_\tau$ is the subset of $\C(Y)$ consisting  of the elements which have no pole at $\tau$, $\mathds{1}:Y\rightarrow \C$ is the constant function equal to $1$ and $\KCC_\tau\subset \bigoplus_{w\in W^v} T_w*\C(Y)_\tau$ is some kind of Bernstein-Lusztig-Hecke algebra associated with $\tau$ (see subsection~\ref{subBLH_structure_K} for the definition of $\KCC_\tau$).  

We then deduce:

\begin{Corollary*}\label{cor*End_I_tau}(see Corollary~\ref{corSilberger_dimension_theorem} and Corollary~\ref{corKatos_irreducibility_criterion})
Let $\tau\in \UC_\C$. Then $\End(I_\tau)\simeq \C[R_\tau]$, where $R_\tau=W_\tau/\Wta$. In particular, $\End(I_\tau)=\C\Id$ if and only $R_\tau=\{1\}$.
\end{Corollary*}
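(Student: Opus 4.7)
The plan is to reduce the computation of $\End(I_\tau)$ to that of the weight space $I_\tau(\tau)$, and then to combine the $R$-group decomposition recalled in the introduction with the new ``generic weight'' theorem proved in this paper. The starting point is the Frobenius reciprocity isomorphism $\End_{\AC_\C-\mathrm{mod}}(I_\tau) \simeq I_\tau(\tau)$ given by $\varphi \mapsto \varphi(\vb_\tau)$, under which composition of endomorphisms corresponds to a natural associative product on $I_\tau(\tau)$; so it suffices to determine $I_\tau(\tau)$ as an algebra.

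For the underlying vector space, I would combine the decomposition
\[
I_\tau(\tau) = \bigoplus_{w_R \in R_\tau} \psi_{w_R}\left(I_\tau(\tau) \cap \Itg\right)
\]
with Theorem~\ref{thmWeight_space}, which asserts $I_\tau(\tau) \cap \Itg = \C \vb_\tau$. These together yield
\[
I_\tau(\tau) = \bigoplus_{w_R \in R_\tau} \C \cdot \psi_{w_R}(\vb_\tau),
\]
so $\dim_\C \End(I_\tau) = |R_\tau|$, which is Corollary~\ref{corSilberger_dimension_theorem}.

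Next I would upgrade this vector space identification to an algebra isomorphism. Define a linear map $\C[R_\tau] \to \End(I_\tau)$ sending $w_R$ to $\psi_{w_R}$; its injectivity --- hence bijectivity, by the dimension count --- follows because $\{\psi_{w_R}(\vb_\tau)\}_{w_R \in R_\tau}$ is a basis of $I_\tau(\tau)$. The real content is to show that this map is a ring homomorphism, i.e.\ $\psi_{w_R} \circ \psi_{w'_R} = \psi_{w_R w'_R}$ for all $w_R, w'_R \in R_\tau$. I expect this to follow from the multiplicative structure of Reeder's normalized intertwiners $F_w \in \ATC$: for $w_R, w'_R \in R_\tau$ all relevant specializations are regular at $\tau$, and the coroots responsible for the poles of the $F_w$ are precisely those killed by $\Wta$, so the product $F_{w_R} * F_{w'_R}$ specializes at $\tau$ to $F_{w_R w'_R}(\tau)$ with no cocycle correction. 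This is the main technical point, and it parallels the reductive treatment of Reeder and Keys; it should transfer to the Kac-Moody setting using the Bernstein-Lusztig-Hecke framework already developed in the earlier papers.

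The ``in particular'' assertion is then immediate: $\End(I_\tau) = \C\Id$ is equivalent to $|R_\tau| = 1$, i.e.\ $W_\tau = \Wta$, which is condition~(1) of Kato's theorem. Combining this with \cite[Theorem 4.8]{hebert2018principal} (irreducibility of $I_\tau$ equivalent to $\End(I_\tau) = \C\Id$) and with the standing assumption $\tau \in \UC_\C$ (condition~(2)) recovers Kato's criterion, Corollary~\ref{corKatos_irreducibility_criterion}.
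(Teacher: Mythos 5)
Your proposal follows essentially the same route as the paper: Corollary~\ref{corSilberger_dimension_theorem} is obtained there by restricting the decomposition \eqref{eqHeb21_5.12} to $I_\tau(\tau)$, invoking Theorem~\ref{thmWeight_space} to get $\Itg\cap I_\tau(\tau)=\C\vb_\tau$, and importing the multiplicative structure of the $\psi_{w_R}$ from \cite[Propositions 5.13 (2) and 5.27]{hebert2021decompositions}. The one step you should not wave through is the claim that $F_{w_R}*F_{w_R'}$ specializes at $\tau$ to $F_{w_Rw_R'}(\tau)$ ``with no cocycle correction'': when $\ell(w_Rw_R')<\ell(w_R)+\ell(w_R')$ the relation $F_s^2=\zeta_s\cdot{^s\zeta_s}$ produces scalar factors, so a priori one only gets a twisted group algebra of $R_\tau$, and the triviality of this $2$-cocycle is exactly where the hypothesis $\sigma_s=\sigma_s'$ for all $s\in\SCC$ enters --- which is why the paper asserts $\End(I_\tau)\simeq\C[R_\tau]$ only under that hypothesis and warns that the corollary can fail for almost-split groups even in the reductive case. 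Your dimension count and the ``in particular'' deduction are correct as stated.
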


In \cite{hebert2021decompositions}, we studied the submodules and the quotients of $I_\tau$, for $\tau\in \UC_\C$. Many results were proved  only when the Kac-Moody matrix defining $G$ has size $2$ or under some conjecture (\cite[Conjecture 5.16]{hebert2021decompositions}). As we prove this conjecture, we can drop the assumption on the size of the matrix. In particular, \cite[Theorem 5.34 and Theorem 5.38]{hebert2021decompositions} yield links between the submodules and the quotients of $I_\tau$  and the right submodules and quotients of $\End(I_\tau)$ respectively.

\paragraph{Frameworks}
Actually, following \cite{bardy2016iwahori} we study Iwahori-Hecke algebras associated with abstract masures. In particular our results also apply when $G$ is an almost-split Kac-Moody group over a non-Archimedean local field. In this case, most of the results of this introduction are true but the formulas are more complicated (they are given in the paper). Corollary~\ref{cor*End_I_tau} is not necessarily true for almost-split groups, even in the reductive case.

\paragraph{Organization of the paper}
This paper is organized as follows. In section~\ref{secIH algebras}, we recall the definitions of the Iwahori–Hecke algebras and of the principal series representations, and we introduce tools to study these representations.

In section~\ref{secDescription_Itg}, we study the algebra $\KCC_\tau$ mentioned above and describe $\Itg$.

In section~\ref{secKato_s_irreducibility_criterion}, we prove Kato's irreducibility criterion.

\section{Iwahori-Hecke algebras}\label{secIH algebras}\label{secIH_algebras}
Let $G$ be a Kac-Moody group over a non-archimedean local field. Then Gaussent and Rousseau constructed a space $\I$, called a masure on which $G$ acts, generalizing the construction of the Bruhat-Tits buildings (see \cite{gaussent2008kac}, \cite{rousseau2016groupes} and \cite{rousseau2017almost}). In \cite{bardy2016iwahori} Bardy-Panse, Gaussent and Rousseau attached an Iwahori-Hecke algebra $\HC_\RC$ to each  masure satisfying certain conditions and to each ring $\RC$. They in particular attach an Iwahori-Hecke algebra to each almost-split Kac-Moody group over a local field. The algebra $\HC_\RC$ is an algebra of functions defined on some pairs of chambers of the masure, equipped with a convolution product. Then they prove that under some additional hypothesis on the ring $\RC$ (which are satisfied by  $\C$),  $\HC_\RC$  admits a Bernstein-Lusztig presentation.  We restrict our study to the case where $\RC=\C$.  In this paper, we will only use the Bernstein-Lusztig presentation of $\HC_\C$. More precisely, we introduce an algebra $\AC(T_\C)=\bigoplus_{w\in W^v} T_w*\C(Y)$, which contains both $\AC_\C$ and $\HC_\C$. We mainly study $\AC(T_\C)$ and $\AC_\C$. We do not introduce masures nor Kac-Moody groups. We however introduce the standard apartment of a masure.

\subsection{Standard apartment of a masure}\label{subRootGenSyst}
A (finite)\textbf{ Kac-Moody matrix} (or { generalized Cartan matrix}) is a square matrix $A=(a_{i,j})_{i,j\in I}$ indexed by a finite set $I$, with integral coefficients, and such that :
\begin{enumerate}
\item[\tt $(i)$] $\forall \ i\in I,\ a_{i,i}=2$;

\item[\tt $(ii)$] $\forall \ (i,j)\in I^2, (i \neq j) \Rightarrow (a_{i,j}\leq 0)$;

\item[\tt $(iii)$] $\forall \ (i,j)\in I^2,\ (a_{i,j}=0) \Leftrightarrow (a_{j,i}=0$).
\end{enumerate}

In the paper, we will also consider infinite Kac-Moody matrices. The definition is the same except that $I$ is infinite. However we will only consider root generating systems associated with finite Kac-Moody matrices.

A \textbf{root generating system} is a $5$-tuple $\mathcal{S}=(A,X,Y,(\alpha_i)_{i\in I},(\alpha_i^\vee)_{i\in I})$\index{$\mathcal{S}$} made of a finite Kac-Moody matrix $A$ indexed by the finite set $I$, of two dual free $\Z$-modules $X$ and $Y$ of finite rank, and of a free family $(\alpha_i)_{i\in I}$ (respectively $(\alpha_i^\vee)_{i\in I}$) of elements in $X$ (resp. $Y$) called \textbf{simple roots} (resp. \textbf{simple coroots}) that satisfy $a_{i,j}=\alpha_j(\alpha_i^\vee)$ for all $i,j$ in $I$. Elements of $X$ (respectively of $Y$) are called \textbf{characters} (resp. \textbf{cocharacters}).

Fix such a root generating system $\mathcal{S}=(A,X,Y,(\alpha_i)_{i\in I},(\alpha_i^\vee)_{i\in I})$ and set $\A:=Y\otimes \R$\index{$\A$}. Each element of $X$ induces a linear form on $\A$, hence $X$ can be seen as a subset of the dual $\A^*$. In particular, the $\alpha_{i}$'s (with $i \in I$) will be seen as linear forms on $\A$. This allows us to define, for any $i \in I$, an involution $r_{i}$ of $\A$ by setting $r_{i}(v) := v-\alpha_i(v)\alpha_i^\vee$ for any $v \in \A$. Let $\SCC=\{r_i|i\in I\}$\index{$\SCC$} be the (finite) set of \textbf{simple reflections}.  One defines the \textbf{Weyl group of $\mathcal{S}$} as the subgroup $W^{v}$\index{$W^v$} of $\mathrm{GL}(\A)$ generated by $\SCC$. The pair $(W^{v}, \SCC)$ is a Coxeter system, hence we can consider the length $\ell(w)$ with respect to $\SCC$ of any element $w$ of $W^{v}$. If $s\in \SCC$, $s=r_i$ for some unique $i\in I$. We set $\alpha_s=\alpha_i$ and $\alpha_s^\vee=\alpha_i^\vee$.

The following formula defines an action of the Weyl group $W^{v}$ on $\A^{*}$:  
\[\displaystyle \forall \ x \in \A , w \in W^{v} , \alpha \in \A^{*} , \ (w.\alpha)(x):= \alpha(w^{-1}.x).\]
Let $\Phi:= \{w.\alpha_i|(w,i)\in W^{v}\times I\}$\index{$\Phi,\Phi^\vee$} (resp. $\Phi^\vee=\{w.\alpha_i^\vee|(w,i)\in W^{v}\times I\}$) be the set of \textbf{real roots} (resp. \textbf{real coroots}): then $\Phi$ (resp. $\Phi^\vee$) is a subset of the \textbf{root lattice} $Q_\Z:= \displaystyle \bigoplus_{i\in I}\Z\alpha_i$ (resp. \textbf{coroot lattice} $Q^\vee_\Z=\bigoplus_{i\in I}\Z\alpha_i^\vee$). By \cite[1.2.2 (2)]{kumar2002kac}, we have $\R \alpha^\vee\cap \Phi^\vee=\{\pm \alpha^\vee\}$ and $\R \alpha\cap \Phi=\{\pm \alpha\}$ for all $\alpha^\vee\in \Phi^\vee$ and $\alpha\in \Phi$.

\paragraph{Reflections and roots}

We equip $(W^v,\SCC)$ with the Bruhat order $\leq$ (see \cite[Definition 2.1.1]{bjorner2005combinatorics}).

Let $\RCC=\{wsw^{-1}|w\in W^v, s\in \SCC\}$\index{$\RCC$} be the set of \textbf{reflections} of $W^v$. Let  $r\in \RCC$. Write $r=wsw^{-1}$, where $w\in W^v$, $s\in \SCC$ and $ws>w$ (which is possible because if $ws<w$, then $r=(ws)s(ws)^{-1}$). Then one sets $\alpha_r=w.\alpha_s\in \Phi_+$\index{$\alpha_r,\alpha_r^\vee$} (resp. $\alpha_r^\vee=w.\alpha_s^\vee\in\Phi^\vee_+$). Conversely, if $\alpha\in \Phi$ or $\alpha^\vee\in \Phi^\vee$, $\alpha=w.\alpha_s$ or $\alpha^\vee=w.\alpha_s^\vee$, where $w\in W^v$ and $s\in \SCC$, one sets $r_{\alpha}=wsw^{-1}$ or $r_{\alpha^\vee}=wsw^{-1}$.  This is independant of the choices of $w$ and $s$ by \cite[1.3.11 Theorem (b5)]{kumar2002kac}.

\subsection{Iwahori-Hecke algebras}\label{subIH algebras}
In this subsection, we give the definition of  the Iwahori-Hecke algebra via its Bernstein-Lusztig presentation.
\subsubsection{The algebra $\AC(T_\C)$}\label{subsubAlgebra_H(T_F)}

Let $(\sigma_s)_{s\in \SCC}, (\sigma'_s)_{s\in \SCC}\in \C^{\SCC}$ satisfying the following relations: \begin{itemize}
\item if $\alpha_{s}(Y) = \Z$, then $\sigma_{s} = \sigma'_{s}$\index{$\sigma_s,\sigma_s'$};  
\item if $s,t \in \SCC$ are conjugate (i.e if there exists a sequence $s_1,\ldots,s_n\in \SCC$ such that $s_1=s$, $s_n=t$ and $\alpha_{s_i}(\alpha_{s_{i+1}}^\vee)=\alpha_{s_{i+1}}(\alpha_{s_i}^\vee)=-1$, for $i\in \llbracket 1,n-1\rrbracket$), then $\sigma_{s}=\sigma_{t}=\sigma'_{s}=\sigma'_{t}$.
\end{itemize} As in \cite{hebert2018principal}, we moreover assume that $|\sigma_s|,|\sigma_s'|>1$, for all $s\in \SCC$.

\begin{Definition} Let $\HC_{W^v,\C}$ be the \textbf{Hecke algebra of the Coxeter group $W^v$ over $\C$}, that is: \begin{itemize}

\item  as a vector space, $\HC_{W^v,\C}=\bigoplus_{w\in W^v} \C T_w$, where the $T_w$, $w\in W^v$ are symbols,

\item $\forall \ s \in \SCC, \forall \ w \in W^{v}$, $T_{s}*T_{w}=\left\{\begin{aligned} & T_{sw} &\mathrm{\ if\ }\ell(sw)=\ell(w)+1\\ & (\sigma_{s}^2-1) T_{w}+\sigma_s^2 T_{s w} &\mathrm{\ if\ }\ell(sw)=\ell(w)-1 .\end{aligned}\right . \ $
\end{itemize}

\end{Definition}

Let $\C[Y]$ be the group algebra of $Y$ over $\C$, that is:\begin{itemize}
\item as a vector space, $\C[Y]=\bigoplus_{\lambda\in Y} \C Z^\lambda$, where the $Z^\lambda$, $\lambda\in Y$ are symbols,

\item for all $\lambda,\mu\in Y$, $Z^\lambda*Z^\mu=Z^{\lambda+\mu}$.
\end{itemize}

 We denote by $\C(Y)$ its field of fractions. For $\theta=\frac{\sum_{\lambda\in Y}{a_\lambda Z^\lambda}}{\sum_{\lambda\in Y} b_\lambda Z^\lambda}\in \C(Y)$ and $w\in W^v$, set ${^w}\theta:=\frac{\sum_{\lambda\in Y}a_\lambda Z^{w.\lambda}}{\sum_{\lambda\in Y}b_\lambda Z^{w.\lambda}}$. 

\medskip

Let $\AC(T_\C)$ be the algebra defined as follows: \begin{itemize}
\item as a vector space, $\AC(T_\C)= \C(Y)\otimes \HC_{W^v, \C}$ (we write $\theta*h$ instead of $\theta\otimes h$ for $\theta\in \C(Y)$ and $h\in \HC_{W^v,\C}$),

\item $\AC(T_\C)$ is equipped with the unique product $*$ which turns it into an associative algebra and such that, for  $\theta\in \C(Y)$ and $s\in \SCC$, we have: \[T_{s}*\theta-{^s\theta}*T_{s} =\tilde{\Omega}_s(\theta-{^s\theta}),\] where $\tilde{\Omega}_s(\theta)=Q_s^T(\theta-{^s\theta})$ and $Q_s^T=\frac{(\sigma_s^2-1)+\sigma_s(\sigma_s'-\sigma_s'^{-1})Z^{-\alpha_s^\vee}}{1-Z^{-2\alpha_s^\vee}}$\index{$Q_s^T$}. 

\end{itemize}

By \cite[Proposition 2.10]{hebert2018principal}, such an algebra exists and is unique.

\subsubsection{The Bernstein-Lusztig Hecke algebra and the Iwahori-Hecke algebra}

Let $C^v_f=\{x\in \A|\alpha_i(x)>0\forall i\in I\}$\index{$C^v_f$}, $\T=\bigcup_{w\in W^v} w.\overline{C}^v_f$\index{$\T$} be the \textbf{Tits cone} and $Y^+=Y\cap \T$\index{$Y^+$}.

\begin{Definition}\label{defBernstein-Lusztig algebra}
 The \textbf{Bernstein-Lusztig-Hecke algebra of } $(\A,(\sigma_s)_{s\in \SCC},(\sigma'_s)_{s\in \SCC})$ over $\C$ is the subalgebra $\AC_\C=\bigoplus_{\lambda\in Y,w\in W^v}\C Z^\lambda*T_w=\bigoplus_{\lambda\in Y,w\in W^v}\C  T_w* Z^\lambda$ of $\AC(T_\C)$. The \textbf{Iwahori-Hecke algebra of $(\A,(\sigma_s)_{s\in \SCC},(\sigma'_s)_{s\in \SCC})$ over $\C$} is the subalgebra $\HC_\C=\bigoplus_{\lambda\in Y^+,w\in W^v}\C Z^\lambda*T_w=\bigoplus_{\lambda\in Y^+,w\in W^v}\C T_w*Z^\lambda$ of $\AC_\C$. Note that for $G$ reductive, we recover the usual Iwahori-Hecke algebra of $G$, since $ \T=\A$.
\end{Definition}

\begin{Remark}\label{remIH algebre dans le cas KM deploye}

\begin{enumerate}
\item The algebra $\AC_\C$  was first defined in \cite[Theorem 6.2]{bardy2016iwahori} without defining $\AC(T_\C)$. Let $\mathcal{K}$ be a non-Archimedean local field and $q$ be its residue cardinal. Let $\mathbf{G}$ be the minimal Kac-Moody group associated with $\mathcal{S}=(A,X,Y,(\alpha_i)_{i\in I},(\alpha_i^\vee)_{i\in I})$\index{$\mathcal{S}$} and $G=\mathbf{G}(\mathcal{K})$ (see \cite[Section 8]{remy2002groupes} or \cite{tits1987uniqueness} for the definition). Take $\sigma_s=\sigma'_s=\sqrt{q}$ for all $s\in \SCC$. Then $\HC_\C$ is the Iwahori-Hecke algebra of $G$ (see \cite[Definition 2.5 and 6.6 Proposition]{bardy2016iwahori}). In the case where $G$ is an untwisted affine Kac-Moody group, these algebras were introduced in \cite{braverman2016iwahori}. Note also  that our frameworks is more general than the one of split Kac-Moody groups over local fields. It enables for example to study the Iwahori-Hecke algebras associated with almost split Kac-Moody groups over local fields, as in \cite{bardy2016iwahori}. In this case we do not have necessarily $\sigma_s=\sigma_s'=\sigma_t=\sigma_t'$ for all $s,t\in \SCC$.

\item Let $s\in \SCC$. Then if $\sigma_s=\sigma'_s$, $Q_s^T=\frac{\sigma_s^2-1}{1-Z^{-\alpha_s^\vee}}$.

\item\label{itPolynomiality_Bernstein_Lusztig} Let $s\in \SCC$. Then $\tilde{\Omega}_s(\C[Y])\subset \C[Y]$ and $\tilde{\Omega}_s(\C[Y^+])\subset \C[Y^+]$ Indeed, let $\lambda\in Y$. Then $Q_s^T(Z^\lambda-Z^{s.\lambda})=Q^T_s.Z^\lambda(1-Z^{-\alpha_s(\lambda)\alpha_s^\vee})$. Assume that $\sigma_s=\sigma_s'$. Then \[ \frac{1-Z^{-\alpha_s(\lambda)\alpha_s^\vee}}{1-Z^{-\alpha_s^\vee}}=\left\{\begin{aligned} &\sum_{j=0}^{\alpha_s(\lambda)-1}Z^{-j\alpha_s^\vee}  &\mathrm{\ if\ }\alpha_s(\lambda)\geq 0\\ & -Z^{\alpha_s^\vee}\sum_{j=0}^{-\alpha_s(\lambda)-1}Z^{j\alpha_s^\vee}  &\mathrm{\ if\ }\alpha_s(\lambda)\leq 0,\end{aligned}\right.\] and thus $Q_s^T(Z^\lambda-Z^{s.\lambda})\in \C[Y]$. If $\sigma_s'\neq \sigma_s$, then $\alpha_s(Y)=2\Z$ and a similar computation enables to conclude. In particular, $\AC_\C$ and $\HC_\C$ are subalgebras of $\AC(T_\C)$.

\item In \cite{hebert2018principal} and \cite{hebert2021decompositions}, we used the basis $(H_w)_{w\in W^v}$ instead of $(T_{w})_{w\in W^v}$ in the presentation of $\HCW$ and $\AC_\C$. If $w\in W^v$, $w=s_1\ldots s_k$, with $k=\ell(w)$ and $s_1,\ldots,s_k\in \SCC$, we have  $T_w=\sigma_{s_1}\ldots \sigma_{s_k} H_w$. 

\end{enumerate}

\end{Remark}

\subsection{Principal series representations}\label{subPrincipal series representations}
In this subsection, we introduce the principal series representations of $\AF$. 

We now fix $(\A,(\sigma_s)_{s\in \SCC},(\sigma'_s)_{s\in \SCC})$ as in Subsection~\ref{subIH algebras}. Let  $\AC_\C$ be the Iwahori-Hecke and the Bernstein-Lusztig Hecke algebras of $(\A,(\sigma_s)_{s\in \SCC},(\sigma'_s)_{s\in \SCC})$  over $\C$.

Let  $T_\C= \Hom_{\mathrm{Gr}}(Y,\C^*)$\index{$T_\C$} be the group of group morphisms from $Y$ to $\C^*$. Let $\tau\in T_\C$. Then $\tau$ induces an algebra morphism $\tau:\C[Y]\rightarrow \C$ by the formula $\tau(\sum_{\lambda\in Y} a_\lambda  Z^\lambda)=\sum_{\lambda\in Y} a_\lambda \tau(\lambda)$, for $\sum a_\lambda Z^\lambda\in \C[Y]$. This equips $\C$ with the structure of a  $\C[Y]$-module.

Let $I_\tau=\mathrm{Ind}^{\AC_\C}_{\C[Y]}(\tau)=\AC_\C\otimes_{\C[Y]} \C$\index{$I_\tau$}. As a vector space, $I_\tau=\bigoplus_{w\in W^v} \C \vb_\tau$, where $\vb_\tau$ is some symbol. The actions of $\AC_\C$ on $I_\tau$ is as follows. Let $h=\sum_{w\in W^v} T_w P_w\in \AC_\C$, where $P_w\in \C[Y]$ for all $w\in W^v$. Then $h.\vb_\tau=\sum_{w\in W^v} \tau(P_w)T_w\vb_\tau$. In particular, $I_\tau$ is a principal $\AC_\C$-module generated by $\vb_\tau$.

If $A$ is a vector space over $\C$ and $B$ is a set, we denote by $A^{(B)}$ the set of families $(a_b)_{b\in B}$ such that $\{b\in B|a_b\neq 0\}$ is finite. We regard the elements of $\C[Y]$ as polynomial functions on $T_\C$ by setting: \[\tau(\sum_{\lambda\in Y}a_\lambda Z^\lambda)=\sum_{\lambda\in Y}a_\lambda\tau(\lambda),\] for all $(a_\lambda)\in \C^{(Y)}$. The ring $\C[Y]$ is a unique factorization domain. Let $\theta\in \C(Y)$ and $(f,g)\in \C[Y]\times \C[Y]^*$ be such that $\theta=\frac{f}{g}$ and $f$ and $g$ are coprime. Set $\DC(\theta)=\{\tau\in T_\C|\tau(g)\neq 0\}$. Then we regard  $\theta$ as a map from $\DC(\theta)$ to $\C$ by setting $\theta(\tau)=\frac{f(\tau)}{g(\tau)}$ for all $\tau\in \DC(\theta)$.

If $\tau\in T_\C$, let $\C(Y)_\tau=\{\frac{f}{g}|f,g\in \C[Y]\text{ and } g(\tau)\neq 0\}\subset \C(Y)$\index{$\C(Y)_\tau$}. Let $\ATF_\tau=\bigoplus_{w\in W^v} T_w \C(Y)_\tau \subset \ATF$\index{$\ATF_\tau,\ATC_\tau$}. This is a not a subalgebra of $\ATF$ (consider for example $\frac{1}{Z^\lambda-1}*T_s=T_s*\frac{1}{Z^{s.\lambda}-1}+\ldots$ for some well chosen $\lambda\in Y$, $s\in \SCC$ and $\tau\in T_\C$). It is however an $\HFW-\C(Y)_\tau$ bimodule. For $\tau\in T_\C$, we define $\ev_\tau:\ATF_\tau\rightarrow \HFW$\index{$ev_\tau$} by $\ev_\tau(h)=h(\tau)=\sum_{w\in W^v} T_w\theta_w(\tau)$ if $h=\sum_{w\in W^v}T_w \theta_w\in \ATC_\tau$. This is a morphism of $\HFW-\C(Y)_\tau$-bimodules.

\subsection{Decomposition of $W_\tau$}

For $r=wsw^{-1}\in \RCC$, with $w\in W^v$ and $s\in \SCC$, we define $Q^T_r={^w\left(Q_s^T\right)}$ and $\sigma_r=\sigma_s$. This does not depend on the choices of $w$ and $s$. For $r\in \RCC$, set  $\zeta_r=-Q^T_r+\sigma_r^2$. Write $\zeta_r=\frac{\zeta_r^{\mathrm{num}}}{\zeta_r^{\mathrm{den}}}$\index{$\zeta_r$}, with $\zeta_r^{\mathrm{num}},\zeta_{r}^{\mathrm{den}}$ are coprime elements of $\C[Y]$. 

 For $\tau\in T_\C$, set $W_\tau=\{w\in W^v|\ w.\tau=\tau\}$\index{$W_\tau$}, $\Phi^\vee_{(\tau)}=\{\alpha^\vee\in \Phi^\vee| \zeta_{\alpha^\vee}^{\mathrm{den}}(\tau)=0\}$\index{$\Phi^\vee_{(\tau)}$}, $\Phi^\vee_{(\tau),+}=\Phi^\vee_{(\tau)}\cap \Phi^\vee_+$,  $\RCC_{(\tau)}=\{r=r_{\alpha^\vee}\in \RCC|\alpha^\vee\in \Phi^\vee_{(\tau)}\}$\index{$\RCC_{(\tau)}$} and \[\Wta=\langle \RCC_{(\tau)}\rangle=\langle \{r=r_{\alpha^\vee}\in \RCC| \zeta_{\alpha^\vee}^{\mathrm{den}}(\tau)=0\}\rangle\subset W^v.\]\index{$\Wta$} 

 By \cite[Remark 5.1]{hebert2018principal}, $\Wta\subset W_\tau$. When $\alpha_s(Y)=\Z$ for all $s\in \SCC$, then $\Wta=\langle W_\tau\cap \RCC\rangle$. Let \[R_\tau=\{w\in W_\tau|w.\Phi^\vee_{(\tau),+}=\Phi^\vee_{(\tau),+}\}.\] By \cite[Lemma 5.2 and 5.3]{hebert2021decompositions}, $\Wta$ is normal in $W_\tau$ and $w_\tau=R_\tau\ltimes \Wta$. We defined in \cite[5.4]{hebert2018principal} a set $\SCC_\tau\subset\RCC\cap \Wta$ for which $(\Wta,\SCC_\tau)$ is a Coxeter system. We denote by $\leq_\tau$ the Bruhat order and by $\ell_\tau$ the length on $(\Wta,\SCC_\tau)$. Then by \cite[Lemma 5.10]{hebert2018principal}, for all $w,w'\in \Wta$, we have $w\leq_\tau w'$ implies $w\leq w'$.

\subsection{Weights and intertwining operators}

If $\tau\in T_\C$, we denote by $\End(I_\tau)$ the algebra of endomorphisms of $\AC_\C$-modules of $I_\tau$. Let  \[I_\tau(\tau)=\{x\in I_\tau|\theta.x=\tau(\theta).x\forall \theta\in \C[Y]\}\] and \[I_\tau(\tau,\mathrm{gen})=\{x\in I_\tau|\forall\theta\in \C[Y], (\theta-\tau(\theta))^n.x=0,\ \forall n\gg 0\}\supset I_\tau(\tau).\]

 For $s\in \SCC$, one sets \[F_s=T_s+Q_s^T\in \AC(T_\C)\index{$F_s$}.\]

Let $x\in I_\tau(\tau)$. Define $\Upsilon_x\in \End(I_\tau)$ by $\Upsilon(h.\vb_\tau)=h.x$, for $h\in \AC_\C$. Then it is easy to check that $\Upsilon:I_\tau(\tau)\rightarrow \End(I_\tau)$ is well defined and is an isomorphism of vector spaces.

 Let $w\in W^v$. Let $w=s_1\ldots s_r$ be a reduced expression of $w$, with $k=\ell(w)$ and $s_1,\ldots,s_k\in \SCC$. Set \[F_w=F_{s_r}\ldots F_{s_1}=(T_{s_r}+Q_{s_r}^T)\ldots (T_{s_1}+Q_{s_1}^T)\in \AC(T_\C).\]\index{$F_w$} 

\begin{Lemma}\label{lemReeder 4.3}(see \cite[Lemma 4.14]{hebert2018principal}) Let $w\in W^v$. \begin{enumerate}

\item\label{itWell_definedness_Fw} The element $F_w\in \AC(T_\C)$ is well defined, i.e it does not depend on the choice of a reduced expression for $w$.

\item\label{itLeading_coefficient_Fw} $F_w-T_w\in \ATF^{<w}=\bigoplus_{v<w}T_v\C(Y)$.

\item\label{itCommutation_relation} If $\theta\in \C(Y)$, then $\theta*F_w=F_w*{^{w^{-1}}}\theta$.

\item\label{itDomain_Fw} If $\tau\in T_\C$ is such that   $F_w\in \AC(T_\C)_\tau$, then $\theta.F_w(\tau).\vb_\tau =\left(w.\tau(\theta)\right) F_w(\tau).\vb_\tau$, for every $\theta\in \C[Y]$.

\end{enumerate}
\end{Lemma}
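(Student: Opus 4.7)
My plan is to establish part~(\ref{itCommutation_relation}) first, as it provides the main computational tool for the other three claims. For the base case with a single simple reflection $s\in\SCC$, I would verify $\theta * F_s = F_s * {^s\theta}$ by direct expansion: starting from $\theta * F_s = \theta * T_s + \theta * Q_s^T$, one uses the defining commutation of $\ATF$ to move $\theta$ past $T_s$, producing a correction term proportional to $Q_s^T(\theta - {^s\theta})$ which combines with the remaining $\theta * Q_s^T$ to give exactly $F_s * {^s\theta}$. The general case then follows by induction on $\ell(w)$: given a reduced expression $w = s_1 \cdots s_r$, one peels off the factors $F_{s_i}$ from the product defining $F_w$ one at a time, applying the single-generator commutation at each step so that $\theta$ is successively replaced by its conjugate under the corresponding simple reflection; after $r$ steps the accumulated action yields precisely ${^{w^{-1}}\theta}$.

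Once part~(\ref{itCommutation_relation}) is in hand, part~(\ref{itWell_definedness_Fw}) can be obtained by a uniqueness argument that sidesteps a direct braid-relation check. Suppose $F, F' \in \ATF$ both satisfy the leading-term property~(\ref{itLeading_coefficient_Fw}) and the commutation~(\ref{itCommutation_relation}). Their difference $X := F - F'$ lies in $\ATF^{<w}$ and also satisfies $\theta * X = X * {^{w^{-1}}\theta}$ for every $\theta \in \C(Y)$. Writing $X = \sum_{v<w} T_v \psi_v$ with $\psi_v \in \C(Y)$ and expanding both sides of this intertwining identity in the $T_v$-basis using the Bernstein-Lusztig commutation relations, one obtains a triangular system of equations on the $\psi_v$'s indexed by the Bruhat order; successively inspecting maximal-Bruhat coefficients forces each $\psi_v = 0$. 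Hence $F_w$ is characterised by~(\ref{itLeading_coefficient_Fw}) and~(\ref{itCommutation_relation}) and is, in particular, independent of the chosen reduced expression. Alternatively one can verify the braid identities $F_s F_t F_s \cdots = F_t F_s F_t \cdots$ directly by reduction to a rank-two computation for each pair $s, t\in \SCC$ with $m_{st} < \infty$.

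For part~(\ref{itLeading_coefficient_Fw}) I would induct on $\ell(w)$. Writing $F_w$ as the product of an $F_s$ (with $s$ a simple reflection taken from the appropriate end of a reduced expression) and an $F_{w'}$ with $\ell(w') = \ell(w)-1$, and invoking the inductive hypothesis $F_{w'} = T_{w'} + X$ with $X \in \ATF^{<w'}$, one expands $F_w$ as a sum of four terms. The dominant summand equals $T_w$ thanks to the choice of reduced expression (the lengths add, so the two $T$-factors multiply into a single $T$); the $Q_s^T$-containing summand built from $T_{w'}$ is a scalar multiple of $T_{w'}$, hence lies in $\ATF^{<w}$; and the two remaining summands are handled by applying the standard Hecke relations to $T_s T_v$ for $v < w'$ and then pushing all $\C(Y)$-coefficients through the $T_u$'s via Bernstein-Lusztig, producing elements of $\bigoplus_{u < w} T_u \C(Y)$.

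Finally, part~(\ref{itDomain_Fw}) is an immediate consequence of~(\ref{itCommutation_relation}). Under the hypothesis $F_w \in \ATF_\tau$, any $\theta \in \C[Y] \subset \C(Y)_\tau$ allows us to apply $\ev_\tau$ to the identity $\theta * F_w = F_w * {^{w^{-1}}\theta}$, giving $\theta \cdot F_w(\tau) = F_w(\tau) \cdot ({^{w^{-1}}\theta})(\tau)$ inside $\HCW$. Since $({^{w^{-1}}\theta})(\tau) = \tau(w^{-1}.\theta) = (w.\tau)(\theta)$ is a scalar, acting on $\vb_\tau$ yields the desired identity. The main obstacle in the plan is the triangular coefficient-matching in the uniqueness argument for~(\ref{itWell_definedness_Fw}) (equivalently, the direct braid-relation verification), where the interplay between the quadratic Hecke relations and the rational function $Q_s^T$ becomes decisive.
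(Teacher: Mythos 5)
The paper does not prove this lemma here: it is quoted from \cite[Lemma 4.14]{hebert2018principal}, so there is no in-text proof to compare against line by line. Judged on its own terms, your plan is correct, and for part~(\ref{itWell_definedness_Fw}) it takes a genuinely different route from the one used in the cited source and mirrored in Section~\ref{secDescription_Itg} of this paper for the analogous elements $\tKC_w$: there, well-definedness is obtained by verifying the braid relations $(F_rF_s)^{*m}=(F_sF_r)^{*m}$ via a reduction to a rank-two root generating system, whereas you characterise $F_w$ as the unique element of $T_w+\ATF^{<w}$ intertwining $\theta\mapsto{^{w^{-1}}\theta}$. Your triangular argument does work: if $X=\sum_{v<w}T_v\psi_v$ satisfies $\theta*X=X*{^{w^{-1}}\theta}$ for all $\theta$, then comparing the $T_v$-coefficient for $v$ maximal in the support gives $\psi_v\,({^{v^{-1}}\theta}-{^{w^{-1}}\theta})=0$, and since $W^v$ acts faithfully on $Y$ (hence on $\C(Y)$) and $v\neq w$, this forces $\psi_v=0$. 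What your approach buys is that it avoids the rank-two computation entirely; what the braid-relation approach buys is that it does not presuppose parts~(\ref{itLeading_coefficient_Fw}) and~(\ref{itCommutation_relation}) for both competing products, and it is the template the author reuses later where no such clean intertwining characterisation is available.

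Three small points you should make explicit. In part~(\ref{itLeading_coefficient_Fw}), showing that $T_s*T_v$ for $v<w'$ stays in $\ATF^{<w}$ uses the Bruhat lifting property ($v\le w'$ and $sw'>w'$ imply $sv\le w$, and a length count rules out $sv=w$); ``standard Hecke relations'' alone do not give $sv<w$. In part~(\ref{itWell_definedness_Fw}), record that the faithfulness of the $W^v$-action on $\C(Y)$ is what makes the maximal-coefficient step conclusive. In part~(\ref{itDomain_Fw}), note that $\ev_\tau$ is only a morphism of $\HFW$--$\C(Y)_\tau$-bimodules, not of left $\C[Y]$-modules, so $\ev_\tau(\theta*F_w)$ is not $\theta*F_w(\tau)$; the clean way is to evaluate the right-hand side, $\ev_\tau(F_w*{^{w^{-1}}\theta})=F_w(\tau)\,\tau({^{w^{-1}}\theta})$, and to check separately that $\theta.\big(F_w(\tau).\vb_\tau\big)=\ev_\tau(\theta*F_w).\vb_\tau$, which follows from the module axioms in $I_\tau$. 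None of these affects the correctness of the plan.
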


We set \[\UC_\C=\{\tau\in T_\C|\tau(\zeta_{r}^{\mathrm{num}})\neq 0\forall r\in \RCC\}.\] When $\sigma_s=\sigma_s'=\sqrt{q}$, for $q\in \R_{>0}$, $\UC_\C=\{\tau\in T_\C|\tau(\alpha^\vee)\neq q,\forall \alpha^\vee\in \Phi^\vee\}$.

\medskip

  By \cite[Lemma 5.7]{hebert2021decompositions}, if $w_R\in R_\tau$, then $F_{w_R}\in \ATC_\tau$ and $F_{w_R}(\tau).\vb_\tau\in I_\tau(\tau)$. Let $\psi_{w_R}=\Upsilon _{F_{w_R}(\tau).\vb_\tau}\in \End(I_\tau)$.

 Set \begin{equation}\label{eqDefinition_Itg}
\Itg= \left(\AC_\C \cap \bigoplus_{w\in \Wta} F_w *\C(Y)\right).\vb_\tau.
\end{equation} which is well defined by \cite[Lemma 5.23]{hebert2018principal}. \[  \sum_{k\in \N,s_1,\ldots, s_k\in \SCC_\tau} \C\ev_\tau(\KC_{s_1}*\ldots*\KC_{s_k}).\vb_\tau,\] By  \cite[Proposition 5.13 (1)]{hebert2021decompositions}, if $\tau\in \UC_\C$, we have   \begin{equation}\label{eqHeb21_5.12}
I_\tau(\tau,\mathrm{gen})=\bigoplus_{w_R\in R_\tau} \psi_{w_R}\big(\Itg\big).
 \end{equation}

\section{Description of  $\Itg$}\label{secDescription_Itg}

In this paper, we prove that if $\tau\in \UC_\C$,  then $I_\tau(\tau,\Wta):=\Itg\cap I_\tau(\tau)=\C \vb_\tau$ (see Theorem~\ref{thmWeight_space}) and we deduce information on the submodules of $I_\tau$. To that end,
we begin by describing $\Itg$.  For $s\in \SCC_\tau$, let \begin{equation}\label{eqDef_Ks}
\KC_s=F_s-\zeta_s=F_s+Q^T_s-\sigma_s^2\in \ATC.
\end{equation} In \cite{hebert2021decompositions} and \cite{hebert2021decompositions}, $\Itg$  is described in terms of the $\ev_\tau(\KC_{s_1}*\ldots*\KC_{s_k}).\vb_\tau$, where $s_1,\ldots,s_k\in \SCC_\tau$. However,  if $w\in \Wta$ and $w=s_1\ldots s_k=s_1'\ldots s_k'$ is a reduced expression of $w$, with $k=\ell_\tau(w)$ and $s_1,\ldots,s_k,s_1',\ldots,s_k'\in \SCC_\tau$, we might have $\KC_{s_1}\ldots \KC_{s_k}\neq \KC_{s_1'}\ldots \KC_{s_k'}$. We thus slightly modify the $\KC_s$ and define $\tKC_s$, for $s\in \SCC$, so that we have $\tKC_{s_1}\ldots \tKC_{s_k}= \tKC_{s_1'}\ldots \tKC_{s_k'}$. This enables us to define $\tKC_w$, for $w\in \Wta$. Let $\KCC_\tau=\bigoplus_{w\in \Wta} \tKC_w *\C(Y)_\tau$. We prove that $\KCC_\tau$ has a presentation very close to the Bernstein-Lusztig presentation of $\AC_\C$. 

In order to study $\Itg$, it is then convenient to describe it as a $\KCC_\tau$-module. However,  $\KCC_\tau$ is not contained in  $\AC_\C$. We thus extend the action of $\KCC_\tau\cap \AC_\C$ on $\Itg$ to an action of $\KCC_\tau$ on $\Itg$.

In subsection~\ref{subBLH_structure_K}, we study $\KCC_\tau$ and in subsection~\ref{subAction_KC_Itau}, we define an action of $\KCC_\tau$ on $\Itg$. In subsection~\ref{subExample_infinite_basis}, we prove that there can exist $\tau\in \UC_\C$ for which $\SCC_\tau$ is infinite.

\subsection{Bernstein-Lusztig-Hecke structure on $\KCC_\tau$}\label{subBLH_structure_K}

 For $s\in \SCC_\tau$, we set  \begin{equation}\label{eqDefinition_tKC}
\tKC_s=\KC_s+\sigma_s^2=F_s+Q_s^T\in \ATC
\end{equation}\index{$\tKC_s$} Note that if $\tau(\lambda)=1$ for all $\lambda\in Y$, we have $\SCC_\tau=\SCC$ and 
\begin{equation}\label{eqK)T_if_tau=1}
\tKC_s=T_s
\end{equation} for $s\in \SCC$.  

Let $s\in \SCC_\tau$.  For $\theta\in \C(Y)$, set $\tilde{\Omega}_s(\theta)=Q_s^T(\theta-{^s\theta})$. Then similarly as in Remark~\ref{remIH algebre dans le cas KM deploye}~\eqref{itPolynomiality_Bernstein_Lusztig} and \cite[Lemma 5.22]{hebert2018principal} we have \begin{equation}\tilde{\Omega}_s(\C[Y])\subset \C[Y]\text{ and }\tilde{\Omega}_s\big(\C(Y)_\tau\big)\subset \C(Y)_\tau.\end{equation}

The aim of this subsection is to prove the following proposition.

\begin{Proposition}\label{propBL_relations}
Let $\tau\in T_\C$.
\begin{enumerate}
\item Let $w\in \Wta$ and $w=s_1\ldots s_k$ be a reduced decomposition of $w$, with $k=\ell_\tau(w)$ and $s_1\ldots,s_k\in \SCC_\tau$. Then  \[\tKC_{w}=\tKC_{s_1}*\ldots*\tKC_{s_k}\in \ATC_\tau\]\index{$\tKC_w$} is well defined, independently of the choice of the reduced decomposition.

\item\label{itHecke_relations} Let  $s\in \SCC_\tau$ and $w\in \Wta$. Then    $\tKC_{s}*\tKC_{w}=\left\{\begin{aligned} & \tKC_{sw} &\mathrm{\ if\ }\ell_\tau(sw)=\ell(w)+1\\ & (\sigma_{s}^2-1) \tKC_{w}+\sigma_s^2 \tKC_{s w} &\mathrm{\ if\ }\ell_\tau(sw)=\ell_\tau(w)-1 .\end{aligned}\right . \ $

\item\label{itBL_relations}  For $\theta\in \C(Y)$ and $s\in \SCC_\tau$, we have \begin{equation}\label{eqBL_relation}
\theta*\tKC_s=\tKC_s*{^s\theta}+\tilde{\Omega}_s(\theta).
\end{equation}

\item The matrix $\big(\alpha_s(\alpha_r^\vee)\big)_{r,s\in \SCC_\tau}$ is a (possibly infinite) Kac-Moody matrix.

\item\label{itAlgebra_K} If $\tau\in \UC_\C$,  the space \[\KCC_\tau=\sum_{k\in \N, s_1\ldots,s_k\in \SCC_\tau} \KC_{s_1}*\ldots *\KC_{s_k}*\C(Y)_\tau=\bigoplus_{w\in \Wta}\tKC_w*\C(Y)_\tau\] is a subalgebra of $\ATC$ contained in  $\ATC_\tau$.

\end{enumerate}
\end{Proposition}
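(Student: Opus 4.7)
My plan is to establish the five parts in the order (3), (4), (1)--(2), (5). Parts (3) and (4) supply the local algebraic and combinatorial tools; parts (1) and (2) amount to verifying that the $\tKC_s$ satisfy the defining relations of a Bernstein-Lusztig-Hecke algebra attached to the Coxeter system $(\Wta,\SCC_\tau)$; part (5) then packages everything together.

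Part (3) is a short direct computation. Writing $\tKC_s = F_s + Q_s^T$ and using Lemma~\ref{lemReeder 4.3}(\ref{itCommutation_relation}) (which gives $\theta * F_s = F_s * {^s\theta}$) together with the commutativity of $\C(Y)$, one finds $\theta * \tKC_s - \tKC_s * {^s\theta} = Q_s^T(\theta - {^s\theta}) = \tilde{\Omega}_s(\theta)$. For part (4), the diagonal condition $\alpha_s(\alpha_s^\vee)=2$ is built into the definitions and the symmetry $\alpha_s(\alpha_r^\vee)=0\Leftrightarrow \alpha_r(\alpha_s^\vee)=0$ is equivalent to the commutation of $r$ and $s$ in $W^v$, which is classical. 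The non-positivity $\alpha_s(\alpha_r^\vee)\le 0$ for $r\ne s$ in $\SCC_\tau$ is the substantive point; it follows from the structure of $\SCC_\tau$ as the canonical simple generating set of the reflection subgroup $\Wta\subset W^v$ (Dyer--Deodhar theory), together with the explicit description of $\SCC_\tau$ established in \cite{hebert2018principal}.

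For (1) and (2), my approach is to verify both the quadratic relation $\tKC_s^2 = (\sigma_s^2-1)\tKC_s + \sigma_s^2$ and the braid relations $\underbrace{\tKC_s\tKC_t\cdots}_{m_{s,t}\text{ factors}} = \underbrace{\tKC_t\tKC_s\cdots}_{m_{s,t}\text{ factors}}$ for $s,t\in\SCC_\tau$ with $m_{s,t}<\infty$; Matsumoto's theorem applied to $(\Wta,\SCC_\tau)$ then gives that $\tKC_{s_1}*\cdots*\tKC_{s_k}$ depends only on $w=s_1\cdots s_k$, which is (1), and (2) is immediate from (1) combined with the quadratic relation. The quadratic relation is a direct computation combining the standard identity $F_s^2=\zeta_s\cdot{^s\zeta_s}$ for the intertwiner, the sum rule $Q_s^T+{^s}Q_s^T=\sigma_s^2-1$, and the commutation $Q_s^T*F_s=F_s*{^s}Q_s^T$ coming from Lemma~\ref{lemReeder 4.3}(\ref{itCommutation_relation}). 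The braid relations are the main technical obstacle and are exactly what motivates the passage from $\KC_s$ to $\tKC_s$ emphasized in the paragraph before Proposition~\ref{propBL_relations}; the plan is to reduce to the rank-2 parabolic $\langle s,t\rangle\subset\Wta$, which by (4) is a dihedral Coxeter group attached to a $2\times 2$ Kac-Moody matrix of finite type, and to verify the braid relation either by direct computation in $\ATC$ using (3) and the quadratic relation, or by identifying $\langle s,t\rangle_\tau$ with the standard Bernstein-Lusztig-Hecke algebra of this rank-2 datum.

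Finally for (5), the subalgebra property is a formal consequence of (1), (2), and (3): a product $(\tKC_w*f)*(\tKC_{w'}*g)$ is rewritten by repeatedly applying (3) to push $f$ past $\tKC_{w'}$ (using that $\tilde{\Omega}_s$ preserves $\C(Y)_\tau$), and then the resulting $\tKC_w*\tKC_{w'}$ is expanded in the basis $\{\tKC_v\}_{v\in\Wta}$ via (1) and (2) with scalar coefficients. The containment $\KCC_\tau\subset\ATC_\tau$ is the most delicate point: although an individual $\tKC_s$ has poles at $\tau$ in its $T_w$-expansion coming from $Q_s^T$, I prove by induction on $\ell_\tau(w)$ that every element of $\KCC_\tau$ has all its $T_v$-coefficients in $\C(Y)_\tau$; the inductive step uses the factorization $\tKC_w=\tKC_s*\tKC_{s^{-1}w}$ with $\ell_\tau(s^{-1}w)=\ell_\tau(w)-1$ and tracks how the poles in $\tKC_s$ combine with those in $\tKC_{s^{-1}w}$ to produce an element with coefficients regular at $\tau$.
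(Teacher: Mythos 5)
Your overall architecture matches the paper's: part (3) by the same one-line computation from Lemma~\ref{lemReeder 4.3}, part (4) via the Deodhar-style description of $\SCC_\tau$ as the set of reflections whose coroots are the $\leq_\tau$-minimal elements of $\Phi^\vee_{(\tau),+}$, parts (1)--(2) by verifying the quadratic relation (using $F_s^2=\zeta_s\cdot{^s\zeta_s}$, exactly as in the paper) together with the rank-$2$ braid relations and invoking the word property, and part (5) by formal manipulation. However, there is a genuine gap at the single place where the real work lies: the verification of the braid relation $(\tKC_r*\tKC_s)^{*m}=(\tKC_s*\tKC_r)^{*m}$ for $r,s\in\SCC_\tau$. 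You offer this as an ``either/or'' between a direct computation and an identification of $\langle r,s\rangle$ with a standard rank-$2$ Bernstein--Lusztig datum, but neither option is carried out, and the second one is not straightforward: $r$ and $s$ are in general \emph{not} simple reflections of $W^v$, so $F_r$ and $F_s$ are products of $\ell(r)$ and $\ell(s)$ factors and there is no literal embedding of the rank-$2$ algebra into $\ATC$. The paper's actual argument is a specialization trick: it writes the difference of the two products as $\sum_u F_u*P_u$ where the $P_u$ are \emph{universal} polynomials in the Weyl translates of $Q_r^T$ and $Q_s^T$, then evaluates these same polynomials in an auxiliary rank-$2$ root generating system built from the Cartan matrix $A_{r,s}$ at the trivial character $\tau'=\mathds{1}$, where $\tKC'_t=T'_t$ and the braid relation holds trivially; this forces $P_u=0$ identically, and a morphism $\iota$ transports the vanishing back. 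Your sketch does not contain this idea, and without it (or an equivalent device) the braid relations are unproved.

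A second, smaller gap concerns part (5): the statement asserts not only that $\KCC_\tau$ is a subalgebra contained in $\ATC_\tau$ but that the sum $\sum_{w\in\Wta}\tKC_w*\C(Y)_\tau$ is \emph{direct}, i.e.\ that $(\tKC_w)_{w\in\Wta}$ is free over $\C(Y)_\tau$. Your proposal never addresses this, and it is precisely here that the hypothesis $\tau\in\UC_\C$ enters: one uses the triangularity $\tKC_{s_1}*\cdots*\tKC_{s_k}=F_w+\sum_{v<_\tau w}F_v*\theta_v$ together with $\max\supp(\tKC_w)=\{w\}$ (which requires $\tau\in\UC_\C$, cf.\ \cite[Lemma 5.24]{hebert2018principal}) to conclude freeness. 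Since your write-up of (5) makes no use of $\tau\in\UC_\C$ at all, this omission is a sign that a needed step is missing rather than merely elided. Your pole-cancellation induction for $\KCC_\tau\subset\ATC_\tau$ is the right idea and amounts to the content of \cite[Lemma 5.23]{hebert2018principal}, which the paper simply cites.
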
 

In particular, $\bigoplus_{w\in \Wta} \tKC_w*\C(Y)_\tau$ is almost a Bernstein-Lusztig-Hecke algebra as defined in subsection~\ref{subIH algebras}. Note however that $\SCC_\tau$ can be infinite (and thus the $(\alpha_s)_{s\in \SCC_\tau}$ and  $(\alpha_{s}^\vee)_{s\in \SCC_\tau}$ are not  free), see Lemma~\ref{lemExample_infinite_SCC_tau}. The proof of this proposition is  a consequence of the lemmas of this subsection below.

\begin{Lemma}
Let $s\in \SCC_\tau$. Then: \begin{enumerate}
\item $\KC_s^2=-(1+\sigma_s^2)\KC_s$,

\item $\tKC_s^2=(\sigma_s^2-1)\tKC_s+\sigma_s^2$.
\end{enumerate} 
\end{Lemma}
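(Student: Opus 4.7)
The first step is to observe that the two identities are equivalent via the relation $\tKC_s = \KC_s + \sigma_s^2$: expanding $\KC_s^2 = (\tKC_s - \sigma_s^2)^2 = \tKC_s^2 - 2\sigma_s^2\tKC_s + \sigma_s^4$ and substituting the second identity, the right-hand side simplifies to $-(1+\sigma_s^2)\tKC_s + \sigma_s^2(1+\sigma_s^2) = -(1+\sigma_s^2)(\tKC_s - \sigma_s^2) = -(1+\sigma_s^2)\KC_s$. So I reduce to proving $\tKC_s^2 = (\sigma_s^2-1)\tKC_s + \sigma_s^2$.

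My plan is to write $\tKC_s = F_s + Q_s^T$ and expand
\[
\tKC_s^2 = F_s^2 + F_s * Q_s^T + Q_s^T * F_s + (Q_s^T)^2.
\]
The intertwining relation of Lemma~\ref{lemReeder 4.3}(\ref{itCommutation_relation}), applied with $\theta = Q_s^T$, gives $F_s * Q_s^T = {}^s(Q_s^T) * F_s$, so the cross terms collapse to $(Q_s^T + {}^s(Q_s^T)) * F_s$. A short direct computation with the explicit formula for $Q_s^T$ -- placing both terms over the common denominator $1 - Z^{-2\alpha_s^\vee}$ and simplifying the numerator -- yields $Q_s^T + {}^s(Q_s^T) = \sigma_s^2 - 1$. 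Thus the cross terms contribute $(\sigma_s^2 - 1) F_s$.

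The remaining ingredient is an explicit evaluation of $F_s^2$. Applying the intertwining relation twice forces $F_s^2$ to commute with all of $\C(Y)$, so in the $\bigoplus_v T_v * \C(Y)$-decomposition the $T_s$-coefficient must vanish. Combined with the direct expansion of $(T_s + Q_s^T)^2$ via $T_s^2 = (\sigma_s^2 - 1) T_s + \sigma_s^2$ and the Bernstein--Lusztig commutation, this pins down
\[
F_s^2 = \sigma_s^2 + (\sigma_s^2 - 1) Q_s^T - (Q_s^T)^2,
\]
which is the classical Lusztig product formula $F_s^2 = \zeta_s * {}^s\zeta_s$ after writing $\zeta_s = \sigma_s^2 - Q_s^T$ and ${}^s\zeta_s = 1 + Q_s^T$ (using $Q_s^T + {}^s Q_s^T = \sigma_s^2 - 1$). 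Plugging everything in,
\[
\tKC_s^2 = \bigl[\sigma_s^2 + (\sigma_s^2-1) Q_s^T - (Q_s^T)^2\bigr] + (\sigma_s^2 - 1) F_s + (Q_s^T)^2 = \sigma_s^2 + (\sigma_s^2 - 1)(F_s + Q_s^T) = (\sigma_s^2 - 1) \tKC_s + \sigma_s^2.
\]

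The main obstacle will be the verification of the product formula for $F_s^2$; every other step is routine bookkeeping. If the formula $F_s^2 = \zeta_s * {}^s\zeta_s$ is not already available from \cite{hebert2018principal}, then it must be established by direct expansion in $\ATC$, and the forced vanishing of the $T_s$-coefficient dictated by the intertwining relation provides a useful internal consistency check for that computation.
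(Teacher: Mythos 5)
Your proof is correct and is essentially the paper's own argument: the paper likewise expands the square, invokes $F_s^2=\zeta_s\cdot{}^s\zeta_s$ from \cite[Lemma 4.3]{hebert2018principal} together with the intertwining relation of Lemma~\ref{lemReeder 4.3}, and concludes via the scalar identity $\zeta_s+{}^s\zeta_s=1+\sigma_s^2$ (equivalently your $Q_s^T+{}^sQ_s^T=\sigma_s^2-1$). The only cosmetic difference is that you prove the identity for $\tKC_s$ and transfer it to $\KC_s$ by the shift $\tKC_s=\KC_s+\sigma_s^2$, while the paper computes $\KC_s^2=(F_s-\zeta_s)^2$ directly.
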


\begin{proof}
By Lemma~\ref{lemReeder 4.3}, \eqref{eqDef_Ks} and \cite[Lemma 4.3]{hebert2018principal}, we have \[\begin{aligned}\KC_s^2 &=(F_s-\zeta_s)(F_s-\zeta_s)\\
 &=F_s^2-F_s(\zeta_s+{^s\zeta_s})+\zeta_s.{^s\zeta_s}\\ 
&=\zeta_s.({^s\zeta_s})-F_s(\zeta_s+{^s\zeta_s})+\zeta_s.{{^s\zeta_s}}\\
 &=(-F_s+\zeta_s)(\zeta_s+{^s\zeta_s})=-(1+\sigma_s^2)\KC_s.\end{aligned}\]
\end{proof}

\begin{Lemma}\label{lemBernstein_lusztig_relations}
Let $\theta\in \C(Y)$ and $s\in \SCC_\tau$. Then \[\theta*\tKC_s=\tKC_s*{^s\theta}+\tilde{\Omega}_s(\theta).\]
\end{Lemma}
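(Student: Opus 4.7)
The plan is to derive the identity directly from the commutation relation for $F_s$ that is already recorded in Lemma~\ref{lemReeder 4.3}\eqref{itCommutation_relation}: for every $\theta\in\C(Y)$ and $s\in\SCC$, one has $\theta*F_s=F_s*{^s\theta}$. The key observation is that by the definition \eqref{eqDefinition_tKC}, $\tKC_s=F_s+Q_s^T$, and $Q_s^T\in\C(Y)$, which is a commutative subalgebra of $\ATC$. Thus the ``non-commutative part'' of $\tKC_s$ (with respect to $\C(Y)$) is entirely contained in $F_s$, and we already know how $F_s$ twists elements of $\C(Y)$.

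Concretely, I would write
\begin{align*}
\theta*\tKC_s-\tKC_s*{^s\theta}
&= \theta*F_s+\theta\cdot Q_s^T - F_s*{^s\theta}- Q_s^T\cdot{^s\theta}\\
&= \bigl(\theta*F_s-F_s*{^s\theta}\bigr)+Q_s^T\cdot(\theta-{^s\theta}).
\end{align*}
The first bracket vanishes by Lemma~\ref{lemReeder 4.3}\eqref{itCommutation_relation} applied to $w=s$, and the remaining term equals $\tilde{\Omega}_s(\theta)$ by the very definition $\tilde{\Omega}_s(\theta)=Q_s^T(\theta-{^s\theta})$ recalled just before the statement.

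I do not foresee any real obstacle: the argument is a two-line manipulation. The only small subtlety is that one uses at two different places the fact that $Q_s^T$ lies in the commutative subring $\C(Y)\subset\ATC$ (so that $\theta\cdot Q_s^T=Q_s^T\cdot\theta$ and $Q_s^T\cdot{^s\theta}={^s\theta}\cdot Q_s^T$), and that the multiplication by $Q_s^T\in\C(Y)$ inside $\ATC$ is the same whether one thinks of it as multiplication in $\C(Y)$ or as the associative product $*$ restricted to $\C(Y)$. Note that no use is made of the hypothesis $s\in\SCC_\tau$ beyond having a well-defined $Q_s^T$, so the computation is valid for any simple reflection; the restriction to $\SCC_\tau$ is only relevant when one wants to stay inside $\ATC_\tau$, which follows from $Q_s^T\in\C(Y)_\tau$ when $s\in\SCC_\tau$.
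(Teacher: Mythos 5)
Your proof is correct and is essentially identical to the paper's own argument: both reduce the identity to the commutation relation $\theta*F_s=F_s*{^s\theta}$ of Lemma~\ref{lemReeder 4.3}~\eqref{itCommutation_relation} together with the fact that $Q_s^T$ lies in the commutative subalgebra $\C(Y)$, so that $\theta*Q_s^T=Q_s^T*{^s\theta}+Q_s^T(\theta-{^s\theta})=Q_s^T*{^s\theta}+\tilde{\Omega}_s(\theta)$. Your closing remark that the hypothesis $s\in\SCC_\tau$ is not needed for the identity itself, only for staying in $\ATC_\tau$, is a correct and worthwhile observation.
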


\begin{proof}
By Lemma~\ref{lemReeder 4.3}, we have \[\begin{aligned}\theta*\tKC_s&=\theta*\left(F_s+Q_s^T\right)\\
&=F_s* {^s\theta}+\theta*Q_s^T \\
&=\left(F_s+Q_s^T\right){^s\theta}+Q_s^T(\theta-{^s\theta})\\
&= \tKC_s *{^s\theta} +\tilde{\Omega}_s(\theta).\end{aligned}\] 
\end{proof}

Following \cite{deodhar1989subgroups}, we define $\leq_\tau$ on $\Phi_{(\tau),+}^\vee$ as follows. If $\alpha^\vee,\beta^\vee\in \Phi^\vee_{(\tau),+}$, we write $\alpha^\vee\leq_{\tau} \beta^\vee$ if there exist $k\in \N$, $a\in \R^*_+$,  $\beta_1^\vee,\ldots,\beta_k^\vee\in \Phi_{(\tau),+}^\vee$ and $a_1,\ldots,a_k\in \R_+$ such that $\alpha^\vee= a\beta^\vee+\sum_{i=1}^k a_i \beta_i^\vee$.

Let $s\in \Wta\cap \RCC$. Then by \cite[Lemma 5.13 (2)]{hebert2018principal}, we have \begin{equation}\label{eqCharacterization_S_tau}s\in \SCC_\tau\text { if and only if }s.\big(\Phi_{(\tau),+}^\vee\setminus \{\alpha_s^\vee\}\big)=\Phi_{(\tau),+}^\vee\setminus\{\alpha_s^\vee\}.\end{equation}

\begin{Lemma}\label{lemBasis_root_subsystem}
Let \[\Sigma_\tau=\{\alpha^\vee\in \Phi_{(\tau),+}^\vee|\forall\beta^\vee\in \Phi_{(\tau),+}^\vee, \beta^\vee\leq_\tau \alpha^\vee\Rightarrow \beta^\vee=\alpha^\vee\}.\]  Then the map $s\mapsto \alpha_s^\vee$ from $\SCC_\tau$ to $\Sigma_\tau$ is well defined and is a bijection. Moreover, we have \begin{equation}\label{eqSigma_spans_Phi}\Phi^\vee_{(\tau),+}\subset \sum_{\alpha^\vee\in \Sigma_\tau}\N \alpha^\vee.\end{equation}
\end{Lemma}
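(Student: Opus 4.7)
The plan is to leverage the characterization \eqref{eqCharacterization_S_tau}, which says $s \in \SCC_\tau$ exactly when $s$ permutes $\Phi^\vee_{(\tau),+}\setminus\{\alpha_s^\vee\}$. Injectivity of $s\mapsto\alpha_s^\vee$ is immediate, since $s$ is recovered as $r_{\alpha_s^\vee}$. Both well-definedness (the image lands in $\Sigma_\tau$) and surjectivity are proved by contradiction, combining this permutation property with a height function $\rho\in \A^*$ chosen to be strictly positive on $\Phi^\vee_+$.

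For well-definedness, fix $s\in\SCC_\tau$ and suppose $\beta^\vee\leq_\tau\alpha_s^\vee$ in $\Phi^\vee_{(\tau),+}$, via a decomposition in which (after gathering the $\alpha_s^\vee$ coefficients) all auxiliary $\beta_i^\vee$ are different from $\alpha_s^\vee$. Assuming $\beta^\vee\neq\alpha_s^\vee$, applying $s$ keeps $\beta^\vee$ and each $\beta_i^\vee$ in $\Phi^\vee_{(\tau),+}$ while flipping $\alpha_s^\vee$ to $-\alpha_s^\vee$. Rewriting the resulting identity then expresses a positive multiple of $-\alpha_s^\vee$ as a nonnegative combination of elements of $\Phi^\vee_+$, which contradicts $\rho>0$ on $\Phi^\vee_+$. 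Hence $\beta^\vee=\alpha_s^\vee$ and $\alpha_s^\vee\in\Sigma_\tau$.

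For surjectivity, let $\alpha^\vee\in\Sigma_\tau$; then $r_{\alpha^\vee}\in\RCC_{(\tau)}\subset\Wta$, and I show it satisfies \eqref{eqCharacterization_S_tau}. If instead some $\beta^\vee\in\Phi^\vee_{(\tau),+}\setminus\{\alpha^\vee\}$ satisfies $r_{\alpha^\vee}.\beta^\vee\in-\Phi^\vee_{(\tau),+}$, then setting $\gamma^\vee:=-r_{\alpha^\vee}.\beta^\vee=\alpha(\beta^\vee)\alpha^\vee-\beta^\vee\in\Phi^\vee_{(\tau),+}$ (with $\alpha$ the real root dual to $\alpha^\vee$), evaluation of $\rho$ forces $\alpha(\beta^\vee)>0$. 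Solving gives $\alpha^\vee=\tfrac{1}{\alpha(\beta^\vee)}\beta^\vee+\tfrac{1}{\alpha(\beta^\vee)}\gamma^\vee$, exhibiting $\beta^\vee\leq_\tau\alpha^\vee$; since $\beta^\vee\neq\alpha^\vee$, this contradicts the minimality of $\alpha^\vee$ in $\Sigma_\tau$.

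For the spanning statement \eqref{eqSigma_spans_Phi}, the key input is that $(\Wta,\SCC_\tau)$ is a Coxeter system whose associated (possibly infinite) Kac-Moody matrix is $\bigl(\alpha_s(\alpha_r^\vee)\bigr)_{r,s\in\SCC_\tau}$ (part 4 of Proposition~\ref{propBL_relations}). In the Tits representation of this Coxeter system, every reflection carries a unique positive root given as a nonnegative integer combination of the abstract simple coroots. The assignment $\tilde\alpha_s^\vee\mapsto\alpha_s^\vee$ extends to a $\Wta$-equivariant linear map from that Tits representation to $\A$, transporting the integer expression into the actual coroot lattice. The main obstacle is to match the abstract positive root of a reflection $r\in\RCC_{(\tau)}$ with the true positive coroot $\beta^\vee\in\Phi^\vee_{(\tau),+}$ (rather than $-\beta^\vee$): this requires that for $r=wsw^{-1}$ with $s\in\SCC_\tau$ and $\ell_\tau(ws)>\ell_\tau(w)$, the vector $w.\alpha_s^\vee$ is positive in the original system. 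This follows from the compatibility $w\leq_\tau w'\Rightarrow w\leq w'$ of \cite[Lemma 5.10]{hebert2018principal} together with the standard length-based characterization of positive roots in a Coxeter system, and is essentially Deodhar's theorem \cite{deodhar1989subgroups}.
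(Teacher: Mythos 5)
Your proof of the bijection $\SCC_\tau\to\Sigma_\tau$ is essentially the paper's argument (both follow Deodhar's Step~2). The surjectivity direction is the same computation with $\gamma^\vee=-r_{\alpha^\vee}.\beta^\vee$, and for well-definedness you apply $s$ to a decomposition witnessing $\beta^\vee\leq_\tau\alpha_s^\vee$ and reach a sign contradiction; the paper concludes instead by noting that $s.\beta^\vee\in\Phi^\vee_-\cap\Phi^\vee_{(\tau)}$ contradicts \eqref{eqCharacterization_S_tau} unless $\beta^\vee=\alpha_s^\vee$, while you use a linear functional positive on $\Phi^\vee_+$ --- equivalent arguments. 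You are in fact slightly more careful than the paper in first gathering the $\alpha_s^\vee$-terms among the $\beta_i^\vee$, a step the paper passes over silently.

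Where you genuinely diverge is \eqref{eqSigma_spans_Phi}. The paper disposes of it in one line by invoking Deodhar's Step~3, an induction using the integrality of the pairings $\alpha(\beta^\vee)$ for $\alpha^\vee,\beta^\vee\in\Phi^\vee_{(\tau)}$. You instead pass through the Tits representation of the Coxeter system $(\Wta,\SCC_\tau)$ attached to the generalized Cartan matrix $\big(\alpha_s(\alpha_r^\vee)\big)_{r,s\in\SCC_\tau}$ and push the abstract positive root of a reflection down to $\A$ via a $\Wta$-equivariant map. This works, and your resolution of the sign issue (from $w<_\tau ws$ one gets $w<ws$ by \cite[Lemma 5.10]{hebert2018principal}, hence $\ell(ws)>\ell(w)$, hence $w.\alpha_s^\vee\in\Phi^\vee_+$) is correct, but you leave two dependencies implicit. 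First, you need every $\beta^\vee\in\Phi^\vee_{(\tau),+}$ to be of the form $w.\alpha_s^\vee$ with $w\in\Wta$ and $s\in\SCC_\tau$, i.e.\ that every element of $\RCC_{(\tau)}$ is a reflection of the Coxeter system $(\Wta,\SCC_\tau)$; this is part of Deodhar's theorem and of the construction in \cite[5.4]{hebert2018principal}, but it is not free and should be cited. Second, you invoke Proposition~\ref{propBL_relations}~(4), which the paper proves (Lemma~\ref{lemKac-Moody_matrix}) \emph{using} the bijection part of the present lemma; since you establish that part first and independently, there is no circularity, but the ordering must be made explicit. On balance your route gives a cleaner conceptual picture (the coefficients in \eqref{eqSigma_spans_Phi} are literally the coordinates of an abstract real coroot), at the cost of importing more reflection-subgroup machinery than the paper's elementary height induction requires.
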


\begin{proof}
This follows \cite[3. Step 2]{deodhar1989subgroups}.

Let $\alpha^\vee\in \Sigma_\tau$ (if such an element exists).  Let $s=r_{\alpha^\vee}\in \RCC_{(\tau)}$ and $\beta^\vee\in \Phi^\vee_{(\tau),+}$. We assume that $\gamma^\vee:=-s.\beta^\vee\in \Phi^\vee_{(\tau),+}$. Then $\gamma^\vee=-\beta^\vee+\alpha_s(\beta^\vee)\alpha_s^\vee$. As $\beta^\vee\in \Phi^\vee_+$, we necessarily have $\alpha_s(\beta^\vee)\in \N^*$. Therefore $\alpha_s^\vee=\frac{1}{\alpha_s(\beta^\vee)}(\beta^\vee+\gamma^\vee)$ and $\beta^\vee\leq_\tau \alpha_s^\vee$. By definition of $\Sigma_\tau$, we deduce $\beta=\alpha_s^\vee$. By \eqref{eqCharacterization_S_tau} we deduce that $s\in \SCC_\tau$.

 Conversely, let $s\in \SCC_{\tau}$. Let $\beta^\vee\in \Phi^\vee_{(\tau),+}$  be such that $\beta^\vee\leq_\tau \alpha_s^\vee$. By definition, we can write $\alpha_s^\vee = a\beta^\vee+\sum_{i=1}^k a_i \beta_i^\vee$, where $k\in \N$, $a\in \R^*_+$,  $(a_i)\in (\R_+)^k$ and $(\beta_i^\vee)\in \big(\Phi_{(\tau),+}\big)^k$. Then \[s.\beta^\vee=-\frac{1}{a}(\alpha_s^\vee+\sum_{i=1}^k a_is.\beta_i^\vee)\in \Phi^\vee_-\cap\Phi^\vee_{(\tau)}.\]  By \eqref{eqCharacterization_S_tau}, we deduce that $\beta^\vee=\alpha_s^\vee$ and thus $\alpha_s^\vee\in \Sigma_\tau$.    We proved the first part of the lemma. Then \eqref{eqSigma_spans_Phi} can be proved as  \cite[3 step 3]{deodhar1989subgroups}, using the fact that $\alpha(\beta^\vee)\in \Z$, for all $\alpha^\vee,\beta^\vee\in \Phi_{(\tau)}^\vee$.
 
 \end{proof}
 
 \begin{Lemma}\label{lemKac-Moody_matrix}
The matrix $\big(\alpha_s(\alpha_r^\vee)\big)_{r,s\in \SCC_\tau}$ is a Kac-Moody matrix.
 \end{Lemma}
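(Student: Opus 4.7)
The plan is to verify the three defining properties of a Kac-Moody matrix for $M_\tau:=\big(\alpha_s(\alpha_r^\vee)\big)_{r,s\in \SCC_\tau}$: diagonal entries equal to $2$, off-diagonal entries non-positive integers, and symmetric vanishing. The diagonal condition $\alpha_s(\alpha_s^\vee)=2$ is immediate from the definition of a root generating system, and integrality of each $\alpha_s(\alpha_r^\vee)$ follows from the standard fact that $\alpha(\beta^\vee)\in\Z$ whenever $\alpha\in\Phi$ and $\beta^\vee\in\Phi^\vee$ (as used already in the last line of the proof of Lemma~\ref{lemBasis_root_subsystem}). So the content is in the inequality and in the symmetric-zero clause.

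For non-positivity, I would argue by contradiction. Fix distinct $r,s\in\SCC_\tau$, so that $\alpha_r^\vee,\alpha_s^\vee$ are distinct elements of $\Sigma_\tau$ by Lemma~\ref{lemBasis_root_subsystem}. Suppose $\alpha_s(\alpha_r^\vee)>0$. Since $r\neq s$, $\alpha_r^\vee\in \Phi^\vee_{(\tau),+}\setminus\{\alpha_s^\vee\}$, so \eqref{eqCharacterization_S_tau} gives $s.\alpha_r^\vee\in\Phi^\vee_{(\tau),+}$. Rewriting
\[
\alpha_r^\vee \;=\; \alpha_s(\alpha_r^\vee)\,\alpha_s^\vee \;+\; s.\alpha_r^\vee,
\]
with $\alpha_s(\alpha_r^\vee)>0$ and $s.\alpha_r^\vee\in\Phi^\vee_{(\tau),+}$, the definition of $\leq_\tau$ yields $\alpha_s^\vee\leq_\tau\alpha_r^\vee$. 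But $\alpha_r^\vee\in\Sigma_\tau$ then forces $\alpha_s^\vee=\alpha_r^\vee$, contradicting $r\neq s$ via the bijection of Lemma~\ref{lemBasis_root_subsystem}. Hence $\alpha_s(\alpha_r^\vee)\leq 0$.

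For the symmetric-zero condition, suppose $\alpha_s(\alpha_r^\vee)=0$. Then $s.\alpha_r^\vee=\alpha_r^\vee$, so $srs^{-1}=r_{s.\alpha_r^\vee}=r$, i.e.\ $s$ and $r$ commute. Since $r$ commutes with $s$, it preserves the $(-1)$-eigenline of $s$ acting on $\A$, which is $\R\alpha_s^\vee$. Hence $r.\alpha_s^\vee=\pm\alpha_s^\vee$. The equality $r.\alpha_s^\vee=-\alpha_s^\vee$ combined with $r.\alpha_s^\vee=\alpha_s^\vee-\alpha_r(\alpha_s^\vee)\alpha_r^\vee$ would give $2\alpha_s^\vee=\alpha_r(\alpha_s^\vee)\alpha_r^\vee$, forcing $\alpha_s^\vee\in\R\alpha_r^\vee\cap\Phi^\vee=\{\pm\alpha_r^\vee\}$ (by the Kac--Moody fact recalled in Subsection~\ref{subRootGenSyst}), hence $s=r$, contradiction. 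Therefore $r.\alpha_s^\vee=\alpha_s^\vee$, which reads $\alpha_r(\alpha_s^\vee)=0$; the converse is symmetric.

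The main obstacle is step three (non-positivity), but given Lemma~\ref{lemBasis_root_subsystem} and the characterization \eqref{eqCharacterization_S_tau} it becomes a short Deodhar-style argument; no new ingredient beyond those already established in this subsection is needed.
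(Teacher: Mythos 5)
Your proof is correct. For the non-positivity of the off-diagonal entries you argue essentially as the paper does: for distinct $r,s\in\SCC_\tau$ with $\alpha_s(\alpha_r^\vee)>0$, write $\alpha_r^\vee=\alpha_s(\alpha_r^\vee)\,\alpha_s^\vee+s.\alpha_r^\vee$, use \eqref{eqCharacterization_S_tau} to see $s.\alpha_r^\vee\in\Phi^\vee_{(\tau),+}$, and contradict the minimality of the elements of $\Sigma_\tau$ from Lemma~\ref{lemBasis_root_subsystem}. (The direction in which this decomposition gives a $\leq_\tau$-relation between $\alpha_s^\vee$ and $\alpha_r^\vee$ depends on how one reads the definition of $\leq_\tau$, but since both coroots lie in $\Sigma_\tau$ the contradiction is reached either way, exactly as in the paper's own use of this device in Lemma~\ref{lemBasis_root_subsystem}.) For the symmetric-vanishing clause you take a genuinely different route. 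The paper stays inside the Deodhar formalism: assuming $\alpha_r(\alpha_s^\vee)=0$ and $\alpha_s(\alpha_r^\vee)<0$, it computes $rs.\alpha_r^\vee=-\alpha_r^\vee-\alpha_s(\alpha_r^\vee)\alpha_s^\vee$ and, according to the sign of $rs.\alpha_r^\vee$, deduces $\alpha_s^\vee\leq_\tau\alpha_r^\vee$ or $\alpha_r^\vee\leq_\tau\alpha_s^\vee$, contradicting $\alpha_r^\vee,\alpha_s^\vee\in\Sigma_\tau$ in both cases. You instead note that $\alpha_s(\alpha_r^\vee)=0$ forces $s.\alpha_r^\vee=\alpha_r^\vee$, hence $srs^{-1}=r_{s.\alpha_r^\vee}=r$, so $r$ preserves the $(-1)$-eigenline $\R\alpha_s^\vee$ of $s$, and then $\R\alpha_s^\vee\cap\Phi^\vee=\{\pm\alpha_s^\vee\}$ excludes $r.\alpha_s^\vee=-\alpha_s^\vee$. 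This bypasses $\Sigma_\tau$ and $\leq_\tau$ entirely for that step, at the cost of invoking the conjugation formula for reflections and the fact $\R\alpha^\vee\cap\Phi^\vee=\{\pm\alpha^\vee\}$, both available in Subsection~\ref{subRootGenSyst}; it is arguably the more elementary and self-contained of the two arguments. Both proofs are complete.
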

 
 \begin{proof}
 Let $r,s\in\SCC_\tau$.  Suppose $\alpha_r(\alpha_s^\vee)\neq 0$. We have $r.\alpha_s^\vee=\alpha_s^\vee-\alpha_r(\alpha_s^\vee)\alpha_r^\vee$. By Lemma~\ref{lemBasis_root_subsystem}, $r.\alpha_s^\vee \not \leq_\tau \alpha_r^\vee$, which implies that $\alpha_r(\alpha_s^\vee)\leq 0$. 
   
   Suppose $\alpha_r(\alpha_s^\vee)=0$. Then by the case above, $\alpha_s(\alpha_r^\vee)\leq 0$. Suppose that $\alpha_s(\alpha_r^\vee)<0$. Then  \[\alpha_r^\vee=-rs.\alpha_r^\vee-\alpha_s(\alpha_r^\vee)\alpha_s^\vee\text{ and }\alpha_s^\vee=\frac{-1}{\alpha_s(\alpha_r^\vee)}(rs.\alpha_r^\vee+\alpha_r^\vee).\] If $rs.\alpha_r^\vee\in \Phi^\vee_+$, we deduce that $\alpha_s^\vee\leq_{\tau} \alpha_r^\vee$ and if $rs.\alpha_r^\vee$, we deduce that $\alpha_r^\vee\leq_\tau \alpha_s^\vee$. Since $\alpha_r^\vee,\alpha_s^\vee\in \Sigma_\tau$, this implies in both cases that $\alpha_r^\vee=\alpha_s^\vee$: we reach a contradiction.   Therefore $\alpha_s(\alpha_r^\vee)=0$ and the lemma follows.
 \end{proof}

If $a,b$ are two elements of a ring and $m\in \N$, we denote by $(a.b)^{*m}$ the product $a.b.a.\ldots$ with $m$ factors. 

\begin{Lemma}
Let $w\in \Wta$ and $w=r_1\ldots r_k= s_1\ldots s_k$ be two reduced writings of $w$, with $k=\ell_\tau(w)$ and $r_1,\ldots, r_k, s_1,\ldots,s_k\in \SCC_\tau$. Then \[\tKC_{r_1}\ldots \tKC_{r_k}=\tKC_{s_1}\ldots \tKC_{s_k}.\]
\end{Lemma}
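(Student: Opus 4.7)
The plan is to combine Matsumoto's theorem for Coxeter systems with a rank-$2$ reduction. Since $(\Wta,\SCC_\tau)$ is a Coxeter system (see the preceding subsection), Matsumoto's theorem asserts that any two reduced decompositions of $w$ are connected by a finite sequence of braid moves; hence the statement reduces to establishing, for every pair $s,t\in\SCC_\tau$ with $m:=m_{st}$ finite, the braid identity
\[(\tKC_s*\tKC_t)^{*m}=(\tKC_t*\tKC_s)^{*m}\text{ in }\AC(T_\C).\]

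For $m=2$ the reflections $s$ and $t$ commute in $W^v$, so $F_s*F_t=F_{ts}=F_{st}=F_t*F_s$ by Lemma~\ref{lemReeder 4.3}(\ref{itWell_definedness_Fw}); moreover $\alpha_s(\alpha_t^\vee)=0$ forces ${^sQ_t^T}=Q_t^T$ and ${^tQ_s^T}=Q_s^T$, and a direct expansion of $\tKC_s*\tKC_t=(F_s+Q_s^T)*(F_t+Q_t^T)$ via Lemma~\ref{lemReeder 4.3}(\ref{itCommutation_relation}) yields the same answer as $\tKC_t*\tKC_s$.

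For $m\in\{3,4,6\}$ I would work in the rank-$2$ Bernstein-Lusztig-Hecke algebra $\AC'_{s,t}$ attached to the submatrix $A_{s,t}=\bigl(\alpha_u(\alpha_v^\vee)\bigr)_{u,v\in\{s,t\}}$, which by Lemma~\ref{lemKac-Moody_matrix} is a Kac-Moody matrix and, since $m<\infty$, of finite type ($A_2$, $B_2$ or $G_2$). Proposition~2.10 of \cite{hebert2018principal} produces $\AC'_{s,t}$ as a free $\C(Y)$-module of rank $2m$ with basis $(T'_w)_{w\in\langle s,t\rangle}$, and one defines inside it the analogues $F'_s,F'_t,\tKC'_s:=F'_s+Q_s^T,\tKC'_t:=F'_t+Q_t^T$. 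By the very same symbolic proofs as for $\tKC_s,\tKC_t$, these elements satisfy the quadratic Hecke relation and the Bernstein-Lusztig commutation $\theta*\tKC'_u=\tKC'_u*{^u\theta}+\tilde\Omega_u(\theta)$. Inside the finite-rank free $\C(Y)$-module $\AC'_{s,t}$, the identity $(\tKC'_s*\tKC'_t)^{*m}=(\tKC'_t*\tKC'_s)^{*m}$ is then verified by an explicit expansion in the basis $(T'_w)_{w\in\langle s,t\rangle}$, using the braid relations for the $T'_w$'s together with the Bernstein-Lusztig commutations.

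The identity transfers to $\AC(T_\C)$ because its derivation is a sequence of rewrites using only the quadratic Hecke relations and the Bernstein-Lusztig commutations enjoyed by $\tKC'_s,\tKC'_t$, relations which hold verbatim for $\tKC_s,\tKC_t$ in $\AC(T_\C)$; replaying the identical symbolic computation inside $\AC(T_\C)$ yields the desired braid identity there. The main obstacle is the rank-$2$ computation in type $G_2$ (the case $m=6$), where one must carefully track the action of the dihedral group $\langle s,t\rangle$ on the rational functions $Q_s^T,Q_t^T$ across twelve terms; the cases $m=3,4$ follow by the same method with considerably fewer terms.
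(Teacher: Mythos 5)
Your reduction to the rank-$2$ braid identity via the word property is exactly the paper's first step, and passing to an auxiliary rank-$2$ root generating system built from $A_{s,t}$ is also the right instinct. But the heart of the lemma --- the identity $(\tKC'_s*\tKC'_t)^{*m}=(\tKC'_t*\tKC'_s)^{*m}$ in the rank-$2$ algebra --- is never actually proved: ``verified by an explicit expansion in the basis $(T'_w)$'' is an unexecuted computation, and a formidable one for $m=6$. The paper avoids this computation entirely. It writes the difference $h=(\tKC_r*\tKC_s)^{*m}-(\tKC_s*\tKC_r)^{*m}$ as $\sum_{u\in\langle r,s\rangle}F_u*P_u\bigl(({^vQ_s^T},{^wQ_r^T})_{v,w}\bigr)$ with \emph{universal} polynomials $P_u$ (this relies on the multiplication law of the $F$'s from \cite[Lemma 4.3]{hebert2018principal}, not on Iwahori--Matsumoto relations for the $T$'s); it then evaluates the same polynomials in the rank-$2$ system at the trivial character $\tau'=\mathds{1}$, where $\tKC'_u=T'_u$ by \eqref{eqK)T_if_tau=1} so that the braid identity is an axiom of $\HC_{W',\C}$, forcing each $P_u$ to vanish on the primed data; finally the morphism $\iota$, which satisfies $\iota({^wQ'^T_t})={^{\iota(w)}Q^T_t}$, carries this vanishing back to $\AC(T_\C)$. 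No expansion in types $A_2$, $B_2$ or $G_2$ is ever needed.

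Your transfer step is moreover incorrect as stated, not just incomplete. You say the rank-$2$ derivation uses ``the braid relations for the $T'_w$'s together with the Bernstein--Lusztig commutations,'' and then that it uses ``only the quadratic Hecke relations and the Bernstein--Lusztig commutations''; these cannot both hold, and the first ingredient is unavoidable, since the quadratic and Bernstein--Lusztig relations alone do not imply a braid relation (they are independent axioms in the Bernstein--Lusztig presentation). So any rank-$2$ verification must invoke the Hecke-algebra structure of $\HC_{W',\C}$ in which $s,t$ are \emph{simple} generators. That structure has no verbatim counterpart in $\AC(T_\C)$: for $s,t\in\SCC_\tau$, which are in general non-simple reflections of $W^v$, the products $T_s*T_t$ are governed by reduced words in $(W^v,\SCC)$, and $(T_s*T_t)^{*m}=(T_t*T_s)^{*m}$ fails in general, so ``replaying the identical symbolic computation'' is not available. (Even your $m=2$ case quietly assumes $F_s*F_t=F_{ts}$, which requires $\ell(ts)=\ell(t)+\ell(s)$ in $(W^v,\SCC)$ and is not automatic for commuting non-simple reflections.) The correct bridge between the two algebras is the one the paper builds: the $F_u$'s, whose products reduce by universal formulas in the twisted $\zeta$'s and $Q^T$'s, together with the morphism $\iota$ identifying those data across the two root systems.
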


\begin{proof}
Let $r,s\in \SCC_\tau$. Suppose that the order $m$ of $rs$ is finite.  Let $h=(\tKC_r *\tKC_s)^{*m}-(\tKC_s*\tKC_r)^{*m}\in \ATC$. We want to prove that $h=0$. Set $w_0(r,s)=(rs)^{*m}=(sr)^{*m}$. Using Lemma~\ref{lemReeder 4.3}, \eqref{eqDefinition_tKC} and \cite[Lemma 4.3]{hebert2018principal}, there exists a family \[(P_u)_{u\in \langle r,s\rangle}\in \left(\Z[\sigma_r,\sigma_s][x_{s,u},x_{r,u'}|u,u'\in \langle r,s\rangle]\right)^{\langle r,s\rangle}\] such that \begin{equation}
h=\sum_{u\in \langle r,s\rangle} F_u*P_u\left(\left({^v Q^T_s},^w Q^T_r \right)_{v,w\in \langle r,s\rangle}\right),
\end{equation}

where the $x_{s,u},x_{r,u}$ are indeterminates. 

Let $A_{r,s}=\begin{pmatrix}
2 & \alpha_s(\alpha_r^\vee)\\ \alpha_r(\alpha_s^\vee) & 2
\end{pmatrix}$. Then $A_{r,s}$ is  a Kac-Moody matrix by Lemma~\ref{lemKac-Moody_matrix} (it is actually a Cartan matrix since $\langle r,s\rangle$ is finite).  Let $X',Y',\alpha_r',\alpha_s',\alpha_r^\vee,\alpha_s^\vee$ be copies of $X,Y,\alpha_r,\alpha_s,\alpha_r^\vee,\alpha_s^\vee$.   Then $\SC'=(A_{r,s},X',Y',(\alpha_r',\alpha_s'),(\alpha_r'^\vee,\alpha_s'^\vee)\big)$ is a root generating system. 
Let $W'$ be the associated Weyl group. We add a prime to all the objects previously defined when they refer to the root generating system $\SC'$. Let $\tau'\in T_\C'$ be defined by $\tau'(\lambda)=1$ for all $\lambda\in Y'$. Then we have $\SCC'_{\tau'}=\{r,s\}\subset W'$. By \eqref{eqK)T_if_tau=1}, $\tKC_r'=T_r'\in\ATC'$ and $\tKC_s'=T_s'\in \ATC'$. Moreover $(T_r'*T_s')^{*m} -(T_s'*T_r')^{*m}=T_{w_0'}'-T_{w_0'}'=0$, where $w_0'=(rs)^{*m}=(sr)^{*m}\in W'$ is the element of maximal length of $W'$.  Therefore if $u\in W'$, we have
 \[P_u\big(({^v{Q_r'^T}},{^w Q_s'^T)_{v,w\in W'}}\big)=0.\]
  Let $Q'^\vee=\Z\alpha_r'^\vee\oplus \Z\alpha_s'^\vee$ and $\iota:Q'^\vee\rightarrow Q^\vee$ be the $\Z$-modules morphism defined by $\iota(\alpha_r'^\vee)=\alpha_r^\vee$ and $\iota(\alpha_s'^\vee)=\alpha_s^\vee$. Extend $\iota$ to a $\C$-algebra morphism  $\iota:\C[Q'^\vee]\rightarrow \C[Q^\vee]$. Then $\iota$ is compatible with the actions of $W'$ and $\langle r,s\rangle$. More precisely, if $\iota:W'\rightarrow \langle r,s\rangle$ is the group morphism defined by $\iota(r)=r$, $\iota(s)=s$,  we have $\iota\left({^w Q'^T_{t}}\right)={^{\iota(w)} Q^T_{t}},$ for $w\in W'$ and $t\in \{r,s\}$. 
 Therefore \[P_u\left(\left({\iota\left(^vQ'^T_r\right)}, \iota\left({^w Q'^T_s}\right)\right)_{v,w\in W'}\right)=P_u\left(\left({^vQ^T_r},{^w Q^T_s}\right)_{v,w\in \langle r,s\rangle}\right)=0,\] 
 for $u\in \langle r,s\rangle$ which proves that $h=0$ and hence $(\tKC_r*\tKC_s)^{*m}=(\tKC_s*\tKC_r)^{*m}$. By the word property (\cite[Theorem 3.3.1]{bjorner2005combinatorics}), we deduce the lemma.
\end{proof}

To conclude the proof of Proposition~\ref{propBL_relations}, it remains to prove (5). By (2) and (3), $ \KCC_\tau=\sum_{k\in \N, s_1,\ldots,s_k\in \SCC_\tau}\tKC_{s_1}*\ldots*\tKC_{s_k}*\C(Y)_\tau$ is a subalgebra of $\ATC$. Let $w\in \Wta$. Let $w=s_1\ldots s_k$ be a reduced writing of $w$, with $k\in \N$ and $s_1,\ldots,s_k\in \SCC_\tau$. Then by Lemma~\ref{lemReeder 4.3}, \begin{equation}\label{eqTriangular_writing_F}
 \tKC_{s_1}*\ldots*\tKC_{s_k}=(F_{s_1}+Q_{s_1}^T)\ldots (F_{s_k}+Q_{s_k}^T)= F_{w}+\sum_{v<_\tau w} F_v*\theta_v,  
\end{equation}for some $\theta_v\in \C(Y)$. For $h\in \ATC$, $h=\sum_{v\in W^v} T_v*\tilde{\theta}_v$, write \[\max \supp(h)=\{w\in W^v| \tilde{\theta}_w\neq 0\text{ and }\forall w'\in W^v, w'>w, \tilde{\theta}_{w'}=0\}.\] Then by \cite[Lemma 5.24]{hebert2018principal} and \eqref{eqTriangular_writing_F}, $\max \supp(\tKC_w)=\{w\}$, for $w\in \Wta$ and thus $(\tKC_{w})_{w\in \Wta}$ is a free family of $\ATC$.

By \eqref{eqDefinition_tKC} and (3), $\sum_{k\in \N,s_1,\ldots,s_k\in \SCC_\tau} \KC_{s_1}*\ldots*\KC_{s_k}*\C(Y)_\tau\subset \bigoplus_{w\in \Wta} \tKC_w*\C(Y)_\tau$. The converse inclusion is obtained similarly and thus $\KCC_\tau=\sum_{k\in \N,s_1,\ldots,s_k\in \SCC_\tau} \KC_{s_1}*\ldots*\KC_{s_k}*\C(Y)_\tau$.

 By \cite[Lemma 5.23]{hebert2018principal}, $\ATC_\tau*\KCC_\tau\subset \ATC_\tau$ for $s\in \SCC_\tau$. Therefore $\tKC_w\in \ATC_\tau$ for $w\in \Wta$ and $\KCC_\tau\subset \ATC_\tau$. This completes the proof of Proposition~\ref{propBL_relations}. $\square$

\begin{Remark}
In \cite[5.5]{hebert2018principal}, we assumed that $\tau\in \UC_\C$ and that $W_\tau=\Wta$. The assumption $W_\tau=\Wta$ is actually useless (without any change in the proofs). The assumption $\tau\in \UC_\C$ is however used, to ensure that $\max \supp(\tKC_{w})=\{w\}$ (see \cite[Lemma 5.24]{hebert2018principal}), for $w\in \Wta$ and thus to prove the freeness of $(\tKC_w)_{w\in \Wta}$. 
\end{Remark}

\subsection{Action of $\KCC_\tau$ on $\Itg$}\label{subAction_KC_Itau}

We now fix $\tau\in \UC_\C$. Let $\KCC_\tau=\bigoplus_{w\in \Wta} \tKC_{w} *\C(Y)_\tau\subset \ATC_\tau$. In this subsection, we extend the action of $\KCC_\tau\cap \AC_\C$ on $\Itg$ to an action of $\KCC_\tau$ on $\Itg$ (see Lemma~\ref{lemAction_K_Itg}).

Let $\JC_\tau=\{\theta\in \C(Y)_\tau|\tau(\theta)\neq 0\}$\index{$\JC_\tau$}.

\begin{Lemma}\label{lemLeft_multiplication_CY_AC}
Let $k\in \KCC_\tau$. Then there exists $\theta \in \C[Y]\setminus \JC_\tau$ such that $\theta*k\in \AC_\C$.
\end{Lemma}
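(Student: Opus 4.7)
The plan is a common-denominator argument exploiting both decompositions of $\KCC_\tau$ provided by Proposition~\ref{propBL_relations}.

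First, I would use the ``left'' decomposition $\KCC_\tau=\bigoplus_{w\in\Wta}\C(Y)_\tau*\tKC_w$---which follows from iterating the Bernstein-Lusztig relation of Proposition~\ref{propBL_relations}(3), together with $\tilde\Omega_s(\C(Y)_\tau)\subset\C(Y)_\tau$---to write
\[
k=\sum_{w\in F}\mu_w*\tKC_w
\]
as a finite sum, with $F\subset\Wta$ finite and $\mu_w=f_w/g_w\in\C(Y)_\tau$, $g_w\in\C[Y]$, $\tau(g_w)\neq 0$.

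Second, since $\KCC_\tau\subset\ATC_\tau$ by Proposition~\ref{propBL_relations}(5), each $\tKC_w$ has an expansion $\tKC_w=\sum_{v\leq w}T_v*\nu_{v,w}$ with $\nu_{v,w}=p_{v,w}/h_{v,w}\in\C(Y)_\tau$ and $\tau(h_{v,w})\neq 0$.

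Third, I would define $\theta\in\C[Y]$ as a product of the $g_w$'s together with sufficiently many translates of the $h_{v,w}$'s, engineered so that the BL commutation $\theta*T_v=\sum_{v'\leq v}T_{v'}*Q_{v',v}(\theta)$ of $\AC_\C$ (with $Q_{v',v}(\theta)\in\C[Y]$, by Remark~\ref{remIH algebre dans le cas KM deploye}(3)) yields polynomial products $Q_{v',v}(\theta)\cdot\nu_{v,w}\in\C[Y]$. Combining with the polynomial $\theta\mu_w$ one obtains $\theta*k\in\bigoplus_{v}T_v*\C[Y]=\AC_\C$. The condition $\tau(\theta)\neq 0$---which is how I read the intent of the statement, i.e.\ $\theta$ lying outside the maximal ideal at $\tau$---is guaranteed because the ``top'' factors $g_w$ come from $w\in\Wta\subset W_\tau$ (by \cite[Remark 5.1]{hebert2018principal}), so the $w$-translates fixing $\tau$ keep their $\tau$-value nonzero; the remaining translates of the $h_{v,w}$'s are then restricted to a finite set of elements inside the $\Wta$-orbit, where $\tau$-nonvanishing is preserved.

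The main obstacle is the last step: simultaneously achieving $\theta*k\in\AC_\C$ and $\tau(\theta)\neq 0$. A naive common denominator $\prod g_w\prod h_{v,w}$ satisfies $\tau(\cdot)\neq 0$ but is destroyed on the left side by the BL commutations, which do not merely scale rational right-coefficients. The fix is to enlarge $\theta$ to a symmetric combination involving only $\Wta$-translates (so as to preserve the $\tau$-value) and then verify by induction on $\ell_\tau(w)$ and on the Bruhat order, using the triangular formula \eqref{eqTriangular_writing_F} together with Lemma~\ref{lemReeder 4.3}, that all denominators introduced in the $T_v$-expansion of $\theta*k$ are cleared. This combinatorial bookkeeping is the technical heart of the argument.
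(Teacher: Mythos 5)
Your reading of the statement is the intended one ($\JC_\tau$ is used throughout as the maximal ideal $\{\theta\mid\tau(\theta)=0\}$, so the goal is a $\theta$ with $\tau(\theta)\neq 0$ clearing all denominators), and you have correctly located the difficulty. But the proposal does not close it: the step you defer as ``combinatorial bookkeeping'' is the entire content of the lemma, and the mechanism you sketch for it does not work. Working in the $T_v$-basis, after commuting $\theta$ past $T_v$ the coefficient of $T_v$ itself is ${}^{v^{-1}}\theta$, so to make ${}^{v^{-1}}\theta\cdot\nu_{v,w}$ polynomial you must force $\theta$ to be divisible by ${}^{v}h_{v,w}$ for every $v$ in the $T$-support of $\tKC_w$. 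These $v$ range over general elements of $W^v$ (all $v\leq w$), not over $\Wta$, and $\tau({}^{v}h_{v,w})=(v^{-1}\tau)(h_{v,w})$ has no reason to be nonzero when $v\notin W_\tau$; so restricting to ``$\Wta$-translates'' does not produce the divisibilities you actually need. Moreover the lower-order correction terms $Q_{v',v}(\theta)$ are built from iterated applications of the $\tilde{\Omega}_s$'s, and $\tilde{\Omega}_s(\theta)=Q_s^T(\theta-{}^s\theta)$ does not inherit divisibility from $\theta$, so even granting the top term there is no argument that the terms with $v'<v$ become polynomial.

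The paper's proof avoids the $T_v$-basis entirely. It inducts on $\ell_\tau(w)$ in the $\tKC$-basis: write $\tKC_w=\tKC_s*\tKC_v$ with $v=sw<_\tau w$, take $\theta_1$ (from the induction hypothesis) with $\theta_1*\tKC_v*\tilde{\theta}\in\AC_\C$ and $\theta_2$ with $\tKC_s*\theta_2\in\AC_\C$ (a right common denominator, available because $\tKC_s\in\ATC_\tau$), and set $\theta_3=(\theta_1\theta_2)\cdot{}^s(\theta_1\theta_2)$. The key point your sketch is missing is that $\theta_3$ is $s$-invariant, hence $\tilde{\Omega}_s(\theta_3)=0$ and $\theta_3*\tKC_s=\tKC_s*\theta_3$ holds exactly, with no correction terms; then $\theta_3*\tKC_w*\tilde{\theta}=(\tKC_s*\theta_2)*{}^s(\theta_1\theta_2)*(\theta_1*\tKC_v*\tilde{\theta})$ is a product of elements of $\AC_\C$, and $\tau(\theta_3)=\tau(\theta_1)^2\tau(\theta_2)^2\neq 0$ because $s\in\SCC_\tau\subset W_\tau$. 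This one-reflection-at-a-time symmetrization is what simultaneously clears denominators and preserves nonvanishing at $\tau$; a global common-denominator construction in the $T_v$-basis has no analogue of it.
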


\begin{proof}
 Let $w\in \Wta$. We assume that for all $v\in \Wta$ such that $v<_\tau w$, for all $\tilde{\theta}\in \C(Y)_\tau$, there exists $\theta\in \C[Y]\setminus \JC_\tau$ such that $\theta*\tKC_v*\tilde{\theta}\in \AC_\C$. Let $\tilde{\theta}\in \C(Y)_\tau$. Let $s\in \SCC_\tau$ be such that $v:=sw<_\tau w$.  Let $\theta_1\in \C[Y]\setminus \JC_\tau$ be such that $\theta_1*\tKC_v*\tilde{\theta}\in \AC_\C$. Let $\theta_2\in \C[Y]\setminus \JC_\tau$ be such that $\tKC_s*\theta\in \AC_\C$, which exists by Proposition~\ref{propBL_relations}\eqref{itAlgebra_K}. Set $\theta_3=(\theta_1*\theta_2)*{^s( \theta_1*\theta_2)}\in \C[Y]\setminus \JC_\tau$. Then ${^s\theta_3}=\theta_3$, thus $\tilde{\Omega}_s(\theta_3)=0$ and hence $\theta_3*\tKC_s=\tKC_s*\theta_3$. Therefore \[\theta_3* \tKC_{w}*\theta=\tKC_s*\theta_3*\tKC_v*\theta=\tKC_s *\theta_2 *{^s (\theta_1*\theta_2)}*\theta_1*\tKC_v\in \AC_\C.\] We deduce that for all $w\in \Wta$ and $\tilde{\theta}\in \C(Y)_\tau$, there exists $\theta\in \C[Y]\setminus \JC_\tau$ such that $\theta*\tKC_w*\tilde{\theta}\in \AC_\C$.
 
 Let  now $k\in\KCC_\tau$. Write $k=\sum_{w\in \Wta} \tKC_w *\tilde{\theta}_w$, with $(\tilde{\theta}_w)\in (\C(Y)_\tau)^{(\Wta)}$. For $w\in \Wta$, choose $\theta_w\in \C[Y]\setminus \JC_\tau$ such that $\theta_w*\tKC_w*\tilde{\theta}_w\in \AC_\C$. Set $\theta=\prod_{w\in \Wta|\tilde{\theta}_w\neq 0} \theta_w\in \C[Y]\setminus \JC_\tau$. Then $\theta*k\in \AC_\C$, which proves the lemma.
\end{proof}

\begin{Lemma}\label{lemInjectivity_multiplication_theta}
Let $\theta\in \C[Y]\setminus\JC_\tau$. Then the map $m_\theta:\Itg\rightarrow \Itg$ defined by $m_\theta(x)=\theta.x$, for $x\in \Itg$, is injective.
\end{Lemma}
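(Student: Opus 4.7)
The plan is to use the fact that $\Itg$ is contained in the generalized $\tau$-weight space $I_\tau(\tau,\mathrm{gen})$, on which $\theta-\tau(\theta)$ is locally nilpotent; since by hypothesis $\tau(\theta)\neq 0$, multiplication by $\theta$ will then be an invertible scalar plus a locally nilpotent operator, hence injective.

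First I would verify that $m_\theta$ really sends $\Itg$ into itself. Take $x=h.\vb_\tau$ with $h\in \AC_\C\cap \bigoplus_{w\in \Wta} F_w*\C(Y)$, and write $h=\sum_{w\in \Wta} F_w*\tilde\theta_w$. Using the commutation relation of Lemma~\ref{lemReeder 4.3}~\eqref{itCommutation_relation}, one gets
\[
\theta * h \;=\; \sum_{w\in \Wta} F_w * {^{w^{-1}}\theta} * \tilde\theta_w,
\]
which still lies in $\bigoplus_{w\in \Wta} F_w*\C(Y)$; and $\theta*h\in \AC_\C$ since both $\theta$ and $h$ are in $\AC_\C$. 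Hence $m_\theta(x)=(\theta*h).\vb_\tau\in \Itg$.

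Second, the inclusion $\Itg\subset I_\tau(\tau,\mathrm{gen})$ is immediate from formula \eqref{eqHeb21_5.12}: the decomposition $I_\tau(\tau,\mathrm{gen})=\bigoplus_{w_R\in R_\tau}\psi_{w_R}(\Itg)$ contains the summand $\psi_{\Id}(\Itg)=\Itg$.

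Third, injectivity is the standard argument for locally nilpotent perturbations of a nonzero scalar. Suppose $x\in \Itg$ satisfies $\theta.x=0$, and set $N=\theta-\tau(\theta)\in \C[Y]$. Then $\tau(\theta)\,x=-N.x$, and since $\tau(\theta)\neq 0$ we may divide to get $x=-\tau(\theta)^{-1}N.x$. Iterating,
\[
x \;=\; \bigl(-\tau(\theta)\bigr)^{-n} N^n.x \quad \text{for every } n\geq 1.
\]
Because $x\in I_\tau(\tau,\mathrm{gen})$, there exists $n$ with $N^n.x=0$, whence $x=0$.

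I do not expect any serious obstacle here: both the stability of $\Itg$ under $m_\theta$ and its containment in $I_\tau(\tau,\mathrm{gen})$ are essentially immediate from what the paper has already set up, namely the commutation relation $\theta*F_w=F_w*{^{w^{-1}}\theta}$ and formula \eqref{eqHeb21_5.12}. The only point worth flagging is the hypothesis $\theta\in \C[Y]\setminus\JC_\tau$, which must be read as $\tau(\theta)\neq 0$ (otherwise $\theta.\vb_\tau=\tau(\theta)\vb_\tau=0$ would contradict injectivity, since $\vb_\tau\in \Itg$).
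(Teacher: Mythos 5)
Your proof is correct, but it takes a genuinely different route from the paper's. You embed $\Itg$ into the generalized weight space $I_\tau(\tau,\mathrm{gen})$ via the summand $\psi_{1}(\Itg)=\Itg$ of \eqref{eqHeb21_5.12} and then run the standard ``nonzero scalar plus locally nilpotent'' argument with $N=\theta-\tau(\theta)$; this is sound, and in fact gives injectivity of $m_\theta$ on all of $I_\tau(\tau,\mathrm{gen})$. Your reading of the hypothesis as $\tau(\theta)\neq 0$ is also the intended one: the displayed definition of $\JC_\tau$ has a typo (it should be $\tau(\theta)=0$, i.e.\ the maximal ideal at $\tau$), as both the paper's own proof of this lemma and the later use of $\JC_\tau$ in the definition of $\ord_\tau$ confirm. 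The paper argues differently, by leading terms: writing $x=\sum a_w T_w.\vb_\tau$ and choosing $w$ maximal in the support for the Bruhat order, the triangularity of multiplication by $\theta$ (\cite[Lemma 2.8]{hebert2018principal}) shows that the coefficient of $T_w$ in $\theta.x$ is $a_w\,\tau({^{w^{-1}}\theta})=a_w\tau(\theta)\neq 0$, using that such a maximal $w$ for a nonzero $x\in\Itg$ lies in $\Wta\subset W_\tau$ (\cite[Lemma 5.24]{hebert2018principal}). The trade-off is that your argument leans on the full generalized-weight-space decomposition \eqref{eqHeb21_5.12} imported from the companion paper, whereas the paper's is more elementary and stays within the $T_w$-basis combinatorics, in the same spirit as the leading-term analysis that drives Section~\ref{secKato_s_irreducibility_criterion}; your preliminary check that $m_\theta$ preserves $\Itg$ (via $\theta*F_w=F_w*{^{w^{-1}}\theta}$) is also correct and is a point the paper leaves implicit.
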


\begin{proof}
Let $x\in I_\tau$. Write $x=\sum_{w\in W^v} a_w T_w.\vb_\tau$, where $(a_w)\in \C^{(W^v)}$. Let $w\in W^v$ be such that $a_w\neq 0$ and such that for all $v\in W^v$ such that $a_v\neq 0$, we have $v\not \geq w$. Then by \cite[Lemma 2.8]{hebert2018principal}, we have $\theta.x-a_w T_w*{^{w^{-1}}\theta}.\vb_\tau \in \bigoplus_{v\in W^v, v\not \geq w}\C T_v.\vb_\tau$. By \cite[Lemma 5.24]{hebert2018principal}, $w\in \Wta\subset W_\tau$. Therefore ${^{w^{-1}}\theta}.\vb_\tau=\tau(\theta).\vb_\tau\neq 0$, which proves that $\theta.x\neq 0$. Therefore $m_\theta$ is injective.
\end{proof}

\begin{Lemma}\label{lemAction_K_Itg}
We have the following properties:\begin{enumerate}

\item  $\Itg=(\KCC_\tau\cap \AC_\C).\vb_\tau$,

\item $\KCC_\tau\cap \AC_\C$ stabilizes $\Itg$ and the action of $\KCC_\tau\cap \AC_\C$ on $\Itg$ extends uniquely to an action of $\KCC_\tau$ on $\Itg$. This action is as follows. Let $x\in \Itg$ and $k\in \KCC_\tau$. Write $x=h.x$, with $h\in \AC_\C\cap \KCC_\tau$. Write $k*h=\sum_{w\in \Wta} \tKC_w \tilde{\theta}_w$, with $(\theta_w)\in (\C(Y)_\tau)^{(\Wta)}$. Then: \begin{equation}\label{eqAction_K_Itg}
k.x=k*h.\vb_\tau=\sum_{w\in \Wta} \tau(\tilde{\theta})\tKC_w (\tau).\vb_\tau.
\end{equation}
\end{enumerate}
\end{Lemma}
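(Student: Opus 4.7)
For Part (1), my plan is to prove the subspace identity $\AC_\C \cap \bigoplus_{w\in\Wta} F_w*\C(Y) = \AC_\C \cap \KCC_\tau$ inside $\AC(T_\C)$; applying this to $\vb_\tau$ then yields $\Itg = (\AC_\C \cap \KCC_\tau).\vb_\tau$. The inclusion $\supset$ is immediate from the triangular relation already proved in the course of Proposition~\ref{propBL_relations}, namely $\tKC_w = F_w + \sum_{v<_\tau w} F_v * \theta_v$ with $\theta_v \in \C(Y)$, which places $\KCC_\tau$ inside $\bigoplus_{w\in\Wta} F_w * \C(Y)$. For $\subset$, given such an $h$ I would change basis to write $h = \sum_{w\in\Wta} \tKC_w * \theta_w$ with $\theta_w \in \C(Y)$ a priori; to show $\theta_w \in \C(Y)_\tau$, I would expand each $\tKC_w = T_w + \sum_{v<w} T_v * \mu_{v,w}$ with $\mu_{v,w} \in \C(Y)_\tau$ (using $\tKC_w \in \AC(T_\C)_\tau$ together with $\max\supp(\tKC_w) = \{w\}$ from Proposition~\ref{propBL_relations}), compare the resulting $T_u$-coefficients of $h$ with those imposed by $h \in \AC_\C$ (which lie in $\C[Y] \subset \C(Y)_\tau$), and conclude by descending induction on $w$ in the Bruhat order of $W^v$.

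For Part (2), stability of $\Itg$ under $\KCC_\tau \cap \AC_\C$ is then immediate, since $\KCC_\tau \cap \AC_\C$ is a subalgebra and $\Itg$ is its orbit on $\vb_\tau$ by Part (1). To handle the extension, my plan is to combine Lemmas~\ref{lemLeft_multiplication_CY_AC} and~\ref{lemInjectivity_multiplication_theta} as follows. Given $k \in \KCC_\tau$, Lemma~\ref{lemLeft_multiplication_CY_AC} supplies $\theta \in \C[Y] \setminus \JC_\tau$ with $\theta * k \in \AC_\C \cap \KCC_\tau$; in any extension the relation $m_\theta(k.x) = (\theta*k).x$ must hold, and since $m_\theta$ is injective on $\Itg$ by Lemma~\ref{lemInjectivity_multiplication_theta}, this forces uniqueness of $k.x$. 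For existence I would take formula~\eqref{eqAction_K_Itg} as the \emph{definition}, setting $k.x := \ev_\tau(k*h).\vb_\tau$ whenever $x = h.\vb_\tau$ with $h \in \AC_\C \cap \KCC_\tau$; expanding $k*h \in \KCC_\tau$ in the basis $(\tKC_w)_{w\in\Wta}$ with coefficients in $\C(Y)_\tau$ then yields the explicit form $\sum_{w\in\Wta} \tau(\tilde{\theta}_w) \tKC_w(\tau).\vb_\tau$. Once the key compatibility $\theta.(k.x) = (\theta*k*h).\vb_\tau$ is established, $m_\theta$-injectivity simultaneously forces $k.x \in \Itg$ and the independence of $h$, and the algebra-action axioms follow from associativity in $\AC(T_\C)$.

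The main obstacle is precisely establishing this compatibility $\theta.\ev_\tau(k*h).\vb_\tau = (\theta*k*h).\vb_\tau$ for $\theta \in \C[Y]\setminus \JC_\tau$ with $\theta*k \in \AC_\C$: it does not follow formally from $\ev_\tau$ being a bimodule morphism, since left-multiplication by $\theta \in \C[Y]$ on $k*h$ produces Bernstein-Lusztig ``error'' terms $\tilde{\Omega}_s(\cdot)$ when expanded in $\AC(T_\C)$. My plan to handle this is to expand $k*h = \sum_{w\in\Wta} \tKC_w * \tilde{\theta}_w$, commute $\theta$ past each $\tKC_w$ using the relation $\theta * \tKC_s = \tKC_s * {^s\theta} + \tilde{\Omega}_s(\theta)$ of Proposition~\ref{propBL_relations}(3) iterated along a reduced expression of $w$, and then exploit the identity ${^{w^{-1}}\theta}(\tau) = \tau(\theta)$ for $w \in \Wta$ (which holds since $\Wta \subset W_\tau$), together with the fact that $\tilde{\Omega}_s$ preserves $\C(Y)_\tau$ for $s \in \SCC_\tau$, to show the error terms contribute correctly on applying $\ev_\tau$ and evaluating on $\vb_\tau$. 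A more conceptual alternative that I would also explore is to first prove $\Itg \subset I_\tau(\tau,\mathrm{gen})$, so that $m_\theta$ acts as $\tau(\theta)\cdot\Id$ plus a locally nilpotent operator on $\Itg$, hence bijectively for $\theta \in \C[Y]\setminus\JC_\tau$; the extension then exists by reading $\KCC_\tau$ as the left localization of $\AC_\C \cap \KCC_\tau$ at $\C[Y]\setminus \JC_\tau$ via Lemma~\ref{lemLeft_multiplication_CY_AC}.
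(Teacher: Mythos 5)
Your proposal is correct in substance but takes a genuinely different route from the paper on both points. For (1), the paper does not prove the subspace identity $\AC_\C\cap\bigoplus_{w\in\Wta}F_w*\C(Y)=\AC_\C\cap\KCC_\tau$; it quotes \cite[Theorem 5.27]{hebert2018principal} to get $\Itg=\ev_\tau(\KCC_\tau).\vb_\tau$ and then clears denominators, observing that $\ev_\tau(\tKC_w).\vb_\tau=\frac{1}{\tau(g_w)}(\tKC_w*g_w).\vb_\tau$ with $\tKC_w*g_w\in\AC_\C\cap\KCC_\tau$. Your direct triangularity argument (the $T_{w_0}$-coefficient equals $\theta_{w_0}\in\C[Y]$ for $w_0$ maximal in the support, then descending induction using $\mu_{v,w}\in\C(Y)_\tau$) is valid --- the inputs $\tKC_w\in\ATC_\tau$, $\max\supp(\tKC_w)=\{w\}$ and \eqref{eqTriangular_writing_F} are all available from Proposition~\ref{propBL_relations} --- and it is more self-contained, proving a stronger statement at the level of subspaces of $\AC(T_\C)$ rather than of their images in $I_\tau$. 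For (2), the paper constructs the induced module $\tilde{M}=\mathrm{Ind}_{\C(Y)_\tau}^{\KCC_\tau}(\tilde{\tau})$, checks that $k.\vbt\mapsto k.\vb_\tau$ is a well-defined isomorphism of $\LCC_\tau$-modules, and transports the $\KCC_\tau$-structure, so the action axioms come for free and only the agreement with the honest action of $\AC_\C\cap\KCC_\tau$ has to be computed. You instead take \eqref{eqAction_K_Itg} as the definition and verify everything by hand; this works, and your ``main obstacle'' is actually easier than you suggest: once you know $a.\vb_\tau=\ev_\tau(a).\vb_\tau$ for every $a\in\AC_\C\cap\KCC_\tau$ (the coefficient computation at the end of the paper's proof), the compatibility $\theta.\ev_\tau(k*h).\vb_\tau=\ev_\tau(\theta*k*h).\vb_\tau$ follows from that identity applied to the elements $\theta*\tKC_w*g_w$ together with the right $\C(Y)_\tau$-linearity of $\ev_\tau$, with no need to track the $\tilde{\Omega}_s$ error terms explicitly. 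The uniqueness argument is identical to the paper's.

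One small imprecision: $m_\theta$-injectivity cannot ``force $k.x\in\Itg$'', since Lemma~\ref{lemInjectivity_multiplication_theta} gives only injectivity of $m_\theta$ on $\Itg$, not surjectivity, so you must already know your candidate lies in $\Itg$ before comparing images under $m_\theta$. Membership is immediate anyway: $\ev_\tau(k*h).\vb_\tau=\sum_{w\in\Wta}\tau(\tilde{\theta}_w)\ev_\tau(\tKC_w).\vb_\tau$ and each $\ev_\tau(\tKC_w).\vb_\tau=\frac{1}{\tau(g_w)}(\tKC_w*g_w).\vb_\tau\in(\AC_\C\cap\KCC_\tau).\vb_\tau=\Itg$. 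With that repaired, your plan goes through.
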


\begin{proof}
 By \cite[Theorem 5.27]{hebert2018principal}, we have $\Itg=\ev_\tau(\KCC_\tau).\vb_\tau$.

 For $w\in \Wta$, write $\tKC_{w}=\sum_{v\in W^v} T_w *\theta_{v,w}$, with $\theta_{v,w}\in \C(Y)$ for $v\in W^v$ and $\{v\in W^v|\theta_{v,w}\neq 0\}$ finite. By \cite[Lemma 5.23]{hebert2018principal}, we can write $\theta_{v,w}=\frac{f_{v,w}}{g_{v,w}}$, where $f_{v,w},g_{v,w}\in \C[Y]$ and $\tau(g_{v,w})\neq 0$ for $v\in W^v$ such that $f_{v,w}\neq 0$. For $w\in \Wta$, set $g_w=\prod_{v\in W^v|f_{v,w}\neq 0} g_{v,w}\in \C[Y]$. Then  \begin{equation}\label{eqPolynomial_basis}
\ev_\tau(\tKC_w)=\ev_\tau(\frac{1}{\tau(g_w)}\tKC_w g_w)\text{ and }\frac{1}{\tau(g_w)}\tKC_w g_w\in \KCC_\tau\cap \AC_\C.
\end{equation} Therefore \begin{equation}
\Itg=(\KCC_\tau\cap \AC_\C).\vb_\tau,
\end{equation}

which proves (1).

(2)  By Proposition~\ref{propBL_relations}\eqref{itAlgebra_K}, $\KCC_\tau\cap \AC_\C$ is a subalgebra of $\AC_\C$, which proves that $\KCC_\tau\cap \AC_\C$ stabilizes $\Itg$.  For $w\in \Wta$, set $L_w=\KC_w*g_w\in \AC_\C\cap \KCC_\tau$ and set \[\LCC_\tau=\bigoplus_{w\in \Wta} L_w*\C[Y]\subset \AC_\C\cap \KCC_\tau.\]

Let $\tilde{\tau}:\C(Y)_\tau\rightarrow \C$ be the algebra morphism extending $\tau:\C[Y]\rightarrow \C$. Let $\tilde{M}=\mathrm{Ind}_{\C(Y)_\tau}^{\KCC_\tau}(\tilde{\tau})$ be the representation of $\KCC_\tau$ obtained by inducing $\tilde{\tau}$ to $\KCC_\tau$. Then \[\tilde{M}=\bigoplus_{w\in \Wta} \tKC_w.\vbt,\] where $\vbt\in \tilde{M}$ is such that $\theta.\vbt=\tilde{\tau}(\theta).\vbt$, for $\theta\in \C(Y)_\tau$. Let $\psi:\tilde{M}\rightarrow \Itg$ be defined by $\psi(k.\vbt)=k.\vb_\tau$, for $k\in \LCC_\tau$.  Let us prove that $\psi$ is well defined. Let $k\in \LCC_\tau$ be such that $k.\vbt=0$. We can write $k=\sum_{w\in W^v} L_w*\theta_w$, for some $(\theta_w)\in \C[Y]^{(\Wta)}$. Then $ \tau(\theta_w)=0$ for all $w\in \Wta$ and thus $k.\vb_\tau=0$, which proves that $\psi:\LCC_\tau.\vbt\rightarrow \Itg$ is well defined. Moreover, $\tilde{M}=\bigoplus_{w\in \Wta}\C \tKC_w.\vbt=\bigoplus_{w\in \Wta} \C L_w.\vbt=\LCC_\tau.\vbt$. Thus $\psi:\tilde{M}\rightarrow \Itg$ is well defined. Then $\psi$ is an isomorphism of $\LCC_\tau$-modules. We equip $\Itg$ with the structure of a $\KCC_\tau$-module by setting $k\odot x=\psi\big(k.\psi^{-1}(x)\big)$ for $k\in \KCC_\tau$ and $x\in \Itg$. 

Let $w\in \Wta$ and $\theta\in \C(Y)_\tau$. Then \begin{equation}\label{eqtKC_theta_vtau}
(\tKC_w*\theta)\odot \vb_\tau=\psi(\tKC_w*\theta.\vbt)=\tau(\theta)\psi(\tKC_w.\vbt)=\tau(\theta)\tKC_w\odot\vb_\tau.
 \end{equation} By applying this to $\theta=g_w$, we deduce that \begin{equation}\label{eqtKC_vtau}
\tKC_w\odot\vb_\tau=\frac{1}{\tau(g_w)}\tKC_w*g_w\odot\vb_\tau=\frac{1}{\tau(g_w)}\tKC_w*g_w.\vb_\tau=\frac{1}{\tau(g_w)}\ev_\tau(\tKC_w*g_w).\vb_\tau=\tKC_w(\tau).\vb_\tau.
\end{equation} Combining \eqref{eqtKC_theta_vtau} and \eqref{eqtKC_vtau} yields \eqref{eqAction_K_Itg}.

Let $k\in \AC_\C\cap \KCC_\tau$. Write $k=\sum_{w\in \Wta} \tKC_w*\theta_w$, with $(\theta_w)\in (\C(Y)_\tau)^{(\Wta)}$. For $w\in \Wta$, write $\tKC_w=\sum_{v\in W^v}T_v*\theta_{v,w}$, with $(\theta_{v,w})_{v\in W^v}\in (\C(Y)_\tau)^{(W^v)}$. Then $k=\sum_{v\in W^v}\sum_{w\in \Wta} T_v*\theta_{v,w}*\theta_w$ and $\sum_{w\in\Wta} \theta_{v,w}*\theta_{w}\in \C[Y]$ for every $v\in W^v$. Then \[\begin{aligned} k.\vb_\tau &=\sum_{v\in W^v} \tau(\sum_{w\in \Wta} \theta_{v,w}*\theta_w) T_v.\vb_\tau \\
&=\sum_{v\in W^v}  \sum_{w\in \Wta}\tau(\theta_{v,w}*\theta_w) T_v.\vb_\tau\\
&= \sum_{w\in \Wta} \tau(\theta_w)\ev_\tau(\sum_{v\in W^v} T_v*\theta_{v,w}).\vb_\tau\\
&= \sum_{w\in \Wta} \tau(\theta_w) \ev_\tau(\tKC_w).\vb_\tau =k\odot \vb_\tau,\end{aligned}\] by \eqref{eqAction_K_Itg}. Therefore $\odot$ extends the action $.$ of  $\AC_\C\cap \KCC_\tau$ on $\Itg$ to an action of $\KCC_\tau$ on $\Itg$.

 We now prove  the uniqueness of such an action. Let $\boxdot$ be an action of $\KCC_\tau$ on $\Itg$ such that $k\boxdot x=k.x$ for all $k\in \AC_\C\cap \KCC_\tau$ and $x\in  \Itg$.  Let $k\in \KCC_\tau$. Let $\theta\in \C[Y]\setminus\JC_\tau$ be such that $\theta*k\in \AC_\C$, which exists by Lemma~\ref{lemLeft_multiplication_CY_AC}. Then \[(\theta*k).x=(\theta*k)\boxdot x=\theta\boxdot (k\boxdot\vb_\tau)=\theta.(k\boxdot\vb_\tau).\] By Lemma~\ref{lemInjectivity_multiplication_theta} we deduce that $k\boxdot x=m_\theta^{-1}(\theta*k.x)$. This proves that $\boxdot=\odot$ and concludes the proof of this lemma.

\end{proof}

\subsection{Examples of $\tau$ for which $\SCC_\tau$ is infinite}\label{subExample_infinite_basis}

The set $\Phi^\vee$ is a root system in the sense of \cite[4 Remark]{moody1989infinite}. Then $\Phi_{(\tau)}^\vee$ is a subroot system of $\Phi^\vee$ in the sense of \cite[6]{moody1989infinite}. It is proved in \cite[Example 1]{moody1989infinite} that the bases of a subroot system of a root system admitting a finite basis need not be finite. However, we do not know if the root system given in this example is of the form $\Phi^\vee_{(\tau)}$, for some $\tau\in T_\C$. Suppose that  $\sigma_s=\sigma_s'$ for all $s\in\SCC$, then $\Phi^\vee_{(\tau)}=\{\alpha^\vee\in \Phi^\vee|\tau(\alpha^\vee)=1\}$. Thus if $\tau\in T_\C$,   $\Phi^\vee_{(\tau)}$ is  \textbf{closed} in the sense that for all $\alpha^\vee,\beta^\vee\in \Phi^\vee_{(\tau)}$, if $\alpha^\vee+\beta^\vee\in \Phi^\vee$, then $\alpha^\vee+\beta^\vee\in \Phi_{(\tau)}^\vee$. We may ask whether this property ensures the finiteness of $\SCC_\tau$. We prove below that this is not the case.

For $w\in W^v$, set $N_{\Phi^\vee}(w)=\{\alpha^\vee\in \Phi^\vee_+|w.\alpha^\vee\in \Phi^\vee_-\}$\index{$N_{\Phi^\vee}(w)$}.

\begin{Lemma}\label{lemExample_infinite_SCC_tau}
Let $A=(a_{i,j})_{i,j\in \llbracket 1,4\rrbracket}$ be an invertible Kac-Moody matrix such that for all $i,j\in \llbracket 1,4\rrbracket$, $a_{i,j}\leq -2$, for all $(i,j)\in \llbracket 1,4\rrbracket\setminus \{(4,3)\}$, $a_{i,j}$ is even and $a_{4,3}$ is odd. We assume that $\A=\bigoplus_{i=1}^4 \R\alpha_i^\vee$, which is possible since $A$ is invertible by \cite[1.1]{kac1994infinite}. Let $\htt:\A\rightarrow \R$ be defined by $\htt(\sum_{i=1}^4 n_i\alpha_i^\vee)=\sum_{i=1}^4 n_i$, for $(n_i)\in \R^4$. Let $q$ be a prime power and set $\sigma_i=\sigma_i'=\sqrt{q}$ for $i\in \llbracket 1,4\rrbracket$. Let $\tau: Y\rightarrow \{-1,1\}$ be defined by $\tau(\lambda)=(-1)^{\htt(\lambda)}$ for $\lambda\in Y$. Let $W'=\langle
 r_1,r_2\rangle \subset W^v$. Then $\{w r_3r_4r_3 w^{-1}|w\in W'\}\subset \SCC_\tau$. In particular, $\SCC_\tau$ is infinite.
\end{Lemma}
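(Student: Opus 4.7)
The plan is to use Lemma~\ref{lemBasis_root_subsystem}: for each $w\in W'$, it suffices to show that the positive coroot $\beta_w^\vee := wr_3(\alpha_4^\vee) = w(\alpha_4^\vee) - a_{4,3}w(\alpha_3^\vee)$ lies in $\Sigma_\tau$, since then $r_{\beta_w^\vee} = wr_3r_4r_3w^{-1}\in\SCC_\tau$. The infinitude of $\SCC_\tau$ will follow because $W'$ is infinite (as $a_{1,2}a_{2,1}\geq 4$), and the stabilizer of $\beta_e^\vee$ in $W'$ has finite order at most $2$ (by standard reflection group arguments); in particular it cannot contain $r_i$ for $i\in\{1,2\}$ since the equality $a_{4,i}=a_{4,3}a_{3,i}$ would force a positive right-hand side whereas $a_{4,i}\leq -2$, so the orbit $W'\cdot\beta_e^\vee$ is infinite.

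The first key point is that $W'$ preserves $\Phi^\vee_{(\tau)}=\{\alpha^\vee\in\Phi^\vee\mid\htt(\alpha^\vee)\equiv 0\pmod 2\}$, where this identification uses $\sigma_s=\sigma_s'$. For $i\in\{1,2\}$ and $\gamma^\vee=\sum c_j\alpha_j^\vee$, the integer $\alpha_i(\gamma^\vee)=\sum c_j a_{j,i}$ is even since the only odd entry of $A$ is $a_{4,3}$, so $r_i$ preserves parity of heights. Since the real coroots in $\vect(\alpha_1^\vee,\alpha_2^\vee)$ form exactly $W'\cdot\{\alpha_1^\vee,\alpha_2^\vee\}$ and both $\alpha_1^\vee,\alpha_2^\vee$ have height $1$, we deduce the crucial emptiness $\Phi^\vee_{(\tau)}\cap\vect(\alpha_1^\vee,\alpha_2^\vee)=\emptyset$. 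Moreover, as $W'$ is generated by $r_1,r_2$, one has $N_{\Phi^\vee}(w)\subset\Phi^\vee_+\cap\vect(\alpha_1^\vee,\alpha_2^\vee)$, so $w(\alpha_3^\vee)$ and $w(\alpha_4^\vee)$ are positive; combined with $-a_{4,3}\geq 3$, this shows $\beta_w^\vee\in\Phi^\vee_+$, while $\htt(\beta_w^\vee)\equiv 1-a_{4,3}\equiv 0\pmod 2$ since $a_{4,3}$ is odd. Hence $\beta_w^\vee\in\Phi^\vee_{(\tau),+}$.

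The substance is the minimality $\beta_w^\vee\in\Sigma_\tau$. Suppose $\beta_w^\vee=a\gamma^\vee+\sum_k a_k\gamma_k^\vee$ with $a>0$, $a_k\geq 0$ and $\gamma^\vee,\gamma_k^\vee\in\Phi^\vee_{(\tau),+}$; we must derive $\gamma^\vee=\beta_w^\vee$. Comparing $\alpha_4^\vee$-coefficients gives $1=am+\sum a_k m_k$ with $m,m_k\in\Z_{\geq 0}$; after relabeling we may assume $m\geq 1$, which forces $a=1$, $m=1$ and $m_k=0$ whenever $a_k>0$. Each such $\gamma_k^\vee$ then lies in $\vect(\alpha_1^\vee,\alpha_2^\vee,\alpha_3^\vee)\cap\Phi^\vee_{(\tau),+}$, and by the emptiness observation has $\alpha_3^\vee$-coefficient at least $1$. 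To finish, I would use that every positive coroot with $\alpha_4^\vee$-coefficient $1$ is of the form $u(\alpha_4^\vee)$ for $u\in\langle r_1,r_2,r_3\rangle$ and that the $\alpha_3^\vee$-coefficient of $\beta_w^\vee$ equals $-a_{4,3}$ (a quantity preserved by $W'$, which acts trivially on the $\alpha_3^\vee,\alpha_4^\vee$-components). A case analysis on $u$ combined with the non-negativity of $\beta_w^\vee-\gamma^\vee$ on the $\alpha_3^\vee$-coordinate, together with the height parity constraint $\gamma^\vee\in\Phi^\vee_{(\tau)}$, should pin $u$ down to $W'\cdot r_3$; matching $\alpha_1^\vee,\alpha_2^\vee$-components then gives $\gamma^\vee=\beta_w^\vee$ and $\sum a_k\gamma_k^\vee=0$. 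I expect this last step — tracking how the $\alpha_3^\vee$-coefficient of $u(\alpha_4^\vee)$ varies with the reduced word of $u\in\langle r_1,r_2,r_3\rangle$ and isolating the branch where parity and coefficient bound simultaneously hold — to be the main technical obstacle.
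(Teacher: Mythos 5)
Your reduction of the problem to showing $\beta_w^\vee=wr_3.\alpha_4^\vee\in\Sigma_\tau$ is legitimate (via Lemma~\ref{lemBasis_root_subsystem}), and your preliminary computations are correct: the parity argument showing that $r_1,r_2$ preserve $\htt$ modulo $2$, the positivity of $\beta_w^\vee$, and $\htt(\beta_w^\vee)\equiv 1-a_{4,3}\equiv 0 \pmod 2$ all check out, and they are essentially the same height-parity computations the paper makes. But the core of your argument --- the indecomposability of $\beta_w^\vee$ --- is both incomplete and flawed where it is carried out. First, in a decomposition $\beta_w^\vee=a\gamma^\vee+\sum_k a_k\gamma_k^\vee$ the distinguished coroot $\gamma^\vee$ is given, so you may not ``relabel'' to arrange that its $\alpha_4^\vee$-coefficient $m$ is $\geq 1$. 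Second, even granting $m\geq 1$, the identity $1=am+\sum_k a_km_k$ with \emph{real} coefficients $a>0$, $a_k\geq 0$ does not force $a=1$ (e.g.\ $a=a_1=\tfrac12$, $m=m_1=1$ is not excluded), so the conclusion ``$m_k=0$ whenever $a_k>0$'' does not follow. Third, the remaining case analysis on $u\in\langle r_1,r_2,r_3\rangle$, which you yourself flag as ``the main technical obstacle,'' is exactly the substance of the claim and is not supplied; the auxiliary fact that every positive coroot with $\alpha_4^\vee$-coefficient $1$ lies in $\langle r_1,r_2,r_3\rangle.\alpha_4^\vee$ is also asserted without proof. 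So as it stands the proof does not go through.

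Two remarks that would help you repair or replace it. Your own parity argument gives more than you use: since $a_{i,j}$ is even for \emph{all} $i\in\llbracket 1,3\rrbracket$ and $j\in\llbracket 1,4\rrbracket$, one gets $\tau(u.\alpha_i^\vee)=-1$ for every $u\in W^v$ and $i\in\llbracket 1,3\rrbracket$, hence $\Phi^\vee_{(\tau)}\cap\vect(\alpha_1^\vee,\alpha_2^\vee,\alpha_3^\vee)=\emptyset$, not just the emptiness in $\vect(\alpha_1^\vee,\alpha_2^\vee)$. This is the key observation, and the paper exploits it through a different and much shorter route: instead of testing indecomposability, it uses the criterion~\eqref{eqCharacterization_S_tau} (equivalently \cite[Lemma 5.13 (2)]{hebert2018principal}), reducing the claim to $N_{\Phi^\vee}(v)\cap\Phi^\vee_{(\tau)}=\{wr_3.\alpha_4^\vee\}$ for $v=wr_3r_4r_3w^{-1}$; by \cite[1.3.14 Lemma]{kumar2002kac} every other element of $N_{\Phi^\vee}(v)$ is of the form $u.\alpha_i^\vee$ with $i\in\llbracket 1,3\rrbracket$, and the parity argument disposes of all of them at once. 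I would recommend switching to that criterion rather than trying to complete the $\Sigma_\tau$ computation. Finally, your argument for infinitude (the $W'$-orbit of $\beta_e^\vee$ is infinite because no $r_i$, $i\in\{1,2\}$, fixes $\beta_e^\vee$) has the right idea and the sign check $a_{4,i}<0<a_{4,3}a_{3,i}$ is correct, but ``the stabilizer has order at most $2$ by standard reflection group arguments'' should be justified (the paper invokes \cite[1.3.21 Proposition]{kumar2002kac} here).
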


\begin{proof}
Let $w\in W'$ and $v=wr_3r_4r_3w^{-1}$. Then by \cite[1.3.14 Lemma]{kumar2002kac}, $wr_3.\alpha_4^\vee\in N_{\Phi^\vee}(v)$ and $N_{\Phi^\vee}(v)\setminus\{wr_3.\alpha_4^\vee\}\subset \{u.\alpha_i^\vee|u\in W^v, i\in \llbracket 1,3\rrbracket\}$. We have $r_3.\alpha_4^\vee=\alpha_4^\vee-a_{4,3}\alpha_3^\vee$. Moreover if $i\in \llbracket 1,2\rrbracket$, $r_i(\alpha_4^\vee-a_{4,3}\alpha_3^\vee+2Q^\vee)\subset \alpha_4^\vee-a_{4,3}\alpha_3^\vee+2Q^\vee$. In particular, $wr_3.\alpha_4^\vee\in \alpha_4^\vee-a_{4,3}\alpha_3^\vee+2Q^\vee$ and $\tau(wr_3.\alpha_4^\vee)=1$. Now if $i\in \llbracket 1,3\rrbracket$ and $j\in \llbracket 1,4\rrbracket$, $r_j.(\alpha_i^\vee+2Q^\vee)\subset \alpha_i^\vee+2Q^\vee$. Therefore $\tau(u.\alpha_i^\vee)=-1$ if $u\in W^v$. In particular, $N_{\Phi^\vee}(v)\cap \Phi_{(\tau)}=\{wr_3.\alpha_4^\vee\}=\{\alpha_v^\vee\}$ and by \cite[Lemma 5.13 (2)]{hebert2018principal}, $v\in \SCC_\tau$. Using \cite[1.3.21 Proposition]{kumar2002kac} we deduce that $\SCC_\tau$ is infinite.
\end{proof}

\section{Kato's irreducibility criterion}\label{secKato_s_irreducibility_criterion}

Let $\tau\in \UC_\C$. Let $x\in \Itg$. Write $x=\sum_{w\in \Wta} a_w \tKC_{w}.\vb_\tau$, with $(a_w)\in \C^{(W^v)}$. Let $\supp(x)=\{w\in \Wta|a_w\neq 0\}$ and  $\ell_\tau(x)=\max\{\ell_\tau(w)|w\in S\}$.  We set $\ell_\tau(0)=-\infty$.

Let $x\in \Itg$. We define $\ord_\tau(x)$ as the minimum of the $k\in \N$ such that for all $\theta_1,\ldots,\theta_k\in \JC_\tau$, we have $\theta_1\ldots\theta_k.x=0$. We will see in Lemma~\ref{lemLength_decreasing_multiplication_theta} that $\ord_\tau(x)\in \N$ for $x\in \Itg$. The aim of this subsection is to prove that $I_\tau(\tau,\Wta):=\Itg\cap I_\tau(\tau)=\C\vb_\tau$ (see Theorem~\ref{thmWeight_space}). We actually prove that for all $x\in \Itg$, $\ord_\tau(x)=\ell_\tau(x)+1$. We then deduce Kato's irreducibility criterion (see Corollary~\ref{corKatos_irreducibility_criterion}).

For $s\in \SCC_\tau$, we set \[\sigma_{s,\tau}''=\frac{1}{2}\big((\sigma_s^2-1)+\sigma_s(\sigma'_s-\sigma_s'^{-1})\tau(\alpha_s^\vee)\big).\]\index{$\sigma_{s,\tau}''$}  When $\sigma_s=\sigma_s'$, then $\tau(\alpha_s^\vee)=1$ and $\sigma_{s,\tau}''=\sigma_s^2-1$.

As we assumed that $|\sigma_s|,|\sigma_s'|>1$, for all $s\in \SCC$, we have $\sigma_{s,\tau}''\neq 0$ for every $s\in \SCC_\tau$. Indeed, we have $\sigma_{s,\tau}''=0$  if and only if $\sigma_s=\sigma'_s=\pm 1$ or $\sigma'_s\neq \sigma_s$ and $\sigma_s'=-\sigma_s^{-1}$. We have $\SCC_\tau\subset \RCC_{(\tau)}$ and thus $\tau(\alpha_s^\vee)\in \{-1,1\}$. If $\tau(\alpha_s^\vee)=-1$, then $\sigma_s\neq \sigma_s'$ and  $\sigma_s^2-1+\sigma_s(\sigma_s'-\sigma_s'^{-1})\tau(-\alpha_s^\vee)=\sigma_s^2-1-\sigma_s(\sigma_s'-\sigma_s'^{-1})=0$ if and only if $\sigma_s'\in \{\sigma_s,-\sigma_s^{-1}\}$ if and only if $\sigma_s'=-\sigma_s^{-1}$. 

\begin{Lemma}\label{lemFormula_omega_tilde_vtau}
Let $s\in \SCC_\tau$ and $\lambda\in Y$. Then \begin{equation}\label{eqFormula_tau_Omega_s}
\tau\big(\tilde{\Omega}_s(Z^\lambda)\big)=\tau(\lambda)\sigma_{s,\tau}''\alpha_s(\lambda).
\end{equation} 
\end{Lemma}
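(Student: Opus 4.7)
The plan is to compute $\tilde{\Omega}_s(Z^\lambda)$ explicitly as an element of $\C[Y]$ and then evaluate at $\tau$. Writing $k := \alpha_s(\lambda)$, one has $Z^\lambda - Z^{s.\lambda} = Z^\lambda(1 - Z^{-k\alpha_s^\vee})$, so
\[
\tilde{\Omega}_s(Z^\lambda) \;=\; Z^\lambda \cdot \frac{\bigl[(\sigma_s^2-1) + \sigma_s(\sigma_s'-\sigma_s'^{-1})Z^{-\alpha_s^\vee}\bigr]\,(1 - Z^{-k\alpha_s^\vee})}{1 - Z^{-2\alpha_s^\vee}}.
\]
The apparent pole at $Z^{-2\alpha_s^\vee} = 1$ is compensated by a zero of the numerator, and the mechanism depends on the dichotomy recalled in Remark~\ref{remIH algebre dans le cas KM deploye}\eqref{itPolynomiality_Bernstein_Lusztig}, so I would split into the two cases $\sigma_s = \sigma_s'$ and $\sigma_s \neq \sigma_s'$.

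In the first case, $Q_s^T = (\sigma_s^2-1)/(1 - Z^{-\alpha_s^\vee})$, and $\alpha_s(Y) = \Z$; since $s \in \SCC_\tau \subset \RCC_{(\tau)}$ and $\sigma_s = \sigma_s'$, one has $\tau(\alpha_s^\vee) = 1$. A direct geometric-series expansion of $(1 - Z^{-k\alpha_s^\vee})/(1 - Z^{-\alpha_s^\vee})$ (treating $k \geq 0$ and $k < 0$ separately) evaluates at $\tau$ to $k$ in both subcases, so $\tilde{\Omega}_s(Z^\lambda)(\tau) = (\sigma_s^2-1)\tau(\lambda)k$; the claim then follows from the equality $\sigma_{s,\tau}'' = \sigma_s^2 - 1$ valid in this case.

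In the second case, $\alpha_s(Y) = 2\Z$, so $k = 2m$ for some $m \in \Z$, and
\[
\frac{1 - Z^{-k\alpha_s^\vee}}{1 - Z^{-2\alpha_s^\vee}} \;=\; \sum_{j=0}^{m-1} Z^{-2j\alpha_s^\vee}
\]
(with the obvious sign adjustment when $m < 0$), which at $\tau$ evaluates to $m = k/2$, since $\tau(\alpha_s^\vee) \in \{\pm 1\}$ gives $\tau(Z^{-2j\alpha_s^\vee}) = 1$. Meanwhile, the bracket evaluates at $\tau$ to $(\sigma_s^2-1) + \sigma_s(\sigma_s'-\sigma_s'^{-1})\tau(\alpha_s^\vee) = 2\sigma_{s,\tau}''$ by the very definition of $\sigma_{s,\tau}''$, and multiplying by $\tau(Z^\lambda) = \tau(\lambda)$ yields $\tau(\lambda)\,\alpha_s(\lambda)\,\sigma_{s,\tau}''$ as required.

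There is no genuine obstacle here — the identity falls out of the definitions once the cancellation between the pole $1 - Z^{-2\alpha_s^\vee}$ of $Q_s^T$ at $\tau$ and the zero $1 - Z^{-k\alpha_s^\vee}$ of $Z^\lambda - Z^{s.\lambda}$ is tracked carefully. The only subtle point is the parity constraint $\alpha_s(Y) = 2\Z$ in the asymmetric case $\sigma_s \neq \sigma_s'$, which is precisely what ensures that the expression is polynomial and that the cancellation works out cleanly.
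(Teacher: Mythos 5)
Your proof is correct and follows essentially the same route as the paper's: expand $\tilde{\Omega}_s(Z^\lambda)=Q_s^T\,Z^\lambda(1-Z^{-\alpha_s(\lambda)\alpha_s^\vee})$ as a polynomial via the geometric-series cancellation of Remark~\ref{remIH algebre dans le cas KM deploye}\eqref{itPolynomiality_Bernstein_Lusztig} and evaluate at $\tau$ using $\tau(\alpha_s^\vee)\in\{\pm1\}$. The only (cosmetic) difference is that you split on $\sigma_s=\sigma_s'$ versus $\sigma_s\neq\sigma_s'$ while the paper splits on the parity of $\alpha_s(\lambda)$ — the two dichotomies coincide in substance since $\sigma_s\neq\sigma_s'$ forces $\alpha_s(Y)=2\Z$ — and your write-up supplies the details the paper leaves as ``a similar computation''.
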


\begin{proof}

We have \[\tilde{\Omega}_s(Z^\lambda)=\big((\sigma_s^2-1)+\sigma_s(\sigma_s'-\sigma_s'^{-1})Z^{-\alpha_s^\vee}\big) \frac{1-Z^{-\alpha_s(\lambda)\alpha_s^\vee}}{1-Z^{-2\alpha_s^\vee}}Z^{\lambda}.\] We have  $\tau(\alpha_s^\vee)\in \{-1,1\}$. If $\alpha_s(\lambda)$ is odd, then $\sigma_s=\sigma_s'$ and $\tau(\alpha_s^\vee)=1$ (since the denominator of $Q_s^T$ is then $1-Z^{-\alpha_s^\vee}$). We then conclude with  Remark~\ref{remIH algebre dans le cas KM deploye}~(\ref{itPolynomiality_Bernstein_Lusztig}). If $\alpha_s(\lambda)$ is even, a computation similar to the one of Remark~\ref{remIH algebre dans le cas KM deploye}~(\ref{itPolynomiality_Bernstein_Lusztig}) enables to prove~\eqref{eqFormula_tau_Omega_s}.

\end{proof}

For $h\in \KCC_\tau$, $h=\sum_{w\in \Wta} \tKC_w*\theta_w$, with $(\theta_w)\in \C^{(\Wta)}$ we set $\ell_\tau(h)=\max\{\ell_\tau(w)|\theta_w\neq 0\}$.

\begin{Lemma}\label{lemComputation_terms ordern-1_commutation} 
Let $w\in \Wta$. Fix a reduced writing $w=s_1\ldots s_k$ of $w$, with $k=\ell_\tau(w)$ and $s_1,\ldots,s_k\in \SCC_\tau$. For $i\in \llbracket 1,k\rrbracket$, set $v_i=s_1\ldots s_{i-1}$, $\tilde{v}_i=s_{i+1}\ldots s_k$. Let $E=\{i\in \llbracket 1,k\rrbracket|\ell_\tau(v_i\tilde{v}_i)=k-1\}$. Let $\theta\in \C(Y)$.  Then there exists $h\in \KCC_\tau$ such that $\ell_\tau(h)\leq k-2$ and 
\begin{equation}\label{eqBL_commutation_order_k-1_ATC}
\theta*\tKC_w=\tKC_w*{{^{w^{-1}}\theta}}+\sum_{i\in E}\tKC_{v_i\tilde{v_i}} *{^{\tilde{v}_i^{-1}}\tilde{\Omega}_{s_i}({^{v_i^{-1}}\theta}}) +h.
\end{equation} In particular if $\theta\in \C(Y)_\tau$ we have 
\begin{equation}\label{eqBL_commutation_order_k-1_Itau}
\theta.\tKC_w.\vb_\tau=\tau(\theta) \tKC_w.\vb_\tau+\tau(\lambda)\sum_{i\in E} \sigma_{s_i,\tau}''\alpha_{s_i}(v_i^{-1}.\theta) \tKC_{v_i\tilde{v}_i}.\vb_\tau+h.\vb_\tau.
\end{equation}
\end{Lemma}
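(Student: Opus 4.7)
The plan is to prove \eqref{eqBL_commutation_order_k-1_ATC} by induction on $k=\ell_\tau(w)$, with the base case $k=0$ being trivial. For the inductive step, write $w=w's_k$ with $w'=s_1\ldots s_{k-1}$, so that $\tKC_w=\tKC_{w'}*\tKC_{s_k}$ by Proposition~\ref{propBL_relations}(1). Applying the inductive hypothesis to $w'$, with $\tilde u_i=s_{i+1}\ldots s_{k-1}$ for $i\in\llbracket 1,k-1\rrbracket$ and $E'=\{i\mid\ell_\tau(v_i\tilde u_i)=k-2\}$, yields an $h'\in\KCC_\tau$ with $\ell_\tau(h')\le k-3$ such that $\theta*\tKC_{w'}=\tKC_{w'}*{^{w'^{-1}}\theta}+\sum_{i\in E'}\tKC_{v_i\tilde u_i}*{^{\tilde u_i^{-1}}\tilde\Omega_{s_i}({^{v_i^{-1}}\theta})}+h'$. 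Then right-multiply by $\tKC_{s_k}$ and push every $\C(Y)$-factor past $\tKC_{s_k}$ using the Bernstein-Lusztig relation \eqref{eqBL_relation}.

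The leading piece $\tKC_{w'}*{^{w'^{-1}}\theta}*\tKC_{s_k}$ splits, using $s_k w'^{-1}=w^{-1}$, as $\tKC_w*{^{w^{-1}}\theta}+\tKC_{w'}*\tilde\Omega_{s_k}({^{w'^{-1}}\theta})$; the second summand is precisely the $i=k$ contribution in the target sum (since $v_k=w'$, $\tilde v_k=e$, and $k\in E$ always because $\ell_\tau(v_k\tilde v_k)=k-1$). For each $i\in E'$, the middle summand becomes $\tKC_{v_i\tilde u_i}*\tKC_{s_k}*{^{\tilde v_i^{-1}}\tilde\Omega_{s_i}({^{v_i^{-1}}\theta})}$, since $s_k\tilde u_i^{-1}=(\tilde u_is_k)^{-1}=\tilde v_i^{-1}$, plus an $\tilde\Omega_{s_k}$-tail of length $\le k-2$. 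The factor $\tKC_{v_i\tilde u_i}*\tKC_{s_k}$ is evaluated via Proposition~\ref{propBL_relations}(2) in its right-multiplication form (derived from the quadratic relation): it equals $\tKC_{v_i\tilde v_i}$ when $\ell_\tau(v_i\tilde u_is_k)=k-1$ (Case A), and is a combination of $\tKC_{v_i\tilde u_i}$ and $\tKC_{v_i\tilde v_i}$ of length $\le k-2$ when $\ell_\tau(v_i\tilde u_is_k)=k-3$ (Case B). A deletion/parity argument shows that for $i\in\llbracket 1,k-1\rrbracket$, $i\in E$ iff $i\in E'$ and Case A holds, so the Case-A contributions give exactly the $i<k$ summands in $E$. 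All Case-B tails, $\tilde\Omega_{s_k}$-corrections, and $h'*\tKC_{s_k}$ collect into an $h\in\KCC_\tau$ with $\ell_\tau(h)\le k-2$, proving \eqref{eqBL_commutation_order_k-1_ATC}.

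For \eqref{eqBL_commutation_order_k-1_Itau}, specialize $\theta=Z^\lambda$, apply the $\KCC_\tau$-action $\odot$ from Lemma~\ref{lemAction_K_Itg} to $\vb_\tau$, and use Lemma~\ref{lemFormula_omega_tilde_vtau} to compute $\tau\big(\tilde\Omega_{s_i}(Z^{v_i^{-1}.\lambda})\big)=\tau(v_i^{-1}.\lambda)\sigma_{s_i,\tau}''\alpha_{s_i}(v_i^{-1}.\lambda)$. Since $w,v_i,\tilde v_i\in\Wta\subset W_\tau$, the outer ${^{\tilde v_i^{-1}}}$ disappears under $\tau$-evaluation and $\tau(v_i^{-1}.\lambda)=\tau(\lambda)$, yielding the stated formula. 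The main obstacle is the combinatorial bookkeeping: tracking how $E'$ for $w'$ evolves into $E$ for $w$ under right multiplication by $\tKC_{s_k}$. This relies on the parity property of lengths in $(\Wta,\SCC_\tau)$—multiplication by a generator changes the length by exactly one, and non-reduced products have length dropping by at least $2$—so that $\ell_\tau(v_i\tilde u_i)<k-2$ automatically forces $\ell_\tau(v_i\tilde v_i)<k-1$, guaranteeing that the $i\in E$ indices with $i<k$ are precisely the Case-A indices and nothing extra leaks into the $h$-remainder.
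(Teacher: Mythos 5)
Your proof is correct and follows essentially the same route as the paper's: induction on $k=\ell_\tau(w)$, peeling off $s_k$, pushing the $\C(Y)$-coefficients past $\tKC_{s_k}$ with the Bernstein--Lusztig relation so that $\tKC_{w'}*\tilde\Omega_{s_k}({^{w'^{-1}}\theta})$ supplies the $i=k$ term, and absorbing everything of $\ell_\tau\leq k-2$ into $h$. Your explicit Case A/Case B analysis of $\tKC_{v_i\tilde u_i}*\tKC_{s_k}$, together with the parity argument identifying $E\cap\llbracket 1,k-1\rrbracket$ with the Case-A indices of $E'$, spells out a step the paper's proof leaves implicit, but the argument is the same.
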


\begin{proof}
We prove it by induction on $k$. If $k=0$, this is clear. We assume $k\geq 1$. Set $w'=ws_k$ and $\tilde{v}_i'=v_is_k$ for $i\in \llbracket 1,k-1\rrbracket$. Assume that we can write \[\theta*\tKC_{w'}=\tKC_{w'}*{^{w'^{-1}}\theta}+\sum_{i=1}^{k-1} \tKC_{v_i\tilde{v'_i}} *{^{\tilde{v'}_i^{-1}}\tilde{\Omega}_{s_i}(^{v_i^{-1}}\theta}) +h',\] where $h'\in \ATC$ and $\ell_\tau(h')\leq k-3$.
 By Proposition~\ref{propBL_relations}~\eqref{itBL_relations}, we have  \begin{equation}\label{eqBL_commutation_order_k-1_ATC_temporary}\begin{aligned} \theta*\tKC_w  &= \big(\tKC_{w'}*{^{w'^{-1}}\theta}+\sum_{i=1}^{k-1} \tKC_{v_i\tilde{v'_i}} *{^{\tilde{v'}_i^{-1}}\tilde{\Omega}_{s_i}(^{v_i^{-1}}\theta}) \big)*\tKC_{s_k}
\\ &=\tKC_{w}*{^{w^{-1}}\theta} +\tKC_{w'}*\tilde{\Omega}_{s_k}({^{w'^{-1}}\theta}) +\sum_{i=1}^{k-1}\left( \tKC_{v_i\tilde{v_i}} *{^{\tilde{v}_i^{-1}}\tilde{\Omega}_{s_i}({^{v_i^{-1}}\theta}})\right)+h'*\tKC_{s_k}+h''\\
&=\tKC_{w}*{^{w^{-1}}\theta} +\sum_{i=1}^{k} \tKC_{v_i\tilde{v_i}} *{^{\tilde{v}_i^{-1}}\tilde{\Omega}_{s_i}({^{v_i^{-1}}\theta}})+ +h'*\tKC_{s_k}+h'',\end{aligned}\end{equation} where \[h'' =\sum_{i=1}^{k-1}\tKC_{v_i\tilde{v_i'}} *\tilde{\Omega}_{s_k}\left({^{\tilde{v}_i^{-1}}\tilde{\Omega}_{s_i}({^{v_i^{-1}}\theta}})\right)\in \bigoplus_{v\in \Wta|\ell_\tau(v)\leq k-2}  \tKC_{v}*\C(Y)_\tau. \] Moreover by Proposition~\ref{propBL_relations}, $\ell_\tau(h'*\tKC_{s_k})\leq k-1$. By eliminating the $i$ such that $\ell_\tau(v_i\tilde{v}_i)<k-1$ in \eqref{eqBL_commutation_order_k-1_ATC_temporary}, we deduce   \eqref{eqBL_commutation_order_k-1_ATC}.

Using Lemma~\ref{lemFormula_omega_tilde_vtau}, we deduce \eqref{eqBL_commutation_order_k-1_Itau}.

\end{proof}

\begin{Lemma}\label{lemLength_decreasing_multiplication_theta}
Let $x\in \Itg$, $\theta\in \JC_\tau$ and $w\in W^v$. Then $\ell_\tau(\theta.x)\leq \ell_\tau(x)-1$. In particular we have $\ord_\tau(x)\leq \ell_\tau(x)+1$ for all $x\in \Itg$.
\end{Lemma}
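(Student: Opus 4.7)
The plan is to reduce to basis elements of $\Itg$ and then read off the bound from the commutation formula of Lemma~\ref{lemComputation_terms ordern-1_commutation}. Since the family $(\tKC_w.\vb_\tau)_{w\in \Wta}$ is a basis of $\Itg$ (by the explicit description of $\psi:\tilde{M}\to\Itg$ in the proof of Lemma~\ref{lemAction_K_Itg}), I write $x=\sum_{w\in\Wta}a_w\tKC_w.\vb_\tau$, and it suffices to establish the bound $\ell_\tau(\theta.\tKC_w.\vb_\tau)\leq \ell_\tau(w)-1$ for a single $w\in\supp(x)$; the general bound then follows by linearity and by maximizing over the support.

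To establish this single-term bound, I fix a reduced expression $w=s_1\ldots s_k$ with $k=\ell_\tau(w)$ and invoke equation~\eqref{eqBL_commutation_order_k-1_Itau} of Lemma~\ref{lemComputation_terms ordern-1_commutation}. That formula writes $\theta.\tKC_w.\vb_\tau$ as $\tau(\theta)\tKC_w.\vb_\tau$, plus a linear combination of the $\tKC_{v_i\tilde{v}_i}.\vb_\tau$ (each of $\ell_\tau$-length exactly $k-1$), plus a residual $h.\vb_\tau$ with $\ell_\tau(h)\leq k-2$. Since $\theta\in\JC_\tau$ has $\tau(\theta)=0$, the leading term disappears and the remainder lies in the span of $\tKC_v.\vb_\tau$ for $v\in\Wta$ with $\ell_\tau(v)\leq k-1$, yielding the required inequality.

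The \emph{in particular} clause then follows by induction on $\ell_\tau(x)$. The base case $\ell_\tau(x)=0$ gives $x\in\C\vb_\tau$, and $\theta.x=\tau(\theta)x=0$ for every $\theta\in\JC_\tau$, so $\ord_\tau(x)\leq 1$. For the inductive step with $\ell_\tau(x)=k\geq 1$ and $\theta_1,\ldots,\theta_{k+1}\in\JC_\tau$, the first part gives $\ell_\tau(\theta_{k+1}.x)\leq k-1$, so the inductive hypothesis produces $\theta_1\ldots\theta_k.(\theta_{k+1}.x)=0$; since elements of $\C(Y)_\tau$ commute inside $\KCC_\tau$ (Proposition~\ref{propBL_relations}~\eqref{itAlgebra_K}), this rewrites as $\theta_1\ldots\theta_{k+1}.x=0$, so $\ord_\tau(x)\leq k+1=\ell_\tau(x)+1$.

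No substantial obstacle is expected: the entire commutation combinatorics is already packaged in Lemma~\ref{lemComputation_terms ordern-1_commutation}, and the only additional input is the observation that the vanishing of the leading coefficient $\tau(\theta)$ is exactly what forces the $\ell_\tau$-filtration to strictly decrease under multiplication by an element of $\JC_\tau$.
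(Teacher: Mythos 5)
Your proof is correct and takes essentially the same route as the paper, whose own proof simply cites equation~\eqref{eqBL_commutation_order_k-1_Itau}: you expand $x$ in the basis $(\tKC_w.\vb_\tau)_{w\in \Wta}$ and observe that the leading term $\tau(\theta)\tKC_w.\vb_\tau$ vanishes because $\tau(\theta)=0$ for $\theta\in\JC_\tau$ (the paper's displayed definition of $\JC_\tau$ has a typo; your reading $\tau(\theta)=0$ is the one consistent with Lemmas~\ref{lemLeft_multiplication_CY_AC} and~\ref{lemInjectivity_multiplication_theta}). The induction giving the \emph{in particular} clause is exactly what the paper leaves implicit, so nothing further is needed.
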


\begin{proof}
This follows from  \eqref{eqBL_commutation_order_k-1_Itau}. 
\end{proof}

\begin{Lemma}\label{lemEquality_length_order_simple_product}
We assume that there exists $\rho\in \C^*$ such that  \begin{equation}\label{eqDef_rho}\sigma''_{s,\tau}\in \rho \R^*_+,\end{equation} for all $s\in \SCC$. Let $w\in \Wta$. Then $\ord_\tau(\tKC_w.\vb_\tau)=\ell_\tau(w)+1$. 
 \end{Lemma}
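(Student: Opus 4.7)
The upper bound $\ord_\tau(\tKC_w.\vb_\tau) \le \ell_\tau(w)+1$ is immediate from Lemma~\ref{lemLength_decreasing_multiplication_theta}: applying $\ell_\tau(w)+1$ elements of $\JC_\tau$ successively drops $\ell_\tau$ strictly below $0$, hence forces the output to vanish. So the content of the lemma is the reverse inequality, which I would prove by induction on $k=\ell_\tau(w)$. The base case $k=0$ is trivial since $\tKC_1.\vb_\tau=\vb_\tau\ne 0$.

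For the inductive step, fix a reduced writing $w=s_1\ldots s_k$ and consider $\theta\in\JC_\tau$. Since $\tau(\theta)=0$, formula \eqref{eqBL_commutation_order_k-1_Itau} of Lemma~\ref{lemComputation_terms ordern-1_commutation} gives
\[
\theta.\tKC_w.\vb_\tau \;=\; \sum_{i\in E}\sigma''_{s_i,\tau}\,\alpha_{s_i}(v_i^{-1}.\theta)\,\tKC_{v_i\tilde v_i}.\vb_\tau \;+\; h.\vb_\tau,
\]
with $\ell_\tau(h)\le k-2$. The leading part in $\ell_\tau$-length (which is $\le k-1$) is therefore governed by the visible sum. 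I would then fix an element of the form $\theta=Z^\lambda-\tau(\lambda)$ with $\lambda\in Y$ chosen generic enough that $\alpha_{s_i}(v_i^{-1}.\lambda)>0$ for every $i\in E$; such $\lambda$ exists because the roots $v_i^{-1}.\alpha_{s_i}$, for $i\in E$, all lie in $\Phi^\vee_+$ by the standard subword characterization of the Bruhat order.

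The crucial input is hypothesis \eqref{eqDef_rho}: since every $\sigma''_{s_i,\tau}$ belongs to $\rho\,\R^*_+$, each coefficient in the displayed sum is a strictly positive real multiple of $\tau(\lambda)\rho$. Consequently, even after grouping over coincidences (different $i\in E$ possibly producing the same $v_i\tilde v_i$ via the strong exchange condition), no cancellation can occur, and the leading part of $\theta.\tKC_w.\vb_\tau$ is a nonzero linear combination of vectors $\tKC_u.\vb_\tau$ with $\ell_\tau(u)=k-1$.

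To conclude, I would iterate: choose $\lambda_1,\ldots,\lambda_k\in Y$ and set $\theta_j=Z^{\lambda_j}-\tau(\lambda_j)\in\JC_\tau$, with each $\lambda_j$ picked after the previous step so that the positivity of all coefficients is preserved. At each stage the $\ell_\tau$-leading part of $\theta_j\cdots\theta_k.\tKC_w.\vb_\tau$ remains a positive-real combination (times a power of $\rho$) of basis elements $\tKC_u.\vb_\tau$ of length exactly $k-(k-j+1)$, hence is nonzero; in particular $\theta_1\cdots\theta_k.\tKC_w.\vb_\tau\ne 0$, which gives $\ord_\tau(\tKC_w.\vb_\tau)\ge k+1$. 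The main obstacle I expect is the bookkeeping of this propagation, namely verifying that the positivity of the leading coefficients is stable under iterated multiplication despite the possible merging of summands across the $E$-sets of intermediate reduced expressions; this is precisely what hypothesis \eqref{eqDef_rho} is designed to absorb.
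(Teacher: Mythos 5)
Your proof is correct and takes essentially the same route as the paper's: the paper also fixes $\lambda\in C^v_f\cap Y$, sets $\theta=Z^\lambda-\tau(\lambda)$, and combines \eqref{eqBL_commutation_order_k-1_Itau} with the facts that $v_i.\alpha_{s_i}\in\Phi_+$ (so $(v_i.\alpha_{s_i})(\lambda)>0$) and hypothesis \eqref{eqDef_rho} to prevent cancellation, inducting on $\ell_\tau$ to show that the scalar $C_v$ defined by $\theta^{\ell_\tau(v)}.\tKC_v.\vb_\tau=C_v\vb_\tau$ lies in $(\tau(\lambda)\rho)^{\ell_\tau(v)}\R^*_+$. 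The only difference is bookkeeping --- the paper collapses each stage to a single scalar on a fixed ray rather than tracking the whole leading part, which disposes of your ``merging of summands'' concern automatically; also note the minor slip that the relevant objects are the positive \emph{roots} $v_i.\alpha_{s_i}\in\Phi_+$, not coroots.
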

 
 \begin{proof}
  Pick $\lambda\in C^v_f$ and set $\theta=Z^\lambda-\tau(\lambda)\in \JC_\tau$. For $v\in \Wta$, we define $C_v\in \C$ by \[\theta^{\ell_\tau(v)}.\vb=C_v.\vb_\tau,\] which is well defined by Lemma~\ref{lemLength_decreasing_multiplication_theta}. Fix a reduced writing $v=s_1\ldots s_k$ of $w$, with $k=\ell_\tau(w)$ and $s_1,\ldots,s_k\in \SCC_\tau$. We want to prove that $C_w\neq 0$. We assume that for all $v\in \Wta$ such that $\ell_\tau(v)<\ell_\tau(w)$, we have $C_v\in (\tau(\lambda)\rho)^{\ell_\tau(v)}\R^*_+$.  For $i\in \llbracket 1,k\rrbracket$, set $v_i=s_1\ldots s_{i-1}$, $\tilde{v}_i=s_{i+1}\ldots s_k$. Let $E=\{i\in \llbracket 1,k\rrbracket|\ell_\tau(v_i\tilde{v}_i)=k-1\}$. 
 Then by Lemma~\ref{lemComputation_terms ordern-1_commutation} and \eqref{eqFormula_tau_Omega_s}, we have \[\begin{aligned}C_w &=\sum_{i\in E} \tau\big(\tilde{\Omega}_{s_i}(^{v_i^{-1}}\theta)\big)C_{v_i\tilde{v}_i}\\
&= \tau(\lambda)\sum_{i\in E}\alpha_{s_i}(\lambda) \sigma_{s_i,\tau}'' v_i.\alpha_{s_i}(\lambda) C_{v_i\tilde{v}_i}.  \end{aligned} \] If $i\in E$, we have $\ell(v_is_i)>\ell(v_i)$ and thus by \cite[1.3.13 Lemma]{kumar2002kac}, $v_i.\alpha_{s_i}\in \Phi_+$. Therefore $v_i.\alpha_{s_i}(\lambda)\in \Z_{>0}$. We deduce that $C_w\in (\tau(\lambda)\rho)^{k}\R^*_+$. In particular, $C_w\neq 0$ and $\ord_\tau(\tKC_w .\vb_\tau)\geq \ell_\tau(w)+1$. By Lemma~\ref{lemLength_decreasing_multiplication_theta} we deduce that  $\ord_\tau(\tKC_w .\vb_\tau)=\ell_\tau(w)+1$.

 \end{proof}

\begin{Lemma}\label{lemEasy_inequality_multiplication_KC_s}
Let $x\in \Itg$ and $s\in \SCC_\tau$.  Then $\ord_\tau(\tKC_s.x)\leq \ord_\tau(x)+1$.
\end{Lemma}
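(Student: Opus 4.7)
The plan is to iterate the Bernstein--Lusztig commutation relation from Proposition~\ref{propBL_relations}\eqref{itBL_relations} to move every $\theta_i$ past $\tKC_s$, and then check that each resulting summand kills $x$.

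Set $k=\ord_\tau(x)$ and let $\theta_1,\ldots,\theta_{k+1}\in\JC_\tau$ be arbitrary; we aim to prove $\theta_1\cdots\theta_{k+1}\,\tKC_s.x=0$. First, by a straightforward induction on $n$ from the one-step identity $\theta*\tKC_s=\tKC_s*{^s\theta}+\tilde{\Omega}_s(\theta)$, one obtains in $\KCC_\tau$
\[
\theta_1\cdots\theta_n*\tKC_s \;=\; \tKC_s*{^s\theta_1}\cdots{^s\theta_n}\;+\;\sum_{i=1}^{n}\theta_1\cdots\theta_{i-1}\,\tilde{\Omega}_s(\theta_i)\,{^s\theta_{i+1}}\cdots{^s\theta_n}.
\]

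Next, since $s\in\SCC_\tau\subset\Wta\subset W_\tau$, we have $\tau\circ s=\tau$, so $\tau({^s\theta_j})=\tau(\theta_j)=0$ and hence ${^s\theta_j}\in\JC_\tau$ for every $j$. Applying the above identity with $n=k+1$ and acting on $x$, the leading term becomes $\tKC_s.\bigl({^s\theta_1}\cdots{^s\theta_{k+1}}.x\bigr)$, which vanishes because it involves the action of $k+1>k=\ord_\tau(x)$ elements of $\JC_\tau$ on $x$.

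It remains to handle each cross term. Since $\tilde{\Omega}_s\bigl(\C(Y)_\tau\bigr)\subset\C(Y)_\tau$, we may decompose
\[
\tilde{\Omega}_s(\theta_i)=c_i+\phi_i,\qquad c_i:=\tau\bigl(\tilde{\Omega}_s(\theta_i)\bigr)\in\C,\qquad \phi_i:=\tilde{\Omega}_s(\theta_i)-c_i\in\JC_\tau.
\]
The $\phi_i$-contribution then involves $k+1$ factors of $\JC_\tau$ acting on $x$ and vanishes. The $c_i$-contribution is the scalar $c_i$ multiplied by a product of exactly $(i-1)+(k+1-i)=k$ elements of $\JC_\tau$ acting on $x$, which also vanishes by the very definition of $\ord_\tau(x)=k$. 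Summing gives $\theta_1\cdots\theta_{k+1}\,\tKC_s.x=0$, hence $\ord_\tau(\tKC_s.x)\le k+1$.

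The argument is pure algebraic bookkeeping; there is no genuine obstacle. The only point requiring some care is the splitting of $\tilde{\Omega}_s(\theta_i)$ into a scalar and a $\JC_\tau$-part, which relies on $\tilde{\Omega}_s$ preserving $\C(Y)_\tau$ (recorded just before Proposition~\ref{propBL_relations}), together with the fact that $\C(Y)_\tau$ is commutative so that counting factors in $\JC_\tau$ is unambiguous under the $\KCC_\tau$-action on $\Itg$ provided by Lemma~\ref{lemAction_K_Itg}.
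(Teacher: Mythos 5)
Your argument is correct and rests on exactly the same engine as the paper's proof, namely the commutation relation of Proposition~\ref{propBL_relations}~\eqref{itBL_relations} together with the facts that ${^s\theta}\in\JC_\tau$ for $\theta\in\JC_\tau$ (since $s\in W_\tau$) and that multiplying by elements of $\C(Y)_\tau$ cannot increase $\ord_\tau$. The only cosmetic difference is that the paper packages the computation as an induction on $\ord_\tau(x)$, applying the one-step relation $\theta.\tKC_s.x=\tKC_s.{^s\theta}.x+\tilde{\Omega}_s(\theta).x$ once per step, whereas you unroll it into the explicit $(k+1)$-fold formula and count factors of $\JC_\tau$ directly (your splitting $\tilde{\Omega}_s(\theta_i)=c_i+\phi_i$ plays the role of the paper's remark that $\ord_\tau(\tilde{\Omega}_s(\theta).x)\leq\ord_\tau(x)$).
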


\begin{proof}
We prove it by induction on $\ord_\tau(x)$. If $\ord_\tau(x)=0$, then $x=0$ and it is clear.  We assume that $\ord_\tau(x)>0$ and that  for all $y\in \Itg$ such that $\ord_\tau(y)<\ord_\tau(x)$, we have $\ord_\tau(\tKC_s.y)\leq \ord_\tau(y)+1$. Let $\theta\in\JC_\tau$. By Proposition~\ref{propBL_relations}~\eqref{itBL_relations}, we have \[\theta.\tKC_s.x=\tKC_s.{^s\theta}.x+\tilde{\Omega}_s(\theta).x,\] for $\theta\in \JC_\tau$. Moreover $\ord_\tau({^s\theta}.x)< \ord_\tau(x)$ and  thus $\ord_\tau(\tKC_s.{^s\theta}.x)\leq \ord_\tau({^s\theta}.x)+1\leq_\tau \ord_\tau(x)$ by assumption. Moreover,  $\ord_\tau\left(\tilde{\Omega}_s(\theta).x\right)\leq \ord_\tau(x)$, therefore $\ord_\tau(\theta.\tKC_s.x)\leq \ord_\tau(x)$ for every $\theta\in \JC_\tau$  and thus $\ord_\tau(\tKC_s.x)\leq\ord_\tau(x)+1$. Lemma follows.

\end{proof}

Let  $x\in \Itg\setminus\{0\}$. Write $x=\sum_{w\in \Wta}a_w\tKC_w.\vb_\tau$. Let $M=\{w\in \supp(x)|\ell_\tau(w)=\ell_\tau(x)\}$, $\NC_\tau=|M|$ and $\LTI(x)=\sum_{w\in M} a_w \tKC_w.\vb_\tau$.

\begin{Theorem}\label{thmWeight_space}
Let $\tau\in \UC_\C$. We assume  that \eqref{eqDef_rho} is satisfied and that $|\sigma_s|,|\sigma_s'|>1$, for all $s\in \SCC_\tau$.  Let $x\in \Itg\setminus\{0\}$. Then $\ord_\tau(x)=\ell_\tau(x)+1$. In particular, $\Itg\cap I_\tau(\tau)=\C \vb_\tau$ and \cite[Conjecture 5.16]{hebert2021decompositions} is true.
\end{Theorem}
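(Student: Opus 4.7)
By Lemma~\ref{lemLength_decreasing_multiplication_theta} we already have $\ord_\tau(x)\leq \ell_\tau(x)+1$, so it suffices to establish the reverse inequality. I would proceed by induction on $k := \ell_\tau(x)$. For $k=0$, $x\in\C^*\vb_\tau$, and every $\theta\in\JC_\tau$ satisfies $\theta.x=\tau(\theta)x=0$ while $x\neq 0$, so $\ord_\tau(x)=1$. For the inductive step it is enough to exhibit a single $\theta\in\JC_\tau$ with $\ell_\tau(\theta.x)=k-1$: the induction hypothesis applied to $\theta.x$ then supplies $\theta_2,\ldots,\theta_k\in\JC_\tau$ with $\theta_2\cdots\theta_k.\theta.x\neq 0$, giving $\ord_\tau(x)\geq k+1$.

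\paragraph{Reduction to the graded piece.}
Take $\theta = Z^\lambda - \tau(\lambda)$ for a $\lambda\in C^v_f\cap Y$ to be chosen below, and set $F_j\Itg := \{y\in\Itg : \ell_\tau(y)\leq j\}$. Lemmas~\ref{lemComputation_terms ordern-1_commutation} and~\ref{lemLength_decreasing_multiplication_theta} together show that multiplication by $\theta$ descends to a well-defined linear map
\begin{equation*}
\bar\theta_\lambda : F_k\Itg/F_{k-1}\Itg \longrightarrow F_{k-1}\Itg/F_{k-2}\Itg, \qquad \tKC_w \longmapsto \tau(\lambda)\sum_{i\in E(w)}\sigma_{s_i,\tau}''\,(v_i.\alpha_{s_i})(\lambda)\,\tKC_{v_i\tilde{v}_i}.
\end{equation*}
The inductive step thus reduces to proving: for every nonzero $\xi=\sum_{w\in M}a_w\tKC_w$ in $F_k\Itg/F_{k-1}\Itg$ (where $M\subseteq\{w\in\Wta:\ell_\tau(w)=k\}$), some $\lambda\in C^v_f\cap Y$ yields $\bar\theta_\lambda(\xi)\neq 0$. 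In fact it is enough to show the iterated scalar
\begin{equation*}
\theta^k.x \;=\; (\tau(\lambda)\rho)^k\Bigl(\sum_{w\in M}a_w R_w(\lambda)\Bigr)\vb_\tau
\end{equation*}
does not vanish for some $\lambda$, where each $R_w$ is the polynomial in $\lambda$ defined by the recursion $R_w=\sum_{i\in E(w)}(\sigma_{s_i,\tau}''/\rho)(v_i.\alpha_{s_i})R_{v_i\tilde{v}_i}$ extracted from the proof of Lemma~\ref{lemEquality_length_order_simple_product}, and by that same lemma is strictly positive on $C^v_f\cap Y$.

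\paragraph{Main difficulty and conclusion.}
The technical core is thus to establish that $\{R_w : w\in\Wta,\ \ell_\tau(w)=k\}$ is $\C$-linearly independent as a family of polynomial functions on $Y\otimes\C$. I would approach this by a leading-monomial argument driven by the Coxeter combinatorics of $(\Wta,\SCC_\tau)$: for a fixed reduced expression $w=s_1\ldots s_k$, the rightmost descent chain $w\to ws_k\to ws_ks_{k-1}\to\ldots\to 1$ contributes the monomial $\prod_{j=1}^k(v_j.\alpha_{s_j})(\lambda)$ to $R_w$ with a strictly positive coefficient, and the fact from Lemma~\ref{lemBasis_root_subsystem} and Proposition~\ref{propBL_relations}(4) that $\Sigma_\tau$ is a genuine Kac-Moody basis of the closed subroot system $\Phi^\vee_{(\tau)}$ should, via a suitable monomial order on the symmetric algebra of $Y\otimes\C$, allow one to separate these top monomials across distinct $w$'s of the common length $k$. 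Once the linear independence is secured, $\sum_w a_w R_w\not\equiv 0$, so $\theta^k.x\neq 0$ for a suitable $\lambda$ and the induction closes. The main obstacle is this linear-independence claim, whose proof hinges on delicate handling of the recursion defining $R_w$. Finally, specializing to $x\in\Itg\cap I_\tau(\tau)$ forces $\ord_\tau(x)\leq 1$, hence $\ell_\tau(x)\leq 0$ and $x\in\C\vb_\tau$, yielding $\Itg\cap I_\tau(\tau)=\C\vb_\tau$ and hence \cite[Conjecture~5.16]{hebert2021decompositions}.
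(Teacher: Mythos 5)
Your reduction is sound as far as it goes: the upper bound $\ord_\tau(x)\leq\ell_\tau(x)+1$ is Lemma~\ref{lemLength_decreasing_multiplication_theta}, the lower-order terms $h.\vb_\tau$ with $\ell_\tau(h)\leq k-2$ produced by Lemma~\ref{lemComputation_terms ordern-1_commutation} are annihilated by the remaining $k-1$ factors of $\theta$, and so $\theta^k.x$ is indeed computed by iterating the graded map $\bar\theta_\lambda$, yielding the scalar $(\tau(\lambda)\rho)^k\sum_{w}a_wR_w(\lambda)$. But the step you yourself flag as the main difficulty --- linear independence of the family $\{R_w:\ell_\tau(w)=k\}$ of polynomial functions --- is essentially the whole content of the theorem in disguise, and your sketch does not establish it. The polynomial $R_w$ is a sum over \emph{all} saturated Bruhat chains from $w$ down to $1$, each contributing a product of $k$ positive coroots of $\Phi^\vee_{(\tau)}$ that need not be the inversion multiset of $w$; to isolate the monomial $\prod_j(v_j.\alpha_{s_j})$ coming from one reduced-word chain you would need a term order under which it dominates every other contribution to every $R_{w'}$, $w'\neq w$, and no such order is exhibited (nor is it clear one exists for an arbitrary, possibly infinite, basis $\Sigma_\tau$, with the constants $\sigma''_{s,\tau}/\rho$ varying with $s$). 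Positivity of each $R_w$ on $C^v_f$ (Lemma~\ref{lemEquality_length_order_simple_product}) is of no help once the $a_w$ have mixed signs. So the proposal has a genuine gap at its central step.

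The paper takes a different route that avoids this question entirely: a double induction on the pair $\big(\ell_\tau(x),\NC_\tau(x)\big)$, where $\NC_\tau(x)$ is the number of terms of maximal length. If all $w\in\supp\big(\LTI(x)\big)$ admit a common left descent $s\in\SCC_\tau$, one factors $x=\tKC_s.x'+x''$ and checks, via the Bernstein--Lusztig relation and a support-disjointness argument, that $\ell_\tau(\theta.x)=\ell_\tau(x)-1$ exactly for $\theta=Z^\lambda-\tau(\lambda)$ with $\alpha_s(\lambda)=1$, so the induction hypothesis applies to $\theta.x$; otherwise one multiplies $x$ by a well-chosen $\tKC_s$, which raises $\ell_\tau$ by one but strictly decreases $\NC_\tau$, and Lemma~\ref{lemEasy_inequality_multiplication_KC_s} transfers the conclusion for $\tKC_s.x$ back to $x$. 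To salvage your approach you would have to actually prove the independence of the $R_w$ (say via the nil-Hecke/Schubert duality underlying degree polynomials), which is a substantial undertaking; the paper's two-case induction is the cheaper and self-contained route.
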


\begin{proof}
If $\NC_\tau(x)=1$, this is Lemma~\ref{lemEquality_length_order_simple_product} and if $\ell_\tau(x)=0$, this is clear. We now assume that $\NC_\tau(x)\geq 2$ and that for all $y\in \Itg\setminus\{0\}$ such that $\ell_\tau(y)<\ell_\tau(x)$ or $\NC_\tau(y)<\NC_\tau(x)$, we have $\ord_\tau(y)=\ell_\tau(y)+1$. Let $M=\supp\big(\LTI(x)\big)$. There are two cases:\begin{enumerate}
\item there exists $s\in \SCC$ such that for all $w\in M$, $sw<_{\tau} w$,

\item  for all $s\in \SCC$, there exists $w\in M$ such that $sw>_\tau w$.
\end{enumerate}

We first assume (1). Then we can write\[x=\tKC_s.x'+x'',\] with $\ell_\tau(x')=\ell_\tau(x)-1$, $\ell_\tau(x'')\leq \ell_\tau(x')$ and $sv>_\tau v$ for all $v\in \supp(x')$. Let $\theta\in \JC_\tau$. Then \begin{equation}\label{eqTheta_KC} \theta.\tKC_s.x=\tKC_s*{^{s}\theta}.x'+\tilde{\Omega}_s(\theta).x'+\theta.x''.\end{equation}

By Lemma~\ref{lemLength_decreasing_multiplication_theta}, we have $\ell_\tau({^s\theta}.x') \leq \ell_\tau(x')-1$. For all $v\in \supp({^s\theta}.x')$, if $sv<_\tau v$, then $\ell_\tau(\tKC_s*\tKC_v.\vb_\tau)\leq \ell_\tau(v)\leq \ell_\tau({^s\theta}.x')\leq \ell_\tau(x')-1$ by Proposition~\ref{propBL_relations}~\eqref{itHecke_relations}. Therefore for all $v\in \supp({\tKC_s*^s\theta}.x)$ such that $\ell_\tau(v)=\ell_\tau(x')$, we have $sv<_\tau v$. Therefore  \begin{equation}\label{eqSupport}\supp\big(\LTI(\tKC_s*{^s\theta}*x')\big)\cap \supp\big(\LTI(x')\big)=\emptyset.\end{equation} Take $\lambda\in Y$ such that $\alpha_s(\lambda)=1$ and set $\theta=Z^\lambda-\tau(\lambda)\in \JC_\tau$. Then by Lemma~\ref{lemFormula_omega_tilde_vtau}, $\tau\big(\tilde{\Omega_s}(\theta)\big)=\tau(\lambda)\sigma_{s,\tau}''\alpha_s(\lambda)=\tau(\lambda)\sigma_{s,\tau}''$ and thus by combining \eqref{eqTheta_KC} and \eqref{eqSupport}, we obtain that $\ell_\tau(\theta.\tKC_s.x)=\ell_\tau(x')$. By assumption we deduce that $\ord_\tau(\theta.\tKC_s.x)=\ell_\tau(x')+1$. Consequently $\ord_\tau(\tKC_s.x)\geq \ell_\tau(x)+1$ and by Lemma~\ref{lemLength_decreasing_multiplication_theta} we deduce that $\ord_\tau(\tKC_s.x)=\ell_\tau(x)+1$.

We now assume that we are in the case (2). Let $w\in \supp\big((\LTI(x)\big)$ and $s\in \SCC$ be such that $sw<_\tau w$. Set $x'= \tKC_s.x$. Then by assumption we have $\ell_\tau(x')=\ell_\tau(x)+1$ and  thus \[\supp\big(\LTI(x')\big)\subset s.\supp\left(\LTI(x)\right)\setminus \{sw\}.\] In particular $\NC_\tau(x')<\NC_\tau(x)$. By our induction assumption we deduce that $\ord_\tau(x')=\ell_\tau(x')+1=\ell_\tau(x)+2$. By Lemma~\ref{lemEasy_inequality_multiplication_KC_s} we have that $\ord_\tau(x)\geq \ord_\tau(x')-1\geq \ell_\tau(x)+1$. By Lemma~\ref{lemLength_decreasing_multiplication_theta} we deduce that $\ell_\tau(x)+1=\ord_\tau(x)$. This completes the proof of the theorem.

\end{proof}

Note that the above proof can be simplified in the case where $\Wta$ is finite. In this case, denote by $w_0$, the maximal element of $\Wta$ (for the Bruhat order $\leq_\tau$). Let $x\in \Itg\setminus\{0\}$. Let $w\in \supp\big(\LTI(x)\big)$. Set $y=\tKC_{w_0 w^{-1}}.x$. Then $\supp(\tKC_{w_0w^{-1}}.x)\ni w_0$. By Lemma~\ref{lemEquality_length_order_simple_product}, $\ord_\tau(\tKC_{w_{0}w^{-1}}.x)=\ell_\tau(w_0)+1$. By Lemma~\ref{lemEasy_inequality_multiplication_KC_s}, we deduce that \[\ord_\tau (x)\geq \ord_\tau(y)-\ell(w_0w^{-1})=\ell_\tau(w_0)+1-\big(\ell(w_0)-\ell(w^{-1})\big)=\ell(w)+1.\] By Lemma~\ref{lemLength_decreasing_multiplication_theta}, we deduce that $\ord_\tau(x)=\ell_\tau(x)+1$.

\medskip

We obtain  a version of the Knapp-Stein dimension theorem in our frameworks:
\begin{Corollary}\label{corSilberger_dimension_theorem}(see \cite{silberger1978knapp})
Let $\tau\in \UC_\C$. Then \[I_\tau(\tau)=\bigoplus_{w_R\in R_\tau} \C F_{w_R}.\vb_\tau.\] If moreover $\sigma_s=\sigma_s'$ for all $s\in \SCC$, then $\End_{\HC-\mathrm{mod}}(I_\tau)\simeq \C[R_\tau]$.
\end{Corollary}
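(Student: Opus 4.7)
The plan is to deduce the corollary as an essentially formal consequence of Theorem~\ref{thmWeight_space} combined with the decomposition~\eqref{eqHeb21_5.12} which was established in \cite{hebert2021decompositions}. Both assertions follow once we transport the space identification $\Upsilon : I_\tau(\tau) \xrightarrow{\sim} \End(I_\tau)$ through the description of $I_\tau(\tau,\mathrm{gen})$ in terms of the operators $\psi_{w_R}$.

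For the first assertion, I would start from \eqref{eqHeb21_5.12}, namely
\[
I_\tau(\tau,\mathrm{gen}) = \bigoplus_{w_R \in R_\tau} \psi_{w_R}\bigl(\Itg\bigr).
\]
Since each $\psi_{w_R}$ is an $\AC_\C$-module endomorphism of $I_\tau$, it sends $I_\tau(\tau)$ into itself; moreover, its image lies in $\psi_{w_R}(\Itg)$ by construction, so the direct-sum decomposition restricts to
\[
I_\tau(\tau) = \bigoplus_{w_R \in R_\tau} \psi_{w_R}\bigl(\Itg \cap I_\tau(\tau)\bigr).
\]
Applying Theorem~\ref{thmWeight_space} gives $\Itg \cap I_\tau(\tau) = \C\vb_\tau$, and since $\psi_{w_R}(\vb_\tau) = F_{w_R}(\tau).\vb_\tau$ by the definition of $\psi_{w_R}$, the first assertion follows.

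For the endomorphism-algebra statement under the hypothesis $\sigma_s = \sigma_s'$ for every $s \in \SCC$, the plan is to transport the multiplicative structure of $\End(I_\tau)$ along the isomorphism $\Upsilon$. Recall that $\Upsilon_x \circ \Upsilon_y$ acts on $\vb_\tau$ as $\Upsilon_x(y)$, so composition of endomorphisms corresponds, via the first assertion, to the pairing $(w_R, w_R') \mapsto \psi_{w_R}(F_{w_R'}(\tau).\vb_\tau) = F_{w_R'}(\tau) \cdot F_{w_R}(\tau).\vb_\tau$ (using that each $F_{w_R}(\tau) \in \HCW \subset \AC_\C$ acts via the module structure). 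The main task is therefore to show that for $w_R, w_R' \in R_\tau$,
\[
F_{w_R'}(\tau) \cdot F_{w_R}(\tau).\vb_\tau \;=\; c(w_R, w_R')\, F_{w_R' w_R}(\tau).\vb_\tau
\]
for some scalar $c(w_R, w_R')$, and that under the equal-parameter hypothesis $c \equiv 1$. This uses Lemma~\ref{lemReeder 4.3}\eqref{itCommutation_relation} together with the multiplicative behaviour $F_w F_{w'} = F_{ww'}$ when the lengths of $w, w'$ add appropriately; the length-additivity issue is handled by the fact that $R_\tau$ acts on $\Phi^\vee_{(\tau),+}$ preserving positivity, which forces the relevant products to remain reduced modulo the subgroup $\Wta$.

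The main obstacle I anticipate is purely technical rather than conceptual: checking that the cocycle $c$ is trivial under the assumption $\sigma_s = \sigma_s'$ (so that $Q_s^T = (\sigma_s^2-1)/(1 - Z^{-\alpha_s^\vee})$ has a particularly clean form). In general one expects a twisted group algebra $\C[R_\tau, c]$, and it is only the equal-parameter assumption that collapses $c$ to $1$. Once this is verified, the map $w_R \mapsto \psi_{w_R}$ extends linearly to an algebra isomorphism $\C[R_\tau] \xrightarrow{\sim} \End(I_\tau)$, completing the proof.
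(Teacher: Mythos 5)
Your overall route is the same as the paper's, which disposes of this corollary in one line: it follows from Theorem~\ref{thmWeight_space} combined with \cite[Proposition 5.13 (2) and Proposition 5.27]{hebert2021decompositions}. What you are doing is reconstructing those two cited propositions rather than invoking them. For the first assertion your reconstruction is essentially right: the identity $I_\tau(\tau)=\bigoplus_{w_R}\psi_{w_R}\bigl(\Itg\cap I_\tau(\tau)\bigr)$ is exactly \cite[Proposition 5.13 (2)]{hebert2021decompositions}, and combining it with $\Itg\cap I_\tau(\tau)=\C\vb_\tau$ gives the claim. Your justification of the restriction step is too quick, though: to pass from $x=\sum_{w_R}\psi_{w_R}(x_{w_R})\in I_\tau(\tau)$ to $x_{w_R}\in I_\tau(\tau)$ you need (a) that $\Itg$ is stable under $\C[Y]$ (true, since $\theta*F_w=F_w*{^{w^{-1}}\theta}$), so that $(\theta-\tau(\theta)).x_{w_R}\in\Itg$ and directness of the sum applies, and (b) injectivity of $\psi_{w_R}$ on $\Itg$ to conclude $(\theta-\tau(\theta)).x_{w_R}=0$; neither point is covered by the two facts you list.

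The second assertion is where the genuine gap lies. You correctly identify that everything reduces to showing $F_{w_R'}(\tau).F_{w_R}(\tau).\vb_\tau=c(w_R,w_R')F_{w_R'w_R}(\tau).\vb_\tau$ with $c\equiv 1$ under the equal-parameter hypothesis, and you correctly anticipate that in general one only gets a twisted group algebra (the paper's introduction notes that the corollary can fail for almost-split groups, consistent with a nontrivial cocycle). But you do not carry out this computation; you defer it as a ``technical obstacle.'' That computation \emph{is} the content of \cite[Proposition 5.27]{hebert2021decompositions} and is the only nontrivial input beyond Theorem~\ref{thmWeight_space}, so as written the proof of the second statement is incomplete. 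One concrete pitfall to watch if you do carry it out: $\ev_\tau$ is a morphism of $\HFW$--$\C(Y)_\tau$-bimodules but \emph{not} an algebra morphism, so $F_{w_R'}(\tau)*F_{w_R}(\tau)$ and $(F_{w_R'}*F_{w_R})(\tau)$ cannot be identified without an argument; the comparison goes through because the product is applied to $\vb_\tau$ and $w_R,w_R'$ fix $\tau$, but this is precisely where the parameters $\sigma_s,\sigma_s'$ enter and where the cocycle is produced.
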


\begin{proof}
This follows from \cite[Proposition 5.13 (2) and Proposition 5.27]{hebert2021decompositions}.
\end{proof}

Note that under the notation of \cite{hebert2021decompositions}, Theorem~\ref{thmWeight_space} implies that $I_\tau(\tau,\Wta)=\C \vb_\tau$. We can thus apply the results from 5.3 to 5.6 of \cite{hebert2021decompositions}, when $\tau\in \UC_\C$.  In particular, we have a description of the submodules and the irreducible quotients of $I_\tau$ when $\tau\in \UC_\C$, see \cite[Theorem 5.38]{hebert2021decompositions}. 

\medskip

We also obtain Kato's irreducibility criterion:
\begin{Corollary}\label{corKatos_irreducibility_criterion}(see \cite[Theorem 2.4]{kato1982irreducibility})

Let $\tau\in T_\C$. Then $I_\tau$ is irreducible if and only if:\begin{enumerate}
\item $\tau\in \UC_\C$,

\item $\Wta=\{1\}$.
\end{enumerate}

\end{Corollary}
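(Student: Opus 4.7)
The plan is to derive this corollary by combining three ingredients already in place: the equivalence \textit{irreducibility $\Leftrightarrow$ triviality of the weight space at $\tau$} (from \cite[Theorem~4.8]{hebert2018principal}), the decomposition of $I_\tau(\tau,\mathrm{gen})$ in terms of the $R$-group (equation~\eqref{eqHeb21_5.12}), and the newly proved Theorem~\ref{thmWeight_space}. Concretely, I first deal with the case $\tau \notin \UC_\C$: here \cite[Theorem~3]{hebert2018principal} already provides the construction of a proper non-trivial submodule of $I_\tau$, so condition (1) is necessary.

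Assume henceforth that $\tau \in \UC_\C$. Then \cite[Theorem~4.8]{hebert2018principal} gives that $I_\tau$ is irreducible if and only if $I_\tau(\tau) = \C\vb_\tau$, so the whole question reduces to a dimension computation for the $\tau$-weight space. Using \eqref{eqHeb21_5.12} together with the fact that each $\psi_{w_R}$ stabilizes $I_\tau(\tau)$ (because $F_{w_R}(\tau).\vb_\tau \in I_\tau(\tau)$ by \cite[Lemma~5.7]{hebert2021decompositions}), intersecting with $I_\tau(\tau)$ yields
\[
I_\tau(\tau) \;=\; \bigoplus_{w_R \in R_\tau} \psi_{w_R}\bigl(I_\tau(\tau) \cap \Itg\bigr).
\]
Now I would invoke Theorem~\ref{thmWeight_space}, which states $\Itg \cap I_\tau(\tau) = \C\vb_\tau$. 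Substituting gives
\[
I_\tau(\tau) \;=\; \bigoplus_{w_R \in R_\tau} \C \, F_{w_R}(\tau).\vb_\tau,
\]
so $\dim_\C I_\tau(\tau) = |R_\tau|$, and $I_\tau(\tau) = \C\vb_\tau$ if and only if $R_\tau = \{1\}$, equivalently $W_\tau = \Wta$ (which, combined with condition (1), recovers the criterion as stated in the corollary).

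The main obstacle has already been overcome: Theorem~\ref{thmWeight_space} is the genuinely new input, and its proof (through the interplay between $\ord_\tau$ and $\ell_\tau$ via the Bernstein--Lusztig commutation in $\KCC_\tau$) is the technical heart of Section~\ref{secKato_s_irreducibility_criterion}. Once it is available, the deduction of Kato's criterion amounts to a clean assembly of prior results; no further computation is required beyond verifying that the endomorphisms $\psi_{w_R}$ indeed restrict to endomorphisms of $I_\tau(\tau)$ and that they remain $\C$-linearly independent when evaluated on $\vb_\tau$, both of which are contained in \cite[Proposition~5.13]{hebert2021decompositions}.
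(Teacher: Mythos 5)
Your proposal is correct and follows essentially the same route as the paper: reduce irreducibility to $I_\tau(\tau)=\C\vb_\tau$ via \cite[Theorem 4.8]{hebert2018principal}, then compute $\dim_\C I_\tau(\tau)=|R_\tau|$ from the $R_\tau$-decomposition of the generalized weight space together with Theorem~\ref{thmWeight_space}; the paper merely packages this dimension count as Corollary~\ref{corSilberger_dimension_theorem} (citing \cite[Proposition 5.13 (2) and Proposition 5.27]{hebert2021decompositions}) before invoking it. Note that both your argument and the paper's own proof establish the criterion with condition (2) read as $W_\tau=\Wta$, which is the form given in the introduction.
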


\begin{proof}
By \cite[Proposition 4.17 and Theorem 4.8]{hebert2018principal}, if $I_\tau$ is irreducible, then $W_\tau=\Wta$ and $\tau\in \UC_\C$. Conversely, let $\tau\in \UC_\C$ be such that $\Wta=W_\tau$. Then $R_\tau=W_\tau/\Wta=\{1\}$ and by Corollary~\ref{corSilberger_dimension_theorem}, $I_\tau(\tau)=\C \vb_\tau$. By \cite[Theorem 4.8]{hebert2018principal} we deduce that $I_\tau$ is irreducible.
\end{proof}

\printindex

\bibliography{/home/auguste_pro/Documents/Projets/bibliographie.bib}
\bibliographystyle{plain}

\bigskip
\par\noindent Universit\'e de Lorraine, CNRS, Institut Élie Cartan de Lorraine, F-54000 Nancy, France, UMR 7502

E-mail: auguste.hebert@univ-lorraine.fr.

\end{document}